\newcommand{\field}[1]{\mathbf #1}
\newcommand{\mf}[1]{\mathfrak #1}
\newcommand{\mc}[1]{\mathcal #1}
\newcommand{\ms}[1]{\mathscr #1}
\newcommand{\widebar}[1]{\overline{#1}}
\newcommand{\R}{\field R}
\newcommand{\LL}{\field L}
\newcommand{\C}{\field C}
\newcommand{\F}{\field F}
\newcommand{\Z}{\field Z}
\newcommand{\V}{\field V}
\newcommand{\simto}{\stackrel{\sim}{\to}}
\renewcommand{\phi}{\varphi}
\newcommand{\unif}{\text{\rm unif}}
\newcommand{\sm}{\text{\rm sm}}
\renewcommand{\hom}{\operatorname{Hom}}
\newcommand{\shom}{\ms H\!om}
\newcommand{\rshom}{\mathbf{R}\shom}
\newcommand{\send}{\ms E\!nd}
\newcommand{\spec}{\operatorname{Spec}}
\newcommand{\Spec}{\operatorname{Spec}}
\renewcommand{\P}{\field P}
\DeclareMathOperator{\Pic}{Pic}
\DeclareMathOperator{\supp}{Supp}
\DeclareMathOperator{\pr}{pr}
\newcommand{\m}{\boldsymbol{\mu}}
\newcommand{\G}{\field G} %for the multiplicative and additive groups
\DeclareMathOperator{\Def}{Def}
\renewcommand{\H}{\operatorname{H}}
\newcommand{\GL}{\operatorname{GL}}
\DeclareMathOperator{\ext}{\operatorname{Ext}}
\newcommand{\Gal}{\operatorname{Gal}}
\DeclareMathOperator{\per}{per}
\DeclareMathOperator{\ind}{ind}
\renewcommand{\)}{)\!\hspace{0.03em})}
\DeclareMathOperator*{\tensor}{\otimes}
\DeclareMathOperator{\Tr}{\operatorname{Tr}}
\DeclareMathOperator{\adj}{\operatorname{adj}}
\newcommand{\surj}{\twoheadrightarrow}
\newcommand{\inj}{\hookrightarrow}
\DeclareMathOperator{\coker}{\operatorname{coker}}
\DeclareMathOperator{\End}{\operatorname{End}}
\DeclareMathOperator{\aut}{\operatorname{Aut}}
\DeclareMathOperator{\Aut}{\operatorname{Aut}}
\DeclareMathOperator{\isom}{\operatorname{Isom}}
\DeclareMathOperator{\M}{\operatorname{M}}
\DeclareMathOperator{\Br}{\operatorname{Br}}
\DeclareMathOperator{\Bl}{\operatorname{Bl}}
\DeclareMathOperator{\Sing}{Sing}
\DeclareMathOperator{\B}{\operatorname{\mathsf B\!}}
\newtheorem{lem}{Lemma}[subsubsection]
\renewcommand{\thelem}{\ifnum\value{subsubsection}>0{\thesubsubsection.\arabic{lem}}\else{\ifnum\value{subsection}>0{\thesubsection.\arabic{lem}}\else{\thesection.\arabic{lem}}\fi}\fi}
\newtheorem{thm}[lem]{Theorem}
\newtheorem{prop}[lem]{Proposition}
\newtheorem{cor}[lem]{Corollary}
\newtheorem*{claim}{Claim}
\theoremstyle{definition}
\newtheorem{defn}[lem]{Definition}
\newtheorem{example}[lem]{Example}
\newtheorem{hyp}[lem]{Hypothesis}
\newtheorem{para}[lem]{}%\indent}
\newtheorem{construction}[lem]{Construction}
\newtheorem{notn}[lem]{Notation}
\newtheorem{notation}[lem]{Notation}
\newtheorem{goal}[lem]{Goal}
\theoremstyle{remark}
\newtheorem{remark}[lem]{Remark}
\numberwithin{equation}{lem}
\author{Max Lieblich}
\address{Padelford Hall, Seattle}
\email{lieblich@math.washington.edu}
\title{The period-index problem for fields of transcendence degree $2$}
\begin{document}
\begin{abstract}
  Using geometric methods we prove the standard period-index
  conjecture for the Brauer group of a field of transcendence
  degree $2$ over $\F_p$.
\end{abstract}

\maketitle
\setcounter{tocdepth}{1}
\tableofcontents

\section{Introduction}
\label{sec:intro}

In this paper we prove the following theorem.  Call a field $k$
\emph{semi-finite\/} if it is perfect and for any prime number
$\ell$, the maximal prime-to-$\ell$ extension of $k$ is
pseudo-algebraically closed (PAC) with Galois group $\Z_{\ell}$.  For example, finite fields and
pseudo-finite fields \cite{ax} are semi-finite.  Using the theory
of Weil restriction, it is possible to show that a finite
extension of a semi-finite field is semi-finite.

Let $k$ be a semi-finite field of characteristic exponent $p$ and
$K/k$ a field extension of transcendence degree $2$.  

\begin{thm}\label{T:main}
  Any $\alpha\in\Br(K)$ satisfies $\ind(\alpha) | \per(\alpha)^2$.
\end{thm}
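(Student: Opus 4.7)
The plan is to follow the moduli-theoretic strategy for the period-index problem via twisted sheaves, with the principal new work appearing in the $p$-primary case. After the usual primary-decomposition reductions, we may assume $\per(\alpha) = \ell^n$ for a single prime $\ell$. Replacing $k$ by the algebraic closure of $k$ in $K$ (a finite, hence still pseudo-finite, extension), we may assume $K/k$ is regular, and by Lipman's resolution of surfaces, $K$ is then the function field of a smooth projective surface $X$ over $k$. Auslander--Goldman purity lifts $\alpha$ to a class $\tilde\alpha \in \Br(X)$ of the same period. The objective is then to produce a locally free $\tilde\alpha$-twisted sheaf on $X$ whose rank divides $\ell^{2n}$: the associated Azumaya endomorphism algebra witnesses $\ind(\tilde\alpha) \mid \ell^{2n}$, and restriction to the generic point gives $\ind(\alpha) \mid \per(\alpha)^2$.

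For the prime-to-$p$ case $\ell \neq p$, I would form a moduli space $\mc M$ of stable locally free $\tilde\alpha$-twisted sheaves on $X$ of rank $\ell^n$ with a cleverly chosen determinant and second Chern class. Standard arguments in the theory of \'etale twisted sheaves (Bogomolov-type inequalities, Langton-style semistable reduction, and asymptotic construction of stable bundles via deformation from reducible or nodal degenerations) should show that for appropriate numerical invariants, $\mc M$ is non-empty and geometrically integral of positive dimension over $k$. Pseudo-finiteness is then used to extract a zero-cycle on $\mc M$ of degree prime to $\ell$: one passes to the maximal prime-to-$\ell$ extension $k^{(\ell)}$, which is quasi-algebraically closed, finds a rational point on $\mc M_{k^{(\ell)}}$ by slicing down to a Fano-type linear section (or by exploiting the rationality of a natural cover of $\mc M$), and descends this point back to a prime-to-$\ell$ zero-cycle on $\mc M$. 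Such a zero-cycle produces a locally free $\tilde\alpha$-twisted sheaf whose rank divides $\ell^{2n}$, yielding the desired bound on the index.

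The principal obstacle is the $p$-primary case $\ell = p$: in positive characteristic, classes of $p$-power period correspond to $\m_{p^n}$-gerbes in the flat topology that need not be \'etale-locally trivial, so the classical \'etale theory of twisted sheaves and their moduli does not apply directly. My plan is to build a parallel theory of coherent sheaves on flat $\m_{p^n}$-gerbes over $X$, together with the corresponding notions of stability, deformation theory, and numerical invariants, and to establish analogues of Bogomolov-type inequalities and Langton's theorem in this setting. The goal is to show that the moduli space of stable rank-$p^n$ flat twisted sheaves with suitable Chern classes is non-empty and geometrically integral of positive dimension over $k$, after which the same pseudo-finiteness argument (now applied with $\ell = p$) completes the proof. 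Establishing these structural results for flat $p$-primary twisted sheaves is where the substantive work lies and what I expect to be the main technical challenge.
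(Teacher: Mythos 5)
Your proposal has a fundamental gap at the very first geometric step, and it also misses the one-line trick that trivializes the $p$-primary case.

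\textbf{Ramification is not handled.} You assert that Auslander--Goldman purity lifts $\alpha$ to $\tilde\alpha\in\Br(X)$ of the same period. Purity gives injectivity of $\Br(X)\to\Br(K)$ and identifies $\Br(X)$ with the subgroup of classes unramified along every prime divisor; it does not make a ramified class unramified. A generic $\alpha$ on a surface \emph{is} ramified, and this is where essentially all of the difficulty lives. The paper's response is to blow up until the ramification divisor $D$ is snc and geometrically stable, perform the $\ell$th-root stack construction along $D$ to obtain $\mc X\to X$, and prove (using the Artin--Mumford/Saltman ramification theory) that $\alpha$ lifts to $\Br(\mc X)[\ell]$. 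All of the subsequent moduli theory then takes place on a $\m_\ell$-gerbe over the Deligne--Mumford stack $\mc X$, and the representation theory of the stabilizers along the residual curve $\mc D$ becomes the central technical obstruction. Your plan, which works on $X$ itself, would have an empty moduli space for any ramified class, because there are no locally free $\tilde\alpha$-twisted sheaves of the claimed rank.

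\textbf{The rank is wrong.} Even setting ramification aside, a torsion-free $\tilde\alpha$-twisted sheaf of rank $r$ exists iff $\ind(\alpha)\mid r$. So a moduli space of rank-$\ell^n$ twisted sheaves is empty unless $\ind(\alpha)\mid\ell^n$, which is exactly what you cannot assume (if it held, you would be proving $\ind=\per$, which is false over non-closed $k$). The paper reduces by a standard splitting-field induction to prime period $\ell$ and then works with rank $\ell^2$, the target of the conjectured bound; this is the correct numerology. Your proposal conflates the rank of the sheaves, the period, and the degree of the zero-cycle in a way that does not cohere.

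\textbf{The $p$-primary case is trivial.} You propose to build a new theory of flat $\m_{p^n}$-twisted sheaves, their stability, Bogomolov inequalities, Langton, etc. None of this is needed. Since $k$ is perfect and $K/k$ has transcendence degree $2$, the absolute Frobenius $F:K\to K$ is finite of degree $p^2$, and pullback along $F$ acts as multiplication by $p$ on $\Br(K)$. Hence any $\alpha\in\Br(K)[p]$ is split by a degree-$p^2$ extension, and $\ind(\alpha)\mid p^2$ outright. This is reduction (3) in the paper and disposes of $\ell=p$ in three lines. The elaborate program you sketch for the $p$-case would be a huge detour and is not obviously feasible.

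\textbf{Where the real work lies.} Even after the correct reductions (prime $\ell\neq p$, stacky model, rank $\ell^2$), the hard part is not covered by ``standard Bogomolov/Langton arguments.'' The paper must (i) construct regular locally free twisted sheaves on the residual curve $\ms D$, treating the geometrically-connected and geometrically-split ramification cases separately; (ii) solve a delicate formal-local and then global lifting problem to extend such a sheaf from $\ms D$ to $\ms X$; and (iii) run an O'Grady-type ``sliding'' argument, iterating elementary transformations, to show that the resulting moduli stack has a Galois-stable, hence geometrically integral, component. Quasi-algebraic-closedness of $k$ (after a harmless prime-to-$\ell$ base change) then furnishes a rational point. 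Your outline skips all of (i)--(iii), which is the bulk of the paper.
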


\subsection*{A brief history}
We discuss the basic terminology and history
of the problem; a more extensive treatment can be found in
\cite{period-index-paper}.  A class $\alpha\in\Br(K)$ corresponds to
an isomorphism class of finite-dimensional central division algebras
$A$ over $K$.  We always have that $A\tensor\widebar
K\cong\M_n(\widebar K)$, so that $\dim_K A$ is a square.  The number
$n$ is called the \emph{index\/} of $A$ (written $\ind(A)$) and is a
crude measure of the complexity of $A$ as an algebra.  On the
other hand, as an element of the torsion group $\Br(K)$, $\alpha=[A]$ has an
order, called the \emph{period\/} of $A$ (written $\per(A)$).  This is
a measure of the complexity of $[A]$ as an element of the Brauer
group.

Using the cohomological interpretation of the Brauer group, one can
show that the period and index are related: the period always divides
the index and they have the same prime factors, so the index divides
some power of the period.  The period-index problem for the field $K$
is to determine the minimal value of $e$ such that $\ind(A) |
\per(A)^e$ for all finite-dimensional central division algebras $A$
over $K$.  This problem has proven extremely difficult, but a general
conjecture has emerged for certain fields $K$ (see page 12 of
\cite{colliot} or the Introduction of \cite{period-index-paper}): if
$K$ is a $C_d$-field then $e=d-1$.  This conjecture is known to hold when $K$ is 
\begin{itemize}
\item algebraically closed (Gauss)
\item of transcendence degree 1 over an algebraically closed field (Tsen)
\item of transcendence degree 2 over an algebraically closed field (de Jong \cite{MR2060023} with improvements in positive characteristic due to de Jong-Starr and Lieblich \cite{period-index-paper})
\item finite (Wedderburn)
\item of transcendence degree 1 over a finite field (Brauer-Hasse-Noether)
\item of transcendence degree 1 over a higher local field (Lieblich \cite{paiitbgoaas})
\end{itemize}
It is easy to see that the conjectural relation
is sharp in the
context of Theorem \ref{T:main}: if $k$ contains a primitive $n$th
root of unity with $n$ invertible in $k$ and $a\in k^\ast\setminus(k^\ast)^n$, then the bicyclic
algebra $(x,a)_n\tensor(x+1,y)_n$ is an element of $\Br(k(x,y))[n]$
whose index is strictly larger than its period.  (If $n$ is prime then
in fact this algebra has index $n^2$ -- i.e., it is a division
algebra.  For a discussion of this and numerous other examples, the reader is
referred to Section 1 of \cite{MR1923420}.)  

\subsection*{Our contribution}
In the present paper, we primarily address the case of surfaces over finite
fields (although our methods work over any semi-finite field, as defined above).  The potent combination of formal methods, geometry, and arithmetic available over a finite (resp.\ semi-finite) field make this a tractable class of $C_3$-fields.  
Theorem \ref{T:main} provides a first
example of a class of geometric $C_3$-fields for which the standard
period-index conjecture holds.
It is noteworthy that almost no
progress has been made for the other natural class of $C_3$-fields:
function fields of threefolds over algebraically closed fields.  In
fact, it is still unknown if there is any bound at all on the values
of $e$ that can occur in the relation $\ind(\alpha) | \per(\alpha)^e$
for $\alpha\in\Br(\C(x,y,z))$ (or any other fixed threefold). One might be tempted to fiber such a threefold as a surface over $\C(t)$ (or a similar field), but no attempt to do so to date has borne fruit.

\subsection*{Guide to this paper}
In Section \ref{sec:outline}, we give a broad outline of the strategy of the proof of Theorem \ref{T:main} and a guide to the contents of this paper.  Throughout this document we rely heavily on the theory of twisted sheaves.  Rather than develop the theory here from scratch, the reader is referred to \cite{period-index-paper} for background on these objects and their applications to the Brauer group.

\subsection*{A remark on parity}
The case of $\ell=2$ has special properties that necessitate significantly more complex arguments at several points. We have tried to relegate the bulk of the extra work needed in that case to clearly marked sections (Section \ref{sec:even-ram}, Section \ref{sec:numerical}, and the second half of Paragraph \ref{case2}). Readers interested in grasping the flow of the argument without getting bogged down in technical details are encouraged to omit those sections on a first reading.

\section{Outline of the proof of Theorem \ref{T:main}}
\label{sec:outline}

Let us briefly outline the strategy of the proof of the main theorem. In Section
\ref{sec:reductions}, we explain how to reduce to the case in which the class $\alpha$ 
has period $\ell$ prime to $p$, the ambient
field $K$ is the function field of a smooth projective geometrically connected
surface $X$ over a PAC base field $k$ containing a primitive
$\ell$th root of unity and having Galois group $\Z_\ell$, and the ramification divisor of $\alpha$ on $X$ has simple normal crossings. In Sections
\ref{sec:splitting} and \ref{sec:gerbe} we will explain how
to replace $X$ by a stack $\mc X$ over which $\alpha$ extends as a Brauer class
of period $\ell$ and then how to choose a good $\m_\ell$-gerbe $\ms X\to\mc X$
representing $\alpha$.  This puts us in a position to take the approach of
\cite{period-index-paper}: define a moduli space expressing the relation
$\ind(\alpha) | \ell^2$ and show that it has points.  To this end, in Section
\ref{sec:irred} we will prove the following crucial theorem.

\begin{thm}\label{T:part}
  There is an invertible sheaf $\ms N$ on $X$ and a geometrically integral open substack $\ms S$ of
  the (Artin) stack of coherent $\ms X$-twisted sheaves of rank
  $\ell^2$ and determinant $\ms N$.
\end{thm}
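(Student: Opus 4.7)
The plan is to build a single twisted sheaf of the right numerical type, verify smoothness of the moduli stack at it, and then prove that a natural $\mu$-stable locus is geometrically irreducible; the open substack $\ms S$ will be the intersection of the smooth $\mu$-stable locus with the (open) locus of determinant $L$.

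First, for existence: the gerbe $\ms X\to\mc X$ produced in Section~\ref{sec:gerbe} represents a class of period $\ell$, so by the classical correspondence between Azumaya algebras and locally free twisted sheaves, an Azumaya algebra of degree $\ell$ on a dense open $U\subset\mc X$ splits as $\send(\mc V)$ for a locally free $\ms X|_U$-twisted sheaf $\mc V$ of rank $\ell$. Taking $\mc V^{\oplus\ell}$, reflexifying the pushforward to $\ms X$, and then twisting by an invertible sheaf pulled back from $\mc X$, one obtains a coherent $\ms X$-twisted sheaf $\mc F_0$ of rank $\ell^2$. Set $L=\det\mc F_0$; by construction the stack of rank $\ell^2$, determinant $L$ twisted sheaves is non-empty.

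Next, for smoothness: by standard deformation theory for twisted sheaves on a smooth Deligne--Mumford stack, the deformation space at $\mc F$ is $\ext^1(\mc F,\mc F)_0$ (trace-zero) and the obstruction lies in $\ext^2(\mc F,\mc F)_0$. For $\mc F$ geometrically $\mu$-stable, Serre duality on $\ms X$ (which has the same canonical structure as $\mc X$, which in turn is a tame stacky model of $X$) pairs $\ext^2(\mc F,\mc F)_0$ with $\hom(\mc F,\mc F\tensor\omega)_0$, and for a generic choice of $L$ and generic $\mu$-stable $\mc F$ this vanishes. This yields an open smooth substack, and the expected dimension can be read off from Riemann--Roch on $\ms X$, giving a non-empty smooth locus of $\mu$-stable twisted sheaves with determinant $L$.

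The main obstacle is geometric irreducibility; this is where I expect the work to concentrate. The plan is a two-step reduction. First, pick a sufficiently ample smooth curve $C\subset\mc X$ transverse to the ramification so that $\ms X|_C$ is a $\m_\ell$-gerbe on a smooth stacky curve; by the results cited in \cite{period-index-paper} (the curve case of the moduli of twisted sheaves), the moduli of $\mu$-stable rank $\ell^2$ twisted sheaves on $\ms X|_C$ with fixed determinant is geometrically irreducible. Second, show that the restriction morphism from the surface moduli stack to the curve moduli stack is dominant with geometrically irreducible generic fibre; irreducibility of the fibre follows from the fact that rank $\ell^2$ twisted sheaves on $\ms X$ extending a given $\mu$-stable twisted sheaf on $\ms X|_C$ are parametrized by an open subset of a (twisted) Quot-scheme whose general fibre is a projective bundle over an iterated extension space, both of which are geometrically irreducible. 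Dominance follows by a Bertini-type argument: a general deformation of $\mc F_0$ remains $\mu$-stable on $C$ and, by semicontinuity of $\hom$, its restriction remains stable. The intersection of the smooth locus, the $\mu$-stable locus, and the determinant $L$ locus then gives the required geometrically integral open substack $\ms S$.
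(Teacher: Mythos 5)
Your proposal misreads the structure of the problem in two ways that matter.

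\textbf{First, the moduli stack is genuinely reducible.} The stack of rank-$\ell^2$, fixed-determinant $\ms X$-twisted sheaves (and also its $\mu$-stable or unobstructed locus) does \emph{not} become geometrically irreducible after imposing reasonable open conditions. The obstruction is the stacky curve $\ms D$ lying over the ramification divisor: the restriction of any twisted sheaf to $\ms D$ carries discrete representation-theoretic invariants (which isotypic components appear at the residual gerbes, how eigensubsheaves of the generic stabilizer action are arranged, what the degrees of those eigensubsheaves are), and these invariants are locally constant in families. Consequently the components of the moduli stack are indexed (in part) by the component of the moduli stack of $\ms D$-twisted sheaves to which the restriction belongs. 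Restricting to a general ample curve $C$ transverse to $\mc D$, as you propose, does not see this data: $C\cap\mc D$ misses the singular residual gerbes where the real constraints live, and the fibers of the restriction map are not irreducible. The paper therefore never attempts to prove irreducibility of any naturally-defined stable locus. Instead, it fixes a regular $\ms D$-twisted sheaf $\ms W$, passes to the open substack $\ms U(\ms W)$ of unobstructed sheaves whose restriction to $\ms D$ lies in the component of $\ms W$, and then runs an O'Grady-style argument: it defines a $\Gal(\widebar k/k)$-equivariant ``contraction'' $\tau$ on the set of geometric components (Construction~\ref{cons:tau}), proves that $\tau$ eventually merges any two equisingular components of the same Chern class (Proposition~\ref{P:contract}, via the elementary-transformation/Bertini analysis in Lemma~\ref{L:cok}), and deduces from finiteness of Galois orbits (Lemma~\ref{L:Gal-orbits-finite}) that $\tau$ eventually lands on a Galois-fixed component. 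That Galois-fixed component descends to the required geometrically integral open substack over $k$. Nothing is ever proved irreducible; a single good component is located.

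\textbf{Second, existence is where the real work is, and your construction does not produce it.} Your rank-$\ell$ Azumaya algebra on a dense open exists only over $\widebar k$ (over $k$ it would force $\ind(\alpha_K)\mid\ell$, which is false in general). More seriously, taking $\mc V^{\oplus\ell}$, pushing forward and reflexifying produces a sheaf with essentially no control over its behavior along $\ms D$; it will generically \emph{not} be regular (a sum of copies of the regular $\m_\ell$-representation) at the residual gerbes, so it lands in the wrong components and cannot be steered into a Galois-fixed one. The paper spends Sections~\ref{sec:existence-d-2}--\ref{sec:nonempty} on precisely this: producing a regular locally free $\ms D$-twisted sheaf $\ms W$ of rank $\ell^2$ over $k$ itself (using quasi-algebraic closure of $k$ and irreducibility of curve moduli stacks, with separate treatment of geometrically connected vs.\ geometrically split ramification and, for $\ell=2$, Saltman-style hot/cold/chilly bookkeeping), and then lifting $\ms W$ to a locally free $\ms X\tensor\widebar k$-twisted sheaf with trivial determinant via Artin's local uniqueness of maximal orders (Proposition~\ref{P:artin}), formal gluing, and a lifted elementary transformation (Proposition~\ref{P:nonempt}). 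Your proposal also sets $L=\det\mc F_0$ with $\mc F_0$ defined only over $\widebar k$, so $L$ need not descend to $X$; the paper instead pins $L$ to $\ms O_X$ (or $\ms O_X(R)$ when $\ell=2$) in advance and arranges the construction to have that determinant. Finally, you use $\mu$-stability to kill obstructions; the paper works with the weaker open condition of unobstructedness (Lemma~\ref{L:U}), partly because it is unclear how to produce a stable, regular, trivial-determinant sheaf directly, and partly because stability plays no role in the component-counting argument.
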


Since any such stack has affine stabilizers (the action of $\aut(\ms F)$ on
$\End(\ms F)$ being a faithful linear representation), it follows from
Proposition 3.5.9 of \cite{kresch-cycle} that there is a 
geometrically integral quasi-projective $k$-scheme $S$ and a smooth
morphism $\rho:S\to\ms S$.  This permits us to prove the main theorem.

\begin{proof}[Proof of Theorem \ref{T:main}]
  As $k$ is PAC and $S$ is geometrically
  irreducible (and thus a priori non-empty!), we know that $S(k)\neq\emptyset$.  Thus, there is an
  object of $\ms S$ over $k$, yielding an $\ms X$-twisted sheaf of
  rank $\ell^2$.  We conclude that $\ind(\alpha)$ divides $\ell^2$, as
  desired.
\end{proof}

The proof of Theorem \ref{T:part} is somewhat delicate and occupies
Sections \ref{sec:twisted-stacks} through \ref{sec:irred}.  It is
roughly modeled after O'Grady's proof of the irreducibility of the
space of stable sheaves on a smooth projective surface, but the
representation-theoretic content of the stack $\ms X$ makes the
problem more complicated.  In particular, there is a curve $\ms
D\subset\ms X$ of ``maximal stackiness'' that itself contains points of ``even more maximal stackiness'', and this stratification breaks up the moduli
problem into additional components.  Thus, after introducing the general moduli problems in Section \ref{sec:twisted-stacks}, we will produce objects over the base field in two steps.
\begin{enumerate}
\item In Section 8, we will produce a twisted
sheaf $\ms W$ of rank $\ell^2$ and suitable determinant supported on the curve $\ms D$. This in turn involves carefully tracking the properties of twisted sheaves supported over the singular points of $D$; in fact, this $0$-dimensional stratum is in some sense responsible for the $\ell^2$ in the period-index relation (rather than the $\ell$ proven in \cite{period-index-paper} for unramified classes).
\item In Sections \ref{sec:formal} through \ref{sec:irred} we will study
the moduli of $\ms X$-twisted sheaves whose restrictions to $\ms D$
are deformations of $\ms W$. Showing that the latter moduli space is
non-empty is a subtle lifting and formal gluing problem, carried out in Sections
\ref{sec:formal} through \ref{sec:nonempty}. The final O'Grady-type convergence result, showing that Galois orbits of components of moduli spaces eventually become singletons as a function of a discrete parameter, is proven in Section \ref{sec:irred}.
\end{enumerate}
It is tempting to hope that a deeper understanding of the link between the stratification by stabilizer and the inductive nature of the moduli problems might produce a strategy for working with higher dimensional varieties, but a precise formulation of such a principle is currently lacking.
\section{Reductions}
\label{sec:reductions}

In the following we fix $\alpha\in\Br(K)$. In this section we explain several
reductions that gradually make the problem increasingly geometric (and
tractable).

\begin{enumerate}
\item {\bf We may assume that $K$ is finitely generated over $k$ and that
    $k$ is algebraically closed in $K$}.  Indeed, a division algebra
  representing $\alpha$ is finitely generated over $K$, hence is
  defined over a finitely generated subfield of $K$ of transcendence
  degree $2$ over $k$.  The algebraic closure of $k$ in $K$ will be a
  finite extension and thus a semi-finite field.  Geometrically, we may
  assume that $K$ is the function field of a smooth projective 
  geometrically integral surface $X$ over $k$.
\item {\bf We may assume that $\alpha$ has prime period $\ell$}.  Indeed,
    suppose $\ell$ is a prime dividing $\per(\alpha)$.  (We do not yet assume
  that $\ell\neq p$.)  The class $(\per(\alpha)/\ell)\alpha$ has
  period $\ell$, hence by assumption we know that it has index
  dividing $\ell^2$.  There is thus a splitting field $K'/K$ of degree
  dividing $\ell^2$.  The class $\alpha_{K'}$ has period
  $\per(\alpha)/\ell$, whence by induction it has index dividing
  $(\per(\alpha)/\ell)^2$, so that there is a splitting field $K''/K'$
  of degree dividing $(\per(\alpha)/\ell)^2$.  We conclude that
  $K''/K$ is a splitting field of $\alpha$ of degree dividing
  $\per(\alpha)^2$, so that $\ind(\alpha) | \per(\alpha)^2$, as
  desired.
\item {\bf We may assume that the period $\ell$ of $\alpha$ is 
    distinct from $p$}.  Indeed, suppose $\alpha\in\Br(K)[p]$.  The
  absolute Frobenius $F:K\to K$ is a finite free morphism of degree
  $p^2$ and acts as multiplication by $p$ on $\Br(K)$.  It thus
  annihilates $\alpha$ by a field extension of degree $p^2$, as
  desired.  
\item {\bf We may assume that $k$ is PAC with Galois group $\Z_\ell$ and
    contains a primitive $\ell$th root of unity $\zeta$}.  Indeed, the
  algebra $k(\zeta)$ is contained in the maximal prime-to-$\ell$
  extension of $k$.  If $k'/k$ has degree $d$ relatively prime to
  $\ell$ and the restriction of $\alpha$ to $X\tensor k'$ has index
  dividing $\ell^2$ then $\alpha$ has index dividing $\ell^2d$, whence
  it has index dividing $\ell^2$ as its index is a power of $\ell$.
\item {\bf We may assume that the ramification divisor of $\alpha$ is a
    strict normal crossings (snc) divisor $D\subset X$}.  Indeed, if
  $D\subset X$ is the ramification divisor of $\alpha$ then we can
  find a blowup $b:\widetilde X\to X$ such that $\widetilde
  D:=b^{-1}(D)_{\text{\rm red}}$ is a snc divisor.  Since the
  ramification divisor of $\alpha$ on $\widetilde X$ is a subdivisor
  of $\widetilde D$, it must be snc.

\end{enumerate}
For the remainder of this paper, we will assume that $K=k(X)$ is the
function field of a smooth projective geometrically integral surface
over a PAC field $k$ with Galois group $\Z_\ell$ and that $\alpha$ is a
geometrically stable class of 
prime period $\ell$ different from
$p$ with snc ramification divisor $D=D_1+\cdots+D_n$.

\begin{notation}\label{N:prim} 
Because this condition will come up repeatedly, and we know of no existing term for it in the literature, we will call a field that is PAC with Galois group $\Z_\ell$, for an $\ell$ prime to the characteristic, an \emph{$\ell$-primitive\/} field.
\end{notation}

\section{Ramification}
\label{sec:splitting}

We briefly review the main aspects of the ramification theory of
Brauer classes as they apply in the present context.  A detailed
description of the theory is given in Section 3 of \cite{MR0321934} and
Section 2.5 of \cite{artin-dejong}.

\subsection{Splitting ramification with a stack}
The ramification theory of Brauer classes associates to each $D_i$ a
cyclic $\Z/\ell\Z$-extension $L_i/k(D_i)$, called the \emph{(primary)
  ramification\/}.  These extensions have the property that $L_i$ can
ramify only over points of $D_i$ which meet other components of $D$.
Moreover, if $q\in D_i\cap D_j$ then the ramification index of $L_i$
at $q$ equals the ramification index of $L_j$ at $q$.  This index is
called the \emph{secondary ramification\/}.

It is a basic consequence of the description of the ramification
extension that $\alpha$ restricted to $K(t^{1/\ell})$ is in the image
of $\Br(\ms O_{X,\eta_i}[t^{1/\ell}])$, where $t$ is a local equation
for $D_i$ and $\eta_i\in D_i$ is the generic point.  (This is just
Abhyankar's lemma for central simple algebras.  A proof can be found e.g.\ in Proposition 1.3 of \cite{MR1462850})  We can globalize this
splitting of the ramification if we use a stacky branched cover (that has the
advantage of not changing the function field) as follows.  

Let $r:\mc X\to X$ be the result of applying the $\ell$th root
construction (described, for example, in Section 2 of
\cite{cadman_using_2007}) to the components of the divisor $D$.  We
know that $\mc X$ is a smooth proper geometrically integral
Deligne-Mumford surface over $k$ and that $r$ is an isomorphism over
$X\setminus D$.  For each component $D_i$ of $D$, the reduced preimage
$\mc D_i\subset\mc X$ is a $\m_\ell$-gerbe over a stacky curve. The
reduced preimage $\mc D$ of $D$ in $\mc X$ is a {\it residual curve\/} of the
type studied in \cite{paiitbgoaas}. We will briefly review their properties in Section \ref{sec:residual-curves} below when discussing the existence of twisted sheaves over $\mc D$.

Since $\mc X$ is smooth, the restriction map $\Br(\mc X)\to\Br(K)$ is
injective.  It thus makes sense to ask if the element $\alpha$ belongs
the former group.  We recall the following fundamental result.

\begin{prop}
  The class $\alpha$ lies in $\Br(\mc X)[\ell]$.
\end{prop}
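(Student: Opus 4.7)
My plan is to invoke the absolute purity theorem for Brauer groups on the smooth, proper, two-dimensional Deligne--Mumford stack $\mc X$. Since the excerpt has already noted that $\Br(\mc X) \to \Br(K)$ is injective, it suffices to exhibit a class in $\Br(\mc X)$ restricting to $\alpha$ on the generic point; purity then reduces the task to showing that $\alpha$ is unramified at every codimension-one point of $\mc X$.

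The codimension-one points of $\mc X$ come in two types. The first consists of those whose image in $X$ lies outside $D$; here $r$ is an isomorphism and $\alpha$ is unramified at such points by the very definition of the ramification divisor $D$. The second consists of the generic points $\xi_i$ of the $\m_\ell$-gerbes $\mc D_i$. To verify unramifiedness at $\xi_i$, I would work \'etale-locally: by the definition of the root stack, an \'etale neighborhood of $\xi_i$ takes the form $[\spec B_i / \m_\ell]$, where $B_i = A_i[s]/(s^\ell - t)$, $A_i$ is the strict henselization of $\ms O_{X,\eta_i}$, $t$ is a local equation for $D_i$, and $\m_\ell$ acts by multiplication on $s$ via its tautological character.

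On this chart, Abhyankar's lemma for central simple algebras (recalled in the excerpt) gives an extension of $\alpha$ to $\Br(\spec B_i)$. Because $B_i$ is a regular local ring, the map $\Br(B_i) \to \Br(\operatorname{Frac}(B_i))$ is injective, which makes the extension unique; since the image of $\alpha$ in $\Br(\operatorname{Frac}(B_i))$ is manifestly $\m_\ell$-invariant (it is pulled back from $K = \operatorname{Frac}(B_i)^{\m_\ell}$), uniqueness forces the extension to carry a canonical $\m_\ell$-linearization and hence to descend to a class in $\Br([\spec B_i / \m_\ell])$. This witnesses unramifiedness of $\alpha$ at $\xi_i$ and, by purity, yields the desired extension $\widetilde\alpha \in \Br(\mc X)$, which automatically lies in $\Br(\mc X)[\ell]$ by the injectivity of restriction to $K$ and the fact that $\alpha$ has period $\ell$.

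The main obstacle is the stack-theoretic bookkeeping: one must confirm that absolute purity for Brauer groups is available on a smooth DM surface, and that equivariant Brauer classes on the finite \'etale $\m_\ell$-cover $\spec B_i \to [\spec B_i/\m_\ell]$ (\'etale because $\ell$ is invertible in $k$) genuinely descend to the quotient stack. Both should follow from a Hochschild--Serre argument applied to the torsor, combined with the injectivity of $\Br \to \Br(\text{function field})$ on smooth bases, but they need careful verification. Once these generalities are in hand, Abhyankar's lemma does all the geometric work.
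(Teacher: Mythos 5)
The paper gives no in-line proof, deferring to Proposition 3.2.1 of \cite{paiitbgoaas}; your outline---Abhyankar's lemma on the chart $[\spec B_i/\m_\ell]$, then purity on $\mc X$---matches the discussion that immediately precedes the statement, so you have identified the intended strategy. The dichotomy of codimension-one points and the local description of the root stack are both correct.

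However, your descent step contains a genuine gap. Injectivity of $\Br(B_i)\to\Br(\operatorname{Frac}(B_i))$ tells you only that the Abhyankar extension $\beta$ is $\m_\ell$-\emph{invariant} as a Brauer class ($\gamma^\ast\beta=\beta$); it does not ``force a canonical $\m_\ell$-linearization.'' Passing from an invariant cohomology class to a descended class along the $\m_\ell$-torsor $\spec B_i\to[\spec B_i/\m_\ell]$ is obstructed: in the Hochschild--Serre spectral sequence the $\Pic$-terms vanish (since $B_i$ is local), so the obstruction is exactly the $d_3$-differential into $\H^3(\m_\ell,B_i^\times)$, and nothing you have written shows that it vanishes on $\beta$. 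Passing to maximal orders does not automatically close the gap either, because maximal $B_i$-orders in an Azumaya algebra over $\operatorname{Frac}(B_i)$ are unique only up to conjugation, so a chosen one need not be $\m_\ell$-stable. One way to repair this is to avoid the up-and-down trip through $\spec B_i$ entirely and instead run the localization/purity argument directly on the stacky discrete valuation ring $[\spec B_i/\m_\ell]$: the residue of $\alpha\in\Br(K)$ then lands in $\H^1$ of the residual gerbe $\B\m_{\ell,\kappa}$, and both the ``ordinary'' part of this group (killed by Abhyankar's lemma) and the ``character'' part (controlled by the hypothesis $\m_\ell\subset k$ and the choice of root construction) vanish. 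As for the other point you flag---absolute purity for Brauer groups on a smooth Deligne--Mumford surface---that is a real prerequisite but a standard one given the Kresch--Vistoli finite flat uniformization by a smooth projective scheme together with Grothendieck's purity for regular schemes of dimension two.
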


For a proof, the reader is referred to Proposition 3.2.1 of
\cite{paiitbgoaas}.

\subsection{Adjusting ramification when $\ell=2$}
\label{sec:even-ram}

In this section we discuss a method for reducing the algebraic
complexity of the ramification divisor for classes of period $2$.  A
similar type of phenomenon undoubtedly also holds for classes of odd
period, but it is significantly more complicated and will not help
with the main result.  The results described here are essentially
special cases of those in \cite{saltman_cyclic_2007}, with slight
changes for the present situation.

Fix a component $\mc D_i$ of the stacky locus in $\mc X$.  Recall that
a singular residual gerbe $\xi$ of $\mc D$ has the form
$(\B\m_2\times\B\m_2)_\kappa$ for a $2$-primary field $\kappa$ (see Notation \ref{N:prim}).  As
discussed in Section 4.3 of \cite{paiitbgoaas}, the class $\alpha_\xi$ is
uniquely determined by a pair of $\Z/2\Z$-cyclic \'etale
$\kappa$-algebras $L_1,L_2$ and an element $\gamma\in\m_2(\kappa)=\{1,-1\}$.  We will
always choose the identification so that the first factor is the
generic stabilizer of $\mc D_i$ (and the second is the generic
stabilizer of another component $\mc D_j$).  In particular, when
$\gamma=1$ we have that $L_1$ is
the specialization of the (\'etale) ramification extension of $D_i$.
Refining Saltman's terminology from \cite{saltman_cyclic_2007},
we say that
\begin{enumerate}
\item $\xi$ is \emph{cold\/} if $\gamma=-1$; otherwise, $\gamma=1$ and we say that 
\item $\xi$ is \emph{chilly\/} if $L_1$ and $L_2$ are both non-trivial
    $\Z/2\Z$-extensions;
\item $\xi$ is \emph{hot\/} if $L_1$ is non-trivial and $L_2$ is trivial;
\item $\xi$ is \emph{scalding\/} if $L_1$ is trivial and $L_2$ is non-trivial.
\end{enumerate}

We will call a singular point of the ramification divisor $D$ of
$\alpha$ cold, chilly, etc., if its reduced preimage in $\mc X$ is
cold, chilly, etc.  
The main result in this section is the following.

\begin{prop}\label{P:scalding}
  There is a proper smooth surface $X$ with function field $K$ such that the ramification
  divisor of $\alpha$ decomposes as $D=S+R$, where 
  \begin{enumerate}
  \item each component $S_i$ of $S$ contains only cold and scalding points, and (therefore) no two distinct components of $S$ intersect in a non-cold point;
  \item the components $R_i$ of $R$ are disjoint $(-2)$-curves, and each $R_i\cap D$ consists of precisely one $\Gamma(R_i,\ms O)$-rational hot point of $R_i$.
  \end{enumerate}
\end{prop}
\begin{proof}
  Choose any $X$ over which $\alpha$ has snc ramification divisor $D$
  and let $D_i\subset D$ be a component.  
  We will show that all non-cold points of $D_i\cap (D\setminus D_i)$ that are not scalding can be
  eliminated by blowing up.  

  \begin{lem}\label{L:chilly-blow}
    If $p\in D_i\cap D_j$ is a chilly point then the exceptional divisor of
    the blowup of $X$ at $p$ is not a ramification divisor.
  \end{lem}
  \begin{proof}
    Let $x$ and $y$ be local equations for $D_i$ and $D_j$ at $p$.  As
    explained in Proposition 1.2 of \cite{MR1462850}, we can write
    $\alpha_{K(\widehat{\ms O}_{X,p})}=\alpha'+(x,a)+(y,a)$, where $\alpha'\in\Br(\widehat{\ms O}_{X,p})$.
    A local equation for the blowup is given by $x=yX$, where $X$ is a
    coordinate on the exceptional divisor $E$.  We find that
    $\alpha_{\Bl_p\spec\widehat{\ms O}_{X,p}}=\alpha'+(X,a)$, which is
    unramified at $E$ (whose local equation is $y=0$).  
  \end{proof}

  \begin{lem}\label{L:hot-blow}
    If $p\in D_i\cap D_j$ is a hot point of $D_i$ then 
    \begin{enumerate}
    \item the exceptional divisor $E$ of $\Bl_p X$ is a ramification
      divisor with precisely one hot $\kappa(p)$-rational point;
    \item the intersection of the strict transform $\widetilde D_i$ with $E$
      is a chilly point of $\widetilde D_i$.
    \end{enumerate}
  \end{lem}
  \begin{proof}
    Arguing as in the proof of Lemma \ref{L:chilly-blow}, locally we
    have that $\alpha=\alpha'+(x,a)=\alpha'+(X,a)+(y,a)$.  Since $y$
    cuts out $E$, we see from the elementary ramification calculation
    of Section 3 of Chapter XII of \cite{MR554237} that $\alpha$ ramifies (with a constant ramification
    extension given by taking the square root of $a$) along $E$.  Moreover, $X$ locally cuts out $\widetilde
    D_i$, and we see that the point $X=y=0$ is chilly.  Finally,
    taking the other coordinate patch with $xY=y$, we see that
    $\alpha$ does not ramify along $(Y=0)$, which is $\widetilde D_j$,
    showing that $E\cap\widetilde D_j$ is hot, as claimed.
  \end{proof}
  Combining Lemmas \ref{L:chilly-blow} and \ref{L:hot-blow}, we see
  that we can blow up a chilly point to eliminate it and a hot point
  to create a pair consisting of a chilly point and a hot point on a
  $(-1)$-curve.  Blowing up again to eliminate the chilly point yields
  a $(-2)$-curve containing precisely one hot point rational over its
  constant field. At each step, a component of the original ramification divisor $D$ is made to contain fewer hot points, while $(-2)$-curves with single hot points are added. Once every component of $D$ has been ameliorated in this way, the ramification divisor assumes the form described in the statement of Proposition \ref{P:scalding}, completing the proof.

In order to see the ``therefore'' clause of the first part of the Proposition, note that a scalding point is hot on the transverse curve, so that if all hot points are eliminated from a set of components, only cold intersection points can remain among those components.
\end{proof}

\begin{cor}\label{C:ghost-dis}
If $D_i$ and $D_j$ are distinct components of $S$ that contain no cold points, then $D_i\cap D_j=\emptyset$.
\end{cor}
\begin{proof}
The divisors $D_i$ and $D_j$ only contain scalding and cold points. If they include no cold points, then all special points are scalding. But a scalding point (by definition!) is hot on the complementary component (as the ramification is non-trivial on the transverse curve). Thus, a point cannot simultaneously be scalding for two components, and thus $D_i\cap D_j=\emptyset$, as desired.
\end{proof}

\section{Fixing a uniformized $\m_\ell$-gerbe}
\label{sec:gerbe}

In this section we optimize the topological properties of a $\m_\ell$-gerbe representing the Brauer class $\alpha\in\Br(\mc X)$. The main result is Proposition \ref{P:vanishy}.

The Kummer sequence provides a short exact sequence 
$$\xymatrix{0\ar[r] &\Pic(\mc X)/\ell\Pic(\mc X)\ar[r]^-{c_1}& \H^2(\mc
X,\m_\ell)\ar[r]&\H^2(\mc X,\G_m)[\ell]\ar[r]& 0.}$$ We can thus choose a
lift of $\alpha$ to a class $\widetilde\alpha\in\H^2(\mc X,\m_\ell)$,
and we can modify this lift by classes coming from invertible sheaves
on $\mc X$ without changing the associated Brauer class.  We will
choose a particular lift which has a nice structure with respect to
the stacky locus $\mc D\subset\mc X$.  For each $i$, let
$\widebar\eta_i\to D_i$ be a geometric generic point and
$\widebar\xi_i\to\mc D_i$ be the reduced pullback to $\mc D_i$.  The
formation of the root construction provides a canonical isomorphism
$\widebar\xi_i\cong\B\m_{\ell,\widebar\eta_i}$.  

\begin{lem}\label{sec:fixing-m_ell-gerbe}
  Via the Kummer sequence, the invertible sheaf $\ms O_{\mc X}(\mc
  D_i)_{\widebar\xi_i}$ generates
  $\H^2(\widebar\xi_i,\m_\ell)=\Z/\ell\Z$.
\end{lem}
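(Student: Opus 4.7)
The plan is to trace through the root-stack construction to identify $\ms O_{\mc X}(\mc D_i)$ explicitly near $\widebar\xi_i$, and then apply the Kummer sequence on $\widebar\xi_i\cong\B\m_{\ell,\widebar\eta_i}$.

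First, I would recall that applying the $\ell$th root construction to the pair $(\ms O_X(D_i), s_i)$, where $s_i$ is the canonical section, produces on $\mc X$ a universal pair $(\ms M_i, t_i)$ satisfying $\ms M_i^{\otimes\ell}\cong r^*\ms O_X(D_i)$ with $t_i^{\otimes\ell}$ corresponding to $r^*s_i$. The stacky divisor is $\mc D_i=V(t_i)$, and under this identification $\ms O_{\mc X}(\mc D_i)\cong\ms M_i$. Pulling back along $\widebar\xi_i\to\widebar\eta_i\to X$, the line bundle $r^*\ms O_X(D_i)|_{\widebar\xi_i}$ is trivial (being the pullback of an invertible sheaf on the geometric point $\widebar\eta_i$), so $\ms M_i|_{\widebar\xi_i}$ lies in the $\ell$-torsion of $\Pic(\widebar\xi_i)$, i.e.\ corresponds to a character of $\m_\ell$.

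Next, I would identify this character as a generator by computing in a local model. Étale-locally near $\widebar\eta_i$ the root construction presents $\mc X$ as $[\spec(A[s]/(s^\ell-t))/\m_\ell]$, where $t$ is a local equation for $D_i$, $A$ is the strict henselization of $\ms O_{X,\eta_i}$ at $\widebar\eta_i$, and $\m_\ell$ acts on $s$ by the tautological character $\chi$. The ideal $\ms I_{\mc D_i}$ is generated by $s$ in this presentation, so as a $\m_\ell$-equivariant invertible sheaf on $\widebar\xi_i$ it corresponds to $\chi$; thus $\ms O_{\mc X}(\mc D_i)|_{\widebar\xi_i}\cong\ms I_{\mc D_i}^{\vee}|_{\widebar\xi_i}$ corresponds to $\chi^{-1}$, a generator of the character group $\Pic(\widebar\xi_i)\cong\Z/\ell\Z$.

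Finally, I would conclude via the Kummer sequence on $\widebar\xi_i$. Since $\ell$ is invertible in $k$, a direct computation (or the simplicial resolution of $\B\m_\ell$) gives $\H^2(\widebar\xi_i,\m_\ell)\cong\Z/\ell\Z$, and the Kummer-class map $c_1:\Pic(\widebar\xi_i)/\ell\to\H^2(\widebar\xi_i,\m_\ell)$ is then an isomorphism (both sides have order $\ell$, and $\Br(\widebar\xi_i)[\ell]=0$ over a geometric point). Therefore the generator $\ms O_{\mc X}(\mc D_i)|_{\widebar\xi_i}$ of $\Pic(\widebar\xi_i)/\ell$ is sent to a generator of $\H^2(\widebar\xi_i,\m_\ell)$, as claimed. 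The only step requiring real care is the character identification in the second paragraph; once the local presentation is written down the rest is bookkeeping with standard cohomology of $\B\m_\ell$.
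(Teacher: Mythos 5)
Your proof is correct and takes essentially the same approach as the paper's: a local quotient-stack presentation near the generic point of $\mc D_i$, identification of the $\m_\ell$-action on the fiber of $\ms O_{\mc X}(\mc D_i)$ as the inverse of the tautological character via the generator of the ideal sheaf, and conclusion via the Kummer map on $\B\m_{\ell,\widebar\eta_i}$. The additional details you supply (universal property of the root stack, triviality of $r^*\ms O_X(D_i)|_{\widebar\xi_i}$, vanishing of $\Br(\widebar\xi_i)[\ell]$) are exactly what the paper's terser proof leaves implicit.
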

\begin{proof}
  It suffices to show that the action of $\m_\ell$ on the geometric
  fiber of $\ms O_{\mc X}(\mc D_i)$ is via a generator of the
  character group $\Z/\ell\Z$.
  Suppose $s\in\ms O_{X,\eta_{D_i}}$ is a local uniformizer for $D_i$.  In
  local coordinates at the generic point of $\mc D_i$ we can realize
  $\mc X$ as the quotient stack $[\spec(\ms
  O_{X,\eta_{D_i}}[t]/(t^\ell-s))/\m_\ell]$ with $\m_\ell$ acting on
  $t$ by scalar multiplication.  But $t$ is a local generator of
  $\ms O_{\mc X}(-\mc D_i)$, so the action of $\m_\ell$ on the fiber
  is via the inverse of the natural character, and this generates the
  character group.
\end{proof}

\begin{prop}\label{P:vanishy}
  There is a lift $\widetilde\alpha\in\H^2(\mc X,\m_\ell)$ such that
  for all $i$ the restriction
  $\widetilde\alpha|_{\widebar\xi_i}$ vanishes in $\H^2(\widebar\xi_i,\m_\ell)$.
\end{prop}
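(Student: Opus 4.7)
The plan is to exploit the Kummer short exact sequence displayed above: any two lifts of $\alpha$ to $\H^2(\mc X,\m_\ell)$ differ by an element of the image of $c_1\colon\Pic(\mc X)/\ell\Pic(\mc X)\to\H^2(\mc X,\m_\ell)$, so we are free to correct any initial lift by Chern classes of invertible sheaves on $\mc X$ without changing the associated Brauer class. The idea is to use the classes $c_1(\ms O_{\mc X}(\mc D_i))$, which by Lemma \ref{sec:fixing-m_ell-gerbe} restrict to generators of $\H^2(\widebar\xi_i,\m_\ell)$, as a basis of ``correctors'' against which to cancel the contributions of an initial lift at each $\widebar\xi_i$ simultaneously.

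First I would pick any lift $\widetilde\alpha_0\in\H^2(\mc X,\m_\ell)$ of $\alpha$ and record, via the generator supplied by Lemma \ref{sec:fixing-m_ell-gerbe}, elements $a_i\in\Z/\ell\Z$ so that $\widetilde\alpha_0|_{\widebar\xi_i}$ corresponds to $a_i$. The key supporting fact I must verify is that for $j\neq i$ the restriction $c_1(\ms O_{\mc X}(\mc D_j))|_{\widebar\xi_i}$ vanishes in $\H^2(\widebar\xi_i,\m_\ell)$. This runs parallel to the proof of Lemma \ref{sec:fixing-m_ell-gerbe}: the divisor $D_j$ does not meet the generic point of $D_i$, so in a Zariski neighborhood of the generic point of $\mc D_i$ the substack $\mc D_j$ is empty, and $\ms O_{\mc X}(\mc D_j)$ is canonically trivialized by a section pulled back from $X$. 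The induced $\m_\ell$-action on the fiber at the geometric point $\widebar\xi_i$ is thus the trivial character, so the Kummer class of the restriction vanishes.

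Combining these observations, the modified class
$$\widetilde\alpha := \widetilde\alpha_0 - \sum_{i} a_i\, c_1(\ms O_{\mc X}(\mc D_i))$$
is again a lift of $\alpha$, and for each fixed $i$ the diagonal contribution cancels $a_i$ while the off-diagonal contributions from $j\neq i$ are zero, so $\widetilde\alpha|_{\widebar\xi_i}=0$ as required. The only step that requires genuine care is the off-diagonal vanishing, but once one has at hand the explicit local model of the root stack already used in Lemma \ref{sec:fixing-m_ell-gerbe}, the computation is essentially the same; I do not anticipate a serious obstruction beyond this bookkeeping.
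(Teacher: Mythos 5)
Your proof is correct and is essentially the same argument the paper gives: pick any lift, record for each $i$ the class of its restriction to $\widebar\xi_i$ as a multiple $a_i$ of the generator $c_1(\ms O_{\mc X}(\mc D_i))|_{\widebar\xi_i}$ provided by Lemma \ref{sec:fixing-m_ell-gerbe}, and subtract $\sum_i a_i\, c_1(\ms O_{\mc X}(\mc D_i))$. The one thing you do that the paper leaves tacit is spell out the off-diagonal vanishing $c_1(\ms O_{\mc X}(\mc D_j))|_{\widebar\xi_i}=0$ for $j\neq i$ — which is indeed needed so that the corrections at the various $\widebar\xi_i$ do not interfere — and your justification (the tautological section of $\ms O_{\mc X}(\mc D_j)$ is invertible near the generic residual gerbe of $\mc D_i$ since $\mc D_j$ is disjoint from it, hence the stabilizer acts trivially on the fiber) is right, though the phrase ``pulled back from $X$'' is a touch loose: what trivializes $\ms O_{\mc X}(\mc D_j)$ near $\widebar\xi_i$ is the tautological section cutting out $\mc D_j$, not a section coming from $X$.
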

\begin{proof}
  Choose any lift $\widetilde\alpha'$.  By Lemma
  \ref{sec:fixing-m_ell-gerbe}, for each $i$ there exists $j_i$ such
  that the restriction of $\widetilde\alpha'$ to $\widebar\xi_i$ has
  the same class as $\ms O_{\mc X}(j_i\mc D_i)$.  Setting
  $\widetilde\alpha=\widetilde\alpha'-c_1(\ms O_{\mc X}(-\sum_i j_i\mc
  D_i)$ gives the desired result.
\end{proof}

\begin{notation}
  For the rest of this paper we fix a $\m_\ell$-gerbe $\pi:\ms X\to\mc
  X$ whose associated cohomology class $[\ms X]$ maps to
  $\alpha\in\Br(K)$ and has the property that for each $i=1,\ldots,n$,
  the pullback $\ms X\times_{\mc X}\widebar\xi_i$ is isomorphic to
  $\widebar\xi_i\times\B\m_\ell$.  We will write $\ms D_i$ for the
  reduced preimage of $D_i$ in $\ms X$ and $\ms D$ for the reduced
  preimage of $D$.  There is an equality $\ms D=\sum\ms D_i$ of
  effective (snc) Cartier divisors.
\end{notation}

We will also need to define a second Chern class and
Castelnuovo-Mumford regularity for $\ms X$-twisted sheaves.  One way
to do this is via a projective uniformization of $\ms X$.  Let
$u:Z\to\ms X$ be a finite flat cover by a smooth projective surface.
(That such a uniformization exists follows from Theorem 1 and Theorem
2 of \cite{MR2026412}, combined with Gabber's Theorem that $\Br$ and
$\Br'$ coincide for quasi-projective schemes, a proof of which may be
found in \cite{dejong-gabber}.)

\begin{defn}
  Given a coherent $\ms X$-twisted sheaf $\ms F$, the \emph{second
    $u$-Chern class of $\ms F$\/} is $c(\ms F):=\deg c_2(u^\ast\ms
  F)$.  The \emph{$u$-Castelnuovo-Mumford regularity of $\ms F$\/}, written
  $r(\ms F)$, is the Castelnuovo-Mumford regularity of $u^\ast\ms F$.
\end{defn}

\section{Residual curves, ghost components, and intersection numbers}
\label{sec:numerics}
In this section, we briefly review the theory of residual curves introduced in \cite{paiitbgoaas} and use it to refine our understanding of the geometry of $\ms X\to\mc X$ over the curve $\mc D$.

\subsection{Residual curves and ghost components}
\label{sec:residual-curves}

The curve $\mc D$ introduced in Section \ref{sec:splitting} has a very special form: it is a tame Deligne-Mumford stack of dimension $1$ over a field $\kappa$, whose coarse moduli space $D=\cup D_i$ is an snc curve, and there are Zariski $\m_\ell$-gerbes $\mc D_i\to D_i$ such that $$\mc D\cong \mc D_1\times_D\cdots\times_D\mc D_n.$$ In particular, each component $\mc D_i$ is a Zariski $\m_\ell$-gerbe over a smooth Deligne-Mumford curve that has a divisor $\mc S_i$ supporting the entire locus with non-trivial automorphisms, and the reduced structure on $\mc S_i$ makes it isomorphic to $\B\m_\ell\times S_i$ for some finite \'etale $\kappa$-scheme $S_i$. In particular, the residual gerbes of $\mc D_i$ are isomorphic to $\B\m_{\ell, L}$ or $\B\m_{\ell, L}\times\B\m_{\ell, L}$ for $L$ a finite separable extension of $\kappa$. Curves like $\mc D$ are called \emph{residual curves\/} in \cite{paiitbgoaas}, and they are precisely the curves that split the residues of Brauer classes on (suitable birational models of) surfaces.

\begin{notation}
We will write $\kappa_i$ for $\Gamma(\mc D_i,\ms O)$; each $\kappa_i$ is $\ell$-primary (Notation \ref{N:prim}).
\end{notation}

As explained in Section \ref{sec:gerbe}, we also have a $\m_\ell$-gerbe $\ms D_i\to\mc D_i$ parametrizing the restriction of the extension of our Brauer class $\alpha$. This class gives rise to Brauer classes over the residual gerbes. 
\begin{notation}
The calculation of the Brauer group of $\B\m_\ell$ (see Section 4 of \cite{paiitbgoaas}) associates to each $\ms D_i\to \mc D_i$ a cyclic extension that we will always write as $R_i\to D_i$. (This is in fact the same as the classical ramification extension when thinking of $D$ as the ramification divisor of $\alpha$.)
\end{notation}

\begin{defn}
A component $D_i$ is a \emph{ghost component\/} if the extension $R_i\to D_i$ induced by a cyclic extension of $\kappa_i$.
\end{defn}
Equivalently, a ghost component is one whose ramification extension is geometrically trivial.

There is a sheaf-theoretic characterization of ghost components as follows. Call a sheaf $G$ on $\mc D_i$ \emph{isotypic of type $c$\/} if its restriction to the generic gerbe $G|_{\B\m_{\ell,\kappa(D_i)}}$ is isomorphic to the sheaf on $\B\m_{\ell,\kappa(D_i)}$ associated to a representation of the form $(\chi^{\tensor c})^{\oplus N}$, where $\chi:\m_\ell\to\G_m$ is the natural inclusion character.

\begin{lem}\label{L:ghost-sheaf}
A component $D_i$ is a ghost component if and only if for every locally free $\ms D_i$-twisted sheaf $F$ and every algebraically closed extension $\kappa_i\subset K$, the sheaf $F\tensor_{\kappa_i}K$ admits a direct sum decomposition
$$F\cong 	F_0\oplus\cdots\oplus F_{\ell-1}$$
such that for every pair $a, b\in\{0,\ldots,\ell-1\}$, the sheaf $\pi_\ast\shom(F_a, F_b)$ is isotypic of type $b-a$. Moreover, such a decomposition is unique up to reordering the summands and applying summand-wise isomorphisms.
\end{lem}
\begin{proof}
This is proven in Lemma 5.5 and Proposition 5.1.8 of \cite{paiitbgoaas}.
\end{proof}

\begin{notation}\label{N:eigendecomp}
A decomposition as in Lemma \ref{L:ghost-sheaf} is called an \emph{eigendecomposition\/}.
\end{notation}

\subsection{Numerical consequences of Section \ref{sec:even-ram} when $\ell=2$}
\label{sec:numerical}
In this section we explain some intersection-theoretic consequences of the ramification configuration created in Section \ref{sec:even-ram} in the case of $\ell=2$.
\begin{remark}
This is the only example we know of at the moment where (1) the knowledge that a divisor is the ramification divisor of a Brauer class, and (2) the knowledge of which components disappear from the ramification of the class over the algebraic closure of the base field, together yield intersection-theoretic consequences for the underlying divisor.
\end{remark}

\begin{prop}\label{P:parity}
  If $D_i$ is a ghost component then both $D_i^2$ and $D_i\cdot R$ are even.
\end{prop}
The parity is computed by viewing the intersection as a scheme over the field $\kappa_i$, the field of constants of $D_i$, not over the original base field. That is to say, if $D_i$ is defined over a quadratic extension, this does not mean that every intersection number is even by default.
\begin{proof}
  The proof breaks into two subcases:
  $D_i\subset S$ and $D_i\subset R$.  In the latter case we already
  know that $D_i$ is a $(-2)$-curve, and that $D_i^2=D_i\cdot R$.  Thus, we will assume for the
  rest of this proof that $D_i\subset S$.

  Let $\{r_1,\ldots,r_a\}=D_i\cap(\cup_{j\neq i}D_j)$.  By the
  reduction of Section \ref{sec:even-ram}, each $r_j$ is a scalding
  point, so that the restriction of $L$ to $r_j$ is trivial.  Since
  $L$ is a pullback from $k_i$, we see that each residue field
  $\kappa(r_j)$ has even degree over $k_i$.  In particular, we immediately see that $D_i\cdot R$ is even.  

It remains to show that $D_i^2$ is even.  Write $\mc D_i=[\ms
  O_{D_i}(D_i)]^{1/2}\times_{D_i}\mc C_i$, where $\mc C_i\to D_i$ is
  the root construction applied to $D_i\cap R$ and $[\ms
  O_{D_i}(D_i)]^{1/2}$ is the stack of square-roots of $\ms
  O_{D_i}(D_i)$ (i.e., the gerbe representing the image of $\ms
  O_{D_i}(D_i)$ under the Kummer boundary map $\H^1(D_i,\G_m)\to\H^2(D_i,\m_2)$).  By class field theory
  and the fact that each $r_j$ has even degree over $k_i$, we know
  that there is a Brauer class $\beta\in\Br(\mc C_i)$ whose
  ramification extension over each $r_j$ is non-trivial.  
  \begin{lem}
    With the immediately preceding notation, there is a class
    $\gamma\in\Br([\ms O_{D_i}(D_i)]^{1/2})[2]$ such that
    $\alpha-\beta_{\mc D_i}=\gamma_{\mc D_i}$.
  \end{lem}
  \begin{proof}
    The Leray spectral sequence for the projection morphism $\mc
    D_i\to[\ms O_{D_i}(D_i)^{1/2}]$ yields an exact sequence 
$$0\to\Br([\ms O_{D_i}(D_i)]^{1/2})\to\Br(\mc
D_i)\to\bigoplus_j(\kappa(r_j)^\times\tensor\Z/2\Z)\oplus\Z/2\Z$$
in which the rightmost map is the sum of the projections to the second
two factors in the natural decompositions
$\Br(\xi_j)\simto\kappa(r_j)^\times\tensor\Z/2\Z\oplus\kappa(r_j)^\times\tensor\Z/2\Z\oplus\Z/2\Z$.
By assumption, for each $j$ the third projection of $\alpha$ (the secondary
ramification) is trivial, while the second projection (``primary
ramification along the other branch'') is the same for $\alpha$ and
$\beta$.  Thus, the difference $\alpha-\beta_{\mc D_i}$ lies in the
image of $\Br([\ms O_{D_i}(D_i)]^{1/2})$, as desired.
  \end{proof}
Since $\alpha$ is
  ramified along $D_i$, the class $\gamma$ must be non-zero.  This
  gives us numerical information about $D_i^2$, as the following lemma
  shows.

\begin{lem}\label{L:parity}
  Suppose $f:\ms C\to C$ is a $\m_2$-gerbe on a proper smooth curve over
  a finite or PAC field $\kappa$.  If $$\ker(\Br(\ms C)\to\Br(\ms
  C\tensor\widebar\kappa))\neq 0$$ then the image of $[\ms C]$ under the
  degree map $\H^2(C,\m_2)\to\Z/2\Z$ is $0$.  In particular, over
  $\widebar\kappa$ there is a invertible $\ms C$-twisted sheaf $N$
  such that $N^{\tensor 2}\cong\ms O$.  Finally, any invertible $\ms C$-twisted
  sheaf $N$ has the property that $$f_\ast(N^{\tensor 2})\in\Pic(C)$$ has even degree.
\end{lem}
Note that the pullback map $f^\ast:\Pic(C)\to\Pic(\ms C)$ is injective, so that the last statement really means that $N^{\tensor 2}$ can be canonically identified with an invertible sheaf on $C$, and this sheaf has even degree (over the field $\kappa$).
\begin{proof}
  The Leray spectral sequence shows that the kernel in question is
  isomorphic to the kernel of the edge map $\H^1(\spec\kappa,\Pic_{\ms
    C/\kappa})\to\H^3(\spec\kappa,\G_m)$.  Thus, we
  certainly must have that $\H^1(\spec\kappa,\Pic_{\ms C/\kappa})\neq
  0$.  The degree map defines an exact sequence
$$0\to\Pic^0_{\ms C/\kappa}\to\Pic_{\ms C/\kappa}\to\Z\to 0,$$
from which we deduce that $\H^1(\spec\kappa,\Pic^0_{\ms C/\kappa})\neq
0$.  By Lang's Theorem (resp.\ the PAC property), this is only possible if the group scheme $\Pic^0_{\ms
  C/\kappa}$ is disconnected, which implies that there is an
invertible $\ms C\tensor\widebar\kappa$-twisted sheaf of degree $0$. This gives the first statement of the Lemma by Proposition 3.1.2.1(iv) of \cite{period-index-paper}.  
Since any degree $0$ invertible sheaf on $C\tensor\widebar\kappa$ is a
square, the second statement of the Lemma follows.  The final
statement follows from the fact that any two invertible $\ms
C$-twisted differ by an invertible sheaf on $C$, so that their squares
differ by a square.  Since there is one whose square has degree $0$
(over $\widebar k$), we conclude that they all have squares of even degree.
\end{proof}

Consider the sheaf $\ms O_{\mc D_i}(\mc D_i)$.  This is an invertible
$\mc D_i$-twisted sheaf, and we conclude from Lemma \ref{L:parity}
that its square has even degree.  But its square is isomorphic to $\ms
O_{D_i}(D_i)|_{\mc D_i}$, so it has degree $D_i^2$, completing the
proof of Proposition \ref{P:parity}.
\end{proof}

\begin{cor}\label{C:ghost-det}
For each ghost component $D_i\subset S$ we have that $D_i\cdot S$ is even.
\end{cor}
\begin{proof}
By Corollary \ref{C:ghost-dis}, no two ghost components of $S$ intersect. The result then follows from Proposition \ref{P:parity}.
\end{proof}

\section{Stacks of twisted sheaves}
\label{sec:twisted-stacks}

The purpose of this section is to introduce the general moduli problems that we will study in the sequel. Given a
closed substack $Y\to\mc X$, let $\ms Y\to Y$ denote the pullback
$Y\times_{\mc X}\ms X$.  The uniformization $u$ of the previous
section induces a uniformization $Z\times_{\ms X}\ms Y\to\ms Y$ by a
projective scheme.  Fix an invertible sheaf $\ms N$ on $\ms Y$.

\begin{defn}
  The stack of $\ms Y$-twisted sheaves of rank $r$ and determinant $\ms N$
  will be denoted $\ms M_{\ms Y}(r,\ms N)$.
\end{defn}

The representation theory of the stabilizers of $\mc X$ puts natural
conditions on the sheaf theory of $\ms X$.  We distinguish a weak
condition that will be important in what follows.  First, recall that
the root construction canonically identifies a singular residual gerbe of $\mc
D$ with residue field $L$ with $\B\m_{\ell,L}\times\B\m_{\ell,L}$.
The two resulting maps
$\B\m_{\ell,L}\to\B\m_{\ell,L}\times\B\m_{\ell,L}$ arising from the
inclusion of the factor groups will be called the \emph{distinguished maps\/}.
Given an
algebraically closed field $\kappa$, we will say that a representable
morphism $x:\B\m_{\ell,\kappa}\to\mc X$ is a \emph{distinguished
  gerbe\/} if the image of $x$ lies in the smooth locus of
$\ms D$ or if $x$ factors through a distinguished map to a singular
residual gerbe of $\mc D$.

Given a distinguished gerbe $x:\B\m_{\ell,\kappa}\to\mc D$,
the pullback $\ms X_x$ has trivial cohomology class, so that there is
an invertible $\ms X_x$-twisted sheaf $\ms L$.

Now let $\mc S$ be an inverse limit of open substacks of a fixed closed substack of $\mc X$.
Write $\ms S=\ms X\times_{\mc X}\mc S$.  (The relevant examples: open
subsets of $\mc X$, open subsets of $\mc D$, and generic points of
components of $\mc D$.)  A distinguished gerbe of $\mc S$ is
a distinguished gerbe of $\mc X$ factoring through all open
substacks in the system
defining $\mc S$.

\begin{defn}\label{D:reg}
  A coherent $\ms S$-twisted sheaf $\ms V$ is 
  \begin{itemize} 
  \item \emph{regular\/} if for all
  distinguished gerbes $x$ of $\mc S$, the sheaf $\ms V_{\ms
    X_x}\tensor\ms L^\vee$ is the coherent sheaf associated to a
  direct sum $\rho^{\oplus m}$ for some $m$, where $\rho$ is the
  regular representation of $\m_\ell$;
  \item \emph{biregular\/} if $\ms V$ is regular and if for all geometric residual gerbes $\widebar{\xi}\to\mc S$ and all
invertible sheaves $L$ on $\widebar{\xi}$, we have that $\ms V$ and $\ms
V\tensor L$ are isomorphic;
\item \emph{happily biregular\/} if it is biregular and for all residual gerbes $\xi\to\ms S$, the restriction $\ms V|_\xi$ has the property that the group-scheme $\Aut_0(\ms V|_\xi)$ parametrizing determinant-preserving automorphisms is geometrically connected over $\Gamma(\xi,\ms O)$.
\end{itemize}
\end{defn}

\begin{remark}
The term ``geometric residual gerbe'' in the definition of biregularity means that $\widebar{\xi}$ factors through an isomorphism with the basechange of a residual gerbe to an algebraically closed field containing its field of definition.
\end{remark}
\begin{remark}
If $\ms S$ is a $\m_\ell$-gerbe over $\B\m_\ell\times\B\m_\ell$
admitting an invertible twisted sheaf $\Lambda$, it is easy to check
that any biregular $\ms S$-twisted sheaf of rank $\ell^2$ is isomorphic to $\Lambda$
tensored with the regular representation of $\m_\ell\times\m_\ell$.
As a consequence, if $\ms S$ is a $\m_\ell$-gerbe over
$\B\m_\ell\times\B\m_\ell$ with geometrically trivial Brauer class
then there is exactly one isomorphism class of biregular $\ms
S$-twisted sheaves of rank $\ell^2$.
\end{remark}
\begin{remark}
  A locally free $\ms D$- or $\ms X$-twisted sheaf is (bi)regular if and
  only if its restriction to the singular residual gerbes of $\ms D$
  is (bi)regular.
\end{remark}

The following result on regular sheaves will be important in Section
\ref{sec:formal}.

\begin{lem}\label{L:regzar}
  Given $\ms S$ as in the paragraph preceding Definition \ref{D:reg} which
  is contained in $\ms X\setminus\Sing(\ms D)$, any two regular
  locally free $\ms S$-twisted sheaves $\ms V_1$ and $\ms V_2$ of the
  same rank $r$ are Zariski-locally isomorphic.
\end{lem}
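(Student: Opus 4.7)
The plan is to produce, near each point $p$ of $\mc S$, a section of the internal hom sheaf $\ms H := \shom_{\O_{\mc S}}(\ms V_1,\ms V_2)$ whose value at the residual gerbe of $p$ is an isomorphism, and then to invoke openness of the isomorphism locus for maps between locally free sheaves of equal rank.

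First I would observe that $\ms H$ is an untwisted locally free coherent sheaf of rank $r^2$ on $\mc S$, since the $\m_\ell$-twists of source and target cancel in the internal hom. The hypothesis $\ms S\subset\ms X\setminus\Sing(\ms D)$ implies that at each geometric point $p$ the residual gerbe of $\mc S$ is either trivial (off $\mc D$) or of the form $x:\B\m_{\ell,\kappa}\to\mc S$ factoring through a distinguished factor inclusion (on the smooth part of $\mc D$). In the first case, $\ms V_j|_x$ is an $\ms X_x$-twisted sheaf of rank $r$ on a $\m_\ell$-gerbe over a point, and any two such are isomorphic; in the second case, regularity forces $\ms V_j|_x\cong\ms L_x\tensor\rho^{\oplus r/\ell}$ for $\ms L_x$ an invertible $\ms X_x$-twisted sheaf and $\rho$ the regular representation of $\m_\ell$. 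Either way, $\ms V_1|_x\cong\ms V_2|_x$, so the space $\Hom_{\ms X_x}(\ms V_1|_x,\ms V_2|_x)$ contains an isomorphism $\phi_0$.

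The main step is to lift $\phi_0$ to a Zariski section of $\ms H$. Let $\pi:\mc S\to S$ denote the coarse moduli morphism. Since $\ell$ is invertible in $k$, $\mc S$ is a tame DM stack, so $\pi_\ast$ is exact on coherent sheaves and the formation of $\pi_\ast\ms H$ commutes with base change to residual gerbes; hence
\[
(\pi_\ast\ms H)\tensor_{\O_S}\kappa(\pi(p))\cong\Hom_{\ms X_x}(\ms V_1|_x,\ms V_2|_x),
\]
and $\phi_0$ defines an element of this fiber. Applying Nakayama's lemma to the finitely generated $\O_{S,\pi(p)}$-module $(\pi_\ast\ms H)_{\pi(p)}$ yields a lift to a section $\Phi$ over some Zariski neighborhood $U$ of $\pi(p)$, equivalently a morphism $\Phi:\ms V_1|_{\pi^{-1}(U)}\to\ms V_2|_{\pi^{-1}(U)}$ restricting to $\phi_0$ at $p$.

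Since $\Phi$ is a homomorphism of locally free $\ms S$-twisted sheaves of the same rank which is an isomorphism at $p$, its determinant does not vanish at $p$; on the open substack of $\pi^{-1}(U)$ complementary to the vanishing locus of this determinant, $\Phi$ is the desired Zariski-local isomorphism. The only technical subtlety is the base-change compatibility of $\pi_\ast$ at residual gerbes, which is a standard feature of tame DM stacks; the rest of the argument is an equivariant Nakayama-plus-open-iso-locus combination.
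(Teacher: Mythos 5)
Your overall structure matches the paper's: reduce to the residual gerbe over a closed point (via tameness and surjectivity/right-exactness of restriction), then use Nakayama and openness of the isomorphism locus to spread out. But there is a genuine gap at the pivotal step, where you write ``Either way, $\ms V_1|_x\cong\ms V_2|_x$, so the space $\Hom_{\ms X_x}(\ms V_1|_x,\ms V_2|_x)$ contains an isomorphism $\phi_0$.'' Everything you say to establish this isomorphism happens at a \emph{geometric} point: the distinguished residual gerbes and the regularity condition are, by definition, formulated over algebraically closed fields, and the identification $\ms V_j|_x\cong\ms L_x\tensor\rho^{\oplus r/\ell}$ (or, off $\mc D$, the uniqueness of twisted sheaves of given rank on a $\m_\ell$-gerbe over a point) is only guaranteed after base change to $\widebar\kappa$. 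To lift $\phi_0$ by Nakayama/right-exactness as in your next step, $\phi_0$ must lie in the fiber $(\pi_\ast\ms H)\otimes_{\O_S}\kappa(\pi(p))$ over the \emph{actual} residue field, not merely in $(\pi_\ast\ms H)\otimes\widebar{\kappa(\pi(p))}$. Your proof never produces a $\kappa(\pi(p))$-rational isomorphism.

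This is exactly the point where the paper does additional work that you have omitted. Once reduced to the residual gerbe $\xi$ over $\kappa$, the paper notes that regularity yields a $\widebar\kappa$-point of the open subscheme $\isom(\ms V_1,\ms V_2)\subset\hom(\ms V_1,\ms V_2)$ (an open subset of an affine space over $\kappa$), and then runs a rationality argument: if $\kappa$ is infinite, a nonempty open subvariety of affine space over an infinite field has a rational point; if $\kappa$ is finite, nonemptiness shows $\isom(\ms V_1,\ms V_2)$ is a torsor under $\aut(\ms V_1)$, which is open in an affine space and hence connected, so Lang's theorem gives a $\kappa$-point. Two twisted sheaves that become isomorphic after a field extension need not be isomorphic over the ground field without such an argument, so this descent step is essential; you should insert it between establishing the geometric isomorphism and invoking Nakayama.
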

\begin{proof}
  It suffices to prove the result when $\mc S$ is the preimage in $\mc
  X$ of the spectrum of a local ring $A$ of $X$ at a point disjoint
  from the singular locus of $D$.  Let $p\in\spec A$ be the closed
  point; the reduced fiber $\xi$ of $\ms S$ over $p$ is either isomorphic to
  $p$ or to $\B\m_{\ell,\kappa}$, where $\kappa$ is the residue field
  of $p$.

  Since $A$ is affine and $\ms X$ is tame, the restriction map
  $\hom(\ms V_1,\ms V_2)\to\hom(\ms V_1|_{\xi},\ms V_2|_{\xi})$ is
  surjective.  Moreover, by Nakayama's lemma we have that a map $\ms
  V_1\to\ms V_2$ is an isomorphism if and only if its restriction to
  $\xi$ is an isomorphism.  Thus, we are reduced to proving the result
  when $\ms S=\xi$, which we assume for the rest of this proof.  

  The regularity condition shows that the open subset $\isom(\ms
  V_1,\ms V_2)$ of the affine space $\hom(\ms V_1,\ms V_2)$ has a
  point over the algebraic closure of $\kappa$.  Since $\kappa$ is
  infinite (by the reductions in Section \ref{sec:reductions}) the result follows, as nonempty open subsets of affine
  spaces over infinite fields always have rational points.  (As an amusing aside: if
  $\kappa$ is finite, then the nonemptiness of the locus shows that $\isom(\ms
  V_1,\ms V_2)$ is a torsor under the algebraic group $\aut(\ms V_1)$.
  But this group is an open subset of an affine space and therefore 
  connected.  Lang's theorem implies that any torsor is trivial and
  thus there is an isomorphism defined over the base field $\kappa$ in this case as well.)
\end{proof}

It is a standard computation in $K$-theory that regularity is an open
condition in the stack of locally free $\ms X$-twisted sheaves.  We
will study certain stacks of regular $\ms X$-twisted sheaves in order
to prove Theorem \ref{T:part}.

\begin{defn}
  Given a sheaf $\ms F$ with determinant $\ms N$, an
  \emph{equideterminantal\/} deformation of $\ms F$ is a family $\mf
  F$ over $T$ with a fiber identified with $\ms F$ and a global
  isomorphism $\det\mf F\simto\ms N_T$ reducing to the given
  isomorphism $\det\ms F\simto\ms N$ on the fiber.
\end{defn}

Given an $\ms X$-twisted sheaf $\ms F$, let $\ext^i_0(\ms F,\ms F)$
denote the kernel of the trace map $\ext^i(\ms F,\ms F)\to\H^i(\ms
X,\ms O)$.  When $\ms F$ has rank relatively prime to $p$, the formation
of traceless $\ext$ is compatible with Serre duality, so that
$\ext^i_0(\ms F,\ms F)$ is dual to $\ext^{2-i}_0(\ms F,\ms
F\tensor\omega_{\ms X})$.  In particular, $\ext^2_0(\ms F,\ms F)$ is
dual to the space of traceless homomorphisms $\hom_0(\ms F,\ms
F\tensor\omega_{\ms X})$.

\begin{defn}
  An $\ms X$-twisted sheaf $\ms V$ is \emph{unobstructed\/} if $\ext^2_0(\ms
  V,\ms V)=0$.
\end{defn}

\begin{lem}\label{L:U}
  Given an invertible sheaf $\ms N$, the set of unobstructed
  torsion-free coherent $\ms X$-twisted sheaves of rank $\ell^2$ and
  determinant $\ms N$ is a smooth open substack $\ms U$ of the stack
  of all $\ms X$-twisted coherent sheaves of determinant $\ms N$.
\end{lem}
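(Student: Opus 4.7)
My plan is to verify three ingredients: torsion-freeness is an open condition on flat families, the condition $\ext^2_0(\ms V,\ms V)=0$ is open, and the combined locus is smooth.

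Openness of torsion-freeness is classical; for an $\ms X$-twisted family it can be checked by pulling back along the finite flat projective uniformization $u:Z\to\ms X$ from Section~\ref{sec:gerbe} and applying the usual statement on the smooth projective surface $Z$.

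Openness of unobstructedness uses that the rank $\ell^2$ is invertible in $k$ (since $\ell\neq p$). The map $\alpha\mapsto\ell^{-2}\alpha\cdot\id_{\ms V}$ from $\H^2(\ms X,\ms O)$ to $\ext^2(\ms V,\ms V)$ is a section of the trace, so that in each degree $i$ one obtains a natural direct-sum decomposition
\[
\ext^i(\ms V,\ms V)=\ext^i_0(\ms V,\ms V)\oplus\H^i(\ms X,\ms O).
\]
Hence in a flat family $\mf V/T$ the function $t\mapsto\dim\ext^2_0(\mf V_t,\mf V_t)$ equals the upper-semicontinuous function $\dim\ext^2(\mf V_t,\mf V_t)$ minus the constant $\dim\H^2(\ms X,\ms O)$, and its vanishing locus is therefore open. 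The same splitting in degree one shows that the trace $\ext^1(\ms V,\ms V)\to\H^1(\ms X,\ms O)$ is surjective, a fact I will use in the smoothness step.

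Smoothness then follows from a standard deformation-theoretic calculation on the tame proper Deligne--Mumford stack $\ms X$. Let $A'\surj A$ be a small extension of Artinian local $k$-algebras with kernel $I$, let $\mf V$ be an $\ms X_A$-twisted sheaf equipped with an isomorphism $\det\mf V\simto\ms L_A$, and fix the obvious lift $\ms L_{A'}$. The obstruction to extending $\mf V$ over $\ms X_{A'}$ lives in $\ext^2(\ms V,\ms V)\tensor_k I$, and its image under trace is the obstruction to lifting $\det\mf V$, which vanishes because $\ms L_{A'}$ is already given. The obstruction therefore lies in $\ext^2_0(\ms V,\ms V)\tensor_k I$, which is zero by hypothesis, so a lift $\mf V'$ exists. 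Its determinant differs from $\ms L_{A'}$ by a class in $\H^1(\ms X,\ms O)\tensor_k I$, and by the degree-one surjectivity established above we may twist $\mf V'$ by an element of $\ext^1(\ms V,\ms V)\tensor_k I$ to obtain the desired equideterminantal lift. This gives formal smoothness at every unobstructed point $\ms V$, and combined with the local finite presentation of the moduli stack of $\ms X$-twisted coherent sheaves of fixed determinant yields smoothness.

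There is no serious obstacle; the only items requiring attention are the equideterminantal bookkeeping, handled uniformly by the rank-invertible splittings in degrees one and two, and the verification that standard semicontinuity and infinitesimal deformation theory apply to twisted sheaves on $\ms X$, both of which reduce via $u^\ast$ to classical statements on the scheme $Z$.
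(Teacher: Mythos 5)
Your proof is correct and follows the same line as the paper: both use the invertibility of $\ell^2$ to split the trace, deduce openness of the unobstructedness condition from (semi)continuity of the resulting traceless part, and establish smoothness via the equideterminantal obstruction theory. You unroll the paper's citation of Artin's obstruction-theory formalism into an explicit small-extension argument and also spell out the openness of torsion-freeness, but these are elaborations of the same argument rather than a different route.
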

\begin{proof}
  Since $\ell^2$ is invertible in $k$, given a $k$-scheme $T$ and a
  $T$-flat family of coherent $\ms X$-twisted sheaves $\ms V$ on $\ms
  X\times T$, the trace map $\R(\pr_2)_\ast\rshom(\ms V,\ms
  V)\to\R(\pr_2)_\ast\ms O_{\ms X\times T}$ splits, so that there is a
  perfect complex $\ms K$ on $T$ with $\R(\pr_2)_\ast\rshom(\ms V,\ms
  V)\cong\R(\pr_2)_\ast\ms O_{\ms X\times T}\oplus\ms K$.  A fiber
  $\ms V_t$ is unobstructed if and only if the derived base change
  $\ms K_t$ has trivial second cohomology.  By cohomology and base
  change, there is an open subscheme $U\subset T$ such that for all
  $T$-schemes $s:T'\to T$, we have that $\ms H^2(\mathbf L
  s^\ast\R(\pr_2)_\ast\ms K)=0$ if and only if $s$ factors through
  $U$.  These $U$ define the open substack $\ms U$ of unobstructed
  twisted sheaves.

  The smoothness of $\ms U$ is a consequence of the fact that the
  association $\ms V\rightsquigarrow\ext^2_0(\ms V,\ms V)$ is an obstruction
  theory in the sense of \cite{MR0260746} for the moduli problem of
  equideterminantal deformations, and the fact that trivial obstruction
  theories yield smooth deformation spaces.
\end{proof}

\begin{lem}\label{L:U-nonempty}
  The stack $\ms U$ of Lemma \ref{L:U} contains a point $[\ms V]$ such
  that the quotient $\ms V^{\vee\vee}/\ms V$ is the pushforward of an
  invertible twisted sheaf supported on a finite reduced closed
  substack of $\ms X\setminus\ms D$.  In
  particular, $\ms U$ is nonempty.
\end{lem}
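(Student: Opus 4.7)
Since a torsion-free $\ms X$-twisted sheaf of generic rank $\ell^2$ already witnesses $\ind(\alpha)\mid\ell^2$, constructing $[\ms V]$ is essentially equivalent to Theorem \ref{T:main}, and will necessarily draw on most of the machinery of the remainder of the paper. The plan is to exhibit $\ms V$ as an elementary modification of a locally free biregular $\ms X$-twisted sheaf $\ms V_0$ of rank $\ell^2$. Following the outline in Section \ref{sec:outline}, I would produce such a $\ms V_0$ in four stages: (i) construct a biregular locally free $\ms D$-twisted sheaf $\ms W$ of rank $\ell^2$ by appealing to the period-index theorem on the residual stacky curve $\mc D$, as in \cite{paiitbgoaas} and Section \ref{sec:existence-d}; (ii) deform $\ms W$ one infinitesimal thickening at a time along the formal completion of $\ms X$ at $\ms D$, with the obstructions in $\ext^2$ controlled by the regularity of $\ms W$ at the distinguished residual gerbes and standard vanishing on the components of $\ms D$ (Section \ref{sec:formal}); (iii) algebraize the resulting formal twisted sheaf via Grothendieck existence for the proper stack $\ms X$; (iv) extend across $\ms X\setminus\ms D$, using that reflexive sheaves on the smooth two-dimensional stack $\ms X$ are locally free (Section \ref{sec:nonempty}).

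Given such $\ms V_0$, the passage to $\ms V$ is formal. After tensoring with a line bundle pulled back from $\mc X$, one may assume $\det\ms V_0\cong\ms L\otimes\ms O_{\mc X}(\Xi)$ for $\ms L$ our choice of target determinant and $\Xi$ a reduced effective $0$-cycle on $\mc X\setminus\mc D$. At each $x\in\Xi$ the restriction of $\ms X$ to an \'etale neighborhood is a trivial $\m_\ell$-gerbe, so one can select an invertible twisted quotient $\ms L_x$ supported at the residual gerbe $\xi_x\subset\ms X$. Let $\ms V$ be the kernel of the resulting global surjection $\ms V_0\twoheadrightarrow\iota_\ast\bigl(\bigoplus_x\ms L_x\bigr)$. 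Then $\ms V$ is torsion-free of rank $\ell^2$ with $\det\ms V=\ms L$, its double dual is $\ms V_0$, and $\ms V^{\vee\vee}/\ms V$ is the pushforward of an invertible twisted sheaf on the reduced finite substack $\bigsqcup_x\xi_x\subset\ms X\setminus\ms D$, as required.

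To place $[\ms V]$ in $\ms U$, verify $\ext^2_0(\ms V,\ms V)=0$. By Serre duality (valid since $\ell^2$ is prime to $p$), this amounts to the vanishing of traceless maps $\ms V\to\ms V\otimes\omega_{\ms X}$. The defining sequence $0\to\ms V\to\ms V_0\to\iota_\ast\bigoplus_x\ms L_x\to 0$ and its twist by $\omega_{\ms X}$ reduce the question to a bound on traceless endomorphisms of $\ms V_0$ together with some local contributions at $\Xi$; by taking $|\Xi|$ large and choosing the local surjections $\ms V_0\to\ms L_x$ generically, the discriminant of $\ms V$ can be driven upward until a Bogomolov-type inequality forces $\mu$-stability of $\ms V$ with respect to an appropriate polarization of $\ms X$, at which point the desired vanishing follows. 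The main obstacle is the construction of the locally free $\ms V_0$ in steps (i)--(iv) above: this is the subtle formal-deformation-and-algebraization problem advertised in Section \ref{sec:outline}, and the bulk of the remaining sections of the paper is devoted to solving it.
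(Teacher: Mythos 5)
The paper's proof of this lemma is far more elementary than what you propose, and your route has both a circularity within the paper's architecture and a concrete gap at the end.

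\textbf{The lemma is weaker than you think.} You open by asserting that producing $[\ms V]$ is ``essentially equivalent to Theorem~\ref{T:main}.'' It is not: $\ms U$ is a stack, and ``contains a point'' means a \emph{geometric} point, i.e., a point over some field extension of $k$. A $\bar k$-point of $\ms U$ says nothing about $\ind(\alpha)$ over $k$. Over $\bar k$, de Jong's theorem \cite{MR2060023} already gives $\ind(\alpha_{\bar k(X)})=\ell$, so a torsion-free $\ms X\tensor\bar k$-twisted sheaf of rank $\ell^2$ and any prescribed determinant is immediate (take an $\ell$-fold direct sum of a generic rank-$\ell$ twisted sheaf, extend to a torsion-free coherent sheaf, and twist by a line bundle). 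The only content of the lemma is the unobstructedness and the prescribed shape of $\ms V^{\vee\vee}/\ms V$, and the paper gets both at once from one mechanism.

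\textbf{The paper's actual argument.} Starting from such a $\ms V$, the paper applies the O'Grady--Mukai ``sliding'' trick: for a general closed point $x\in X\setminus D$ and a general length-$1$ quotient $\ms V_x\to Q$, the kernel $\ms W\subset\ms V$ satisfies $\hom_0(\ms W,\ms W\tensor K_X)\subsetneq\hom_0(\ms V,\ms V\tensor K_X)$, because a traceless endomorphism cannot preserve every codimension-one subspace of a general fiber. Iterating strictly decreases $\dim\hom_0$ until it vanishes; by Serre duality (using $p\nmid\ell$) this kills $\ext^2_0$. Each step manufactures exactly the singularity type in the statement: $\ms V^{\vee\vee}/\ms V$ is a direct sum of invertible twisted sheaves on reduced points of $\ms X\setminus\ms D$. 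The details are delegated to Lemma~\ref{L:sliding-cok}.

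\textbf{Where your proposal goes wrong.} First, steps (i)--(iv) of your plan invoke the whole of Sections~\ref{sec:existence-d}--\ref{sec:nonempty}, which come \emph{after} this lemma; Proposition~\ref{P:nonempt} in particular is precisely the subtle global lifting theorem the paper is still building toward. Using it here would be circular, and it is also unnecessary: no biregularity or control of $\ms V|_{\ms D}$ is asked for. Second, and more seriously, your final step proposing to get $\ext^2_0(\ms V,\ms V)=0$ by ``driving the discriminant up until a Bogomolov-type inequality forces $\mu$-stability'' does not work. Raising $c_2$ by chopping off length-one quotients does not make a sheaf $\mu$-stable (e.g., a destabilizing subsheaf survives such modifications if supported away from it), and even granting $\mu$-stability, that only yields $\hom_0(\ms V,\ms V\tensor K_X)=0$ when the slopes satisfy $\mu(\ms V)>\mu(\ms V\tensor K_X)$, i.e., $\deg K_X<0$; for a general surface with $K_X$ effective this fails. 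The vanishing must come from the direct dimension-count of the sliding argument, not from a stability/Bogomolov bound.
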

\begin{proof}
  This works just as in the classical case.  Let $x\in X\setminus D$ be a general
  closed point.  Serre duality shows that $\ext^2_0(\ms V,\ms V)$ is
  dual to the space $\hom_0(\ms V,\ms V\tensor K_X)$ of traceless
  homomorphisms.  Taking a general length $1$ quotient $\ms V_x\to Q$
  yields a subsheaf $\ms W\subset\ms V$ such that $\hom_0(\ms W,\ms
  W\tensor K_X)\subsetneq\hom_0(\ms V,\ms V\tensor K_X)$.  The reader
  is referred to the first paragraph of the proof of Lemma \ref{L:sliding-cok}
  below for more details.
\end{proof}

\begin{lem}\label{L:cover-bdd}
  The open substack $\ms U(c,N)$ parametrizing unobstructed $\ms
  X$-twisted sheaves $\ms F$ of trivial determinant such that $\deg
  c_2(u^\ast\ms F)=c$ and $r(u^\ast\ms F)\leq N$ is of finite type
  over $k$.
\end{lem}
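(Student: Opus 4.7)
The plan is to transport the boundedness question to the smooth projective surface $Z$ via the finite flat uniformization $u\colon Z\to\ms X$, apply classical boundedness on $Z$, and then descend along $u$. Both numerical invariants defining $\ms U(c,N)$ are read off from the pullback to $Z$, so this strategy is natural.

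First, I would verify that for every $\ms F\in\ms U(c,N)$, the pullback $u^\ast\ms F$ has Hilbert polynomial on $Z$ (with respect to a fixed ample polarization) independent of $\ms F$. Indeed, $u^\ast\ms F$ has rank $\ell^2$, determinant $u^\ast\ms O_{\ms X}\cong\ms O_Z$, and $\deg c_2(u^\ast\ms F)=c$, so its Chern character is constant and Riemann-Roch pins down the Hilbert polynomial. The hypothesis $r(u^\ast\ms F)\leq N$ says exactly that $u^\ast\ms F$ is $N$-Castelnuovo-Mumford regular on $Z$. By the Grothendieck-Kleiman boundedness theorem, such coherent sheaves on $Z$ form a bounded family, parameterized by a single $\Quot$-scheme of finite type over $k$.

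For the remaining step I would descend this boundedness along $u$. Since $u$ is finite and faithfully flat and trivializes the gerbe $\ms X$, the functor $u^\ast$ sends coherent $\ms X$-twisted sheaves to ordinary coherent sheaves on $Z$ and induces a morphism from the stack of coherent $\ms X$-twisted sheaves to $\Coh(Z)$ that is relatively representable and of finite type: the fiber over a sheaf $G\in\Coh(Z)$ parameterizes descent data for $G$ along $u$, cut out as a locally closed subscheme of $\isom_{Z\times_{\ms X}Z}(\pr_1^\ast G,\pr_2^\ast G)$, which is of finite type because $Z\times_{\ms X}Z$ is finite over $Z$. Combined with the first step, this realizes $\ms U(c,N)$ as the image of a finite-type $k$-scheme under a smooth surjection, proving the lemma. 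I expect the most delicate point to be verifying relative representability of the fibers of $u^\ast$, but this is a formal consequence of flat descent along the finite flat cover $u$ and of the fact that $\ms X$ is tame; no new geometric input is needed.
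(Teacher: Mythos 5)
Your proposal is correct and follows essentially the same strategy as the paper: transport the problem to the projective uniformization $Z$ via $u^\ast$ (bounded Castelnuovo--Mumford regularity plus fixed Chern data give a bounded family there, by the Grothendieck--Kleiman theorem, SGA~6 Th\'eor\`eme XIII.1.13) and then descend along the finite flat cover $u$. The paper compresses the descent step into a citation of Section~3.2 of \cite{orbi}; you have correctly reconstructed the relevant mechanism (the fiber of $u^\ast$ over a sheaf on $Z$ is a locally closed subscheme of a finite-type $\Isom$-scheme over $Z\times_{\ms X}Z$, which is finite over $Z$).
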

\begin{proof}
  By the methods of Section 3.2 of \cite{orbi}, this is reduced to the
  same statement on $Z$, where this follows from Th\'eor\`eme
  XIII.1.13 of \cite{MR0354655}.
\end{proof}

\section{Existence of $\ms D$-twisted sheaves}
\label{sec:existence-d}

In this section we start the bootstrapping process that will yield the proof of Theorem \ref{T:part} by proving that there are suitable twisted sheaves supported on $\ms D$. Much of the theory in this section is an outgrowth of the theory developed in Sections 4 and 5 of \cite{paiitbgoaas}. However, the results there are inadequate for our purposes when $\ell=2$, so we have recast some of them in a more flexible way here, in addition to proving the additional results needed for the even case. In an attempt to balance exposition with efficiency, we have tried to make the underlying ideas clear while referring to specific proofs in \cite{paiitbgoaas} when they can be dropped in here verbatim (or almost verbatim).

\subsection{Statement of the main result}
The goal of this section is the following.

\begin{thm}\label{T:sheaf-on-D}
There is an invertible sheaf $\ms N$ on $\mc X$ and a biregular $\ms D$-twisted sheaf of rank $\ell^2$ and determinant $\ms N|_{\mc D}$.
\end{thm}
The choice of $\ms N$ will depend upon the parity of $\ell$. This choice could be made uniform, but there are a few subtle cohomological implications of existence results with different determinants. We will not discuss those here, but we wish the record to reflect the more flexible version of the results for potential future users.

Before attacking Theorem \ref{T:sheaf-on-D}, we review and update some of the material of Sections 4 and 5 of \cite{paiitbgoaas}. As we will see, a single method works for all values of $\ell$, but the case of $\ell=2$ introduces one essential complexity related to the determinant.

\subsection{Biregular twisted sheaves over singular residual gerbes}
\label{sec:biregular}
We recall a fundamental result proved in Section 4 of \cite{paiitbgoaas}, recasting results of Saltman described in  \cite{saltman_cyclic_2007}. Write $\xi=\B\m_\ell\times\B\m_\ell$, over an $\ell$-primitive field $L$ (see Notation \ref{N:prim}).

\begin{remark}[Remark on hypotheses]
Section 4 of \cite{paiitbgoaas} assumes that the field in question is finite, while we work with $\ell$-primitive fields. The arguments carry over unchanged; the key properties that make the proofs work in Section 4 of \cite{paiitbgoaas} are the fact that $\Br(L)=0$ and that $\H^1(\spec L, \m_\ell)=\Z/\ell\Z$, both of which are ensured by the $\ell$-primitive hypothesis. 
\end{remark}

\begin{prop}[Modified Proposition 4.3.7 of \cite{paiitbgoaas}]\label{P:bireg}
There is a canonical isomorphism of groups
$$\Br(\xi)\simto (L^\times/(L^\times)^\ell)^2\times\m_\ell(L)\cong\Z/\ell\Z\times\Z/\ell\Z\times\m_\ell(L).$$
Moreover, given a $\m_\ell$-gerbe $\ms X\to\xi$ parametrizing a Brauer class $(A, B, \gamma)$ under this isomorphism, 
\begin{enumerate}
\item any two biregular $\ms X$-twisted sheaves of rank $\ell^2$ are isomorphic;
\item the biregular $\ms X$-twisted sheaf of rank $\ell^2$ is happily biregular (Definition \ref{D:reg}) if and only if $\gamma=1$.
\end{enumerate}
\end{prop}
\begin{proof}
As described in Section 4.3 of \cite{paiitbgoaas}, biregular sheaves with Brauer class $(A,B,\gamma)$ correspond to pairs of operators $\alpha$ and $\beta$ on a vector space $V$ such that 
\begin{enumerate}
\item $\alpha^\ell = A$, $\beta^\ell = B$, and $\alpha\beta=\gamma\beta\alpha$ as endomorphisms of $V$;
\item over $\widebar{L}$ with chosen elements $A^{1/\ell}$ and $B^{1/\ell}$, the operators $\frac{1}{A^{1/\ell}}\alpha$ and $\frac{1}{B^{1/\ell}}\beta$, viewed as actions of $\m_\ell$, give multiples of the regular representation of $\m_\ell$.
\end{enumerate}
When $\gamma\neq 1$ (the case called ``cold'' by Saltman in \cite{saltman_cyclic_2007}), an isomorphism between the cyclic algebra $(A,B)_\gamma$ and $\M_\ell(L)$ (which exists because, by assumption, $L$ has trivial Brauer group) gives a biregular twisted sheaf $V$ of rank $\ell$. Moreover, the endomorphism ring of this sheaf is identified with the center of $(A,B)_\gamma$, which is simply scalars -- that is, this sheaf is geometrically simple. The Skolem-Noether theorem implies that this is in fact the only biregular twisted sheaf of rank $\ell$, up to isomorphism. Since $\ell$ is invertible in $L$, the category of coherent sheaves is semisimple, and thus $V^{\oplus\ell}$ is the only biregular twisted sheaf of rank $\ell^2$. 

The automorphism group scheme is identified with $\GL_\ell$, but action of a matrix $M$ on the determinant of $V$ is by the $\ell$th power of the determinant of $M$. That is, the group scheme parametrizing determinant-preserving automorphisms is never geometrically connected for cold gerbes.

On the other hand, if $\gamma=1$, the algebra $L[x,y]/(x^\ell-A, y^\ell-B)$ admits an action as described, giving a biregular twisted sheaf $W$ of rank $\ell^2$. Extending scalars to $\widebar L$, the operators $x/A^{1/\ell}$ and $y/B^{1/\ell}$ make $W$ isomorphic to the regular representation of $\m_\ell\times\m_\ell$. This replacement is equivalent to choosing a trivialization of the Brauer class over $\widebar L$ (i.e., choosing $\ell$th roots for $A$ and $B$). The automorphisms of the regular representation that preserve the determinant are isomorphic to $\G_m^{\ell^2-1}$, which is geometrically connected, making $W$ happily biregular, as desired. Any biregular twisted sheaf in this situation is isomorphic to $W$: they are isomorphic over $\widebar L$, and the isomorphisms are a torsor under a geometrically connected group scheme, which must have a point over $L$ (since $L$ is PAC by the $\ell$-primitive assumption).
\end{proof}

\subsection{Uniform twisted sheaves and their moduli}
\label{sec:uniform-sheaves}
Much of this section is a streamlined and updated form of the relevant material in Section 5 of \cite{paiitbgoaas}

Fix a regular locally free $\ms D$-twisted sheaf $\ms V$ of rank $\ell^2$. Write $\mc V_i$ for $\ms V|_{\ms D_i}$. 

\begin{defn}
The sheaf $\ms V$ is \emph{uniform\/} if for each ghost component $D_i$, the sheaf $\ms V|_{\ms D_i\tensor_{\kappa_i}\widebar\kappa_i}$ admits an eigendecomposition (see Notation \ref{N:eigendecomp}) $F_0\oplus \cdots F_{\ell-1}$ in which each component has rank $\ell$, and all sheaves $\pi_\ast\shom(F_a,F_b)$ have degree $0$.
\end{defn}
That is, each component of the eigendecomposition has ``the same degree'' (without having to quibble about the definition of degrees of sheaves on gerbes). The uniform condition is a natural one to impose, since if one wants to find a twisted sheaf over the base field $\kappa$ of $D$, it must be Galois invariant on each component. Since the ramification extensions are not trivial to begin with, one can see that the Galois group must cyclically permute the components of an eigendecomposition, forcing equality of degrees.

\begin{remark}
In Section 5 of \cite{paiitbgoaas}, the uniform condition included triviality of the determinant. As we will demonstrate below, when $\ell=2$, it is essential that one allow other determinants. In fact, it was precisely this trivial determinant condition in \cite{paiitbgoaas} that forced $\ell$ to be odd and led to various gymnastics. We avoid such unnecessary exercise here.
\end{remark}

Uniform sheaves form an open substack $$\ms M_{\ms D}^\unif(\ell^2, \ms N)\subset\ms M_{\ms D}(\ell^2, \ms N),$$ in the notation of the beginning of Section \ref{sec:twisted-stacks}. The main result on uniform twisted sheaves is the following.

\begin{prop}\label{P:uniform-irred}
For a fixed invertible sheaf $\ms N$ on $\mc D$, the stack $\ms M_{\ms D}^\unif(\ell^2,\ms N)$ is geometrically integral if it is non-empty.
\end{prop}
Non-emptiness is somewhat subtle (especially for $\ell=2$) and will occupy Paragraphs \ref{case1} and \ref{case2} below.
\begin{proof}
While \cite{paiitbgoaas} requires that the determinant be trivial, the proof as written there in Paragraph 5.1.9ff applies here, as well. Rather than repeat the details, I will use this space to give a ``guide to the literature''. Assume that $\ms M_{\ms D}^\unif(\ell^2,\ms N)$ is nonempty. To prove the result, we may thus replace $\kappa$ by $\widebar\kappa$ and assume the base field is algbraically closed. 

We can write the smooth stack $\ms M_{\ms D}^\unif(\ell^2,\ms N)$ as an ascending union of open substacks with bounded Castelnuovo-Mumford regularity; it suffices to prove that these open substacks are irreducible. We will write $M$ for one such open substack. Given the bound on the regularity, we have the following Bertini-type result. 

\begin{prop}[Proposition 5.1.12 of \cite{paiitbgoaas}]\label{sec:moduli-unif-twist-8}
  There exists a positive integer $n$ such that for any algebraically closed field $K$ containing $\kappa$ and any two objects
  $V$ and $W$ in $M(K)$, a general map $V\to W(n)$ has cokernel $Q$
  satisfying the following conditions.
  \begin{enumerate}
  \item The support $S:=\supp Q$ is a finite reduced substack of $\ms
    D^\sm$ and $Q$ is identified with a $\ms D_S$-twisted invertible
    sheaf.
  \item For any $i$ we have $|S\cap D_i|=\ell^2nH\cdot D_i$.
  \item If $D_i$ is a ghost component and $\Lambda$ is an invertible $\ms D_i$-twisted sheaf, there is a partition $$S\cap
    D_i=S_0\coprod\cdots\coprod S_{\ell-1}$$ such that (i) for each
    $j$, $|S_j|=\ell n H\cdot D_i$, and (ii)
    $Q|_{S_j}\tensor\Lambda_{S_j}^{\vee}$ is isotypic of type $j$.
  \end{enumerate}
\end{prop}
We omit the proof. The interested reader can simply read the proof written in \cite{paiitbgoaas}; it carries over word for word, with a few minor notational changes and the knowledge that the component indices $i=1,\ldots,s$ in \cite{paiitbgoaas} are reserved for what we here call ghost components.

Now the idea is to fix a single $V$ and use a space of extensions to parametrize $W(n)$, thus producing an irreducible cover of $M$. What follows is taken almost verbatim from \cite{paiitbgoaas}, starting with the paragraph preceding Proposition 5.1.13, with appropriate notational changes for the present context (and clarification of sentence structure).

For each $i$, define a $\kappa$-stack $\ms Q_i$ as follows.  The objects of
$\ms Q_i$ over $T$ are pairs $(E,L)$ with $E\subset (D_i\setminus\cup_{j\neq i}D_j)\times T$
a closed subscheme which is finite \'etale over $T$ of degree $n^2\ell
H\cdot D_i$ and $L$ an invertible $\ms D_E$-twisted sheaf. If $D_i$ is a ghost component, we fix an invertible $\ms D_i$-twisted sheaf $\Lambda_i$ and additionally require that there be a partition $E=E_0\coprod\cdots\coprod
E_{\ell-1}$ with $L|_{E_j}\tensor\Lambda_i^{\vee}$ isotypic of type
$j$.

\begin{prop}[Proposition 5.1.13 of \cite{paiitbgoaas}]\label{sec:moduli-unif-twist-9}
  For each $i$, the stack $\ms Q_i$ is irreducible.
\end{prop}
The proof of this Proposition is not entirely trivial; the interested reader can read it in \cite{paiitbgoaas} without needing additional context (aside from the mild notational differences, and the knowledge that the indices $i=1,\ldots,s$ are reserved there tthe ghost components, as above).

Finally, let $\ms Q=\prod\ms Q_i$, and let $\mf Q$ be the universal object on $\ms D\times\ms Q$. The proof of the following is almost entirely abstract nonsense and can again be read without further context in \cite{paiitbgoaas}.
\begin{lem}[Lemma 5.1.14 of \cite{paiitbgoaas}]\label{sec:moduli-unif-twist-10}
  The complex $\R(\pr_2)_\ast\rshom(\mf Q,\pr_1^\ast V)[1]$ is
  quasi-isomorphic to a locally free sheaf $\ms F$ on $\ms Q$.
  Moreover, this sheaf has the property that for any affine scheme $T$
  and any morphism $\psi:T\to\ms Q$, the set $\ms F_T(T)$ parametrizes
  extensions $0\to V\to W(n)\to Q\to 0$ with $Q$ the object of $\ms Q$
  corresponding to $\psi$.
\end{lem}

The dense open substack of $\mathbf{V}(\ms F^{\vee})$ parametrizing locally free extensions covers our moduli space $M$, showing that it is irreducible, as desired.
\end{proof}

\begin{cor}\label{C:non-emptiness-is-it}
Given an invertible sheaf $\ms N$ on $\mc D$, if the stack $\ms M_{\ms D_i}^\unif(\ell^2, \ms N|_{\mc D_i})$ is non-empty for each $i$ then there is a biregular locally free $\ms D$-twisted sheaf of rank $\ell^2$ and determinant $\ms N$.
\end{cor}
\begin{proof}
By Proposition \ref{P:uniform-irred} and the fact that $\kappa$ is PAC, it is enough to show that the hypothesis of the Corollary implies that $$\ms M_{\ms D\tensor_\kappa\widebar\kappa}^\unif(\ell^2,\ms N)\neq\emptyset.$$
By assumption, there is such a sheaf $V_i$ over each component $\ms D_i\tensor_\kappa\widebar\kappa$. Since $\ms D$ is a nodal union of the $\ms D_i$, Lemma 3.1.4.8 of \cite{twisted-moduli} shows that it is enough to produce determinant-preserving isomorphisms between the restrictions of the $V_i$ to the intersection gerbes $\ms D_i\cap\ms D_j$. But Proposition \ref{P:bireg} shows that for any such gerbe $\xi$, the restrictions $V_i|_\xi$ and $V_j|_\xi$ must be isomorphic, as there is a unique biregular $\xi$-twisted sheaf of rank $\ell^2$, and we can make the isomorphisms respect the determinant by a suitable scalar multiplication (as we are now working over $\widebar\kappa$!).
\end{proof}

\subsection{Proof of Theorem \ref{T:sheaf-on-D}}
\label{sec:existence}
We are now ready to show that biregular $\ms D$-twisted sheaves of rank $\ell^2$ exist.

By Corollary \ref{C:non-emptiness-is-it}, to prove Theorem \ref{T:sheaf-on-D}, it suffices to replace $\kappa$ by $\widebar\kappa$ and show that there are uniform biregular $\ms D_i$-twisted sheaves for each component $\ms D_i$ of $\ms D$ (now assumed to be over an algebraically closed field). The key is in the selection of the determinant sheaf $\ms N$, and it is here that the cases of odd $\ell$ and $\ell=2$ bizarrely diverge.

More precisely, the fundamental issue is caused by the fact that the regular represention of $\m_2$ has non-trivial determinant. In particular, this means that when studying uniform sheaves on ghost components for $\ell=2$, the eigensheaves (regular of rank $2$) have specific restrictions placed on their determinants by the normal bundles of the ramification components and the placement of the non-cold singular gerbes. The analysis of this crucial delicate case takes place in Paragraph \ref{case2} below.

\begin{hyp}\label{H:asses}
In this section we assume that either
\begin{enumerate}
\item $\ell$ is odd and $\ms N=\ms O$, or
\item $\ell=2$ and the ramification divisor
$D$ of $\alpha$ has the form $S+R$ as discussed in Section
\ref{sec:even-ram}, and then we let $\ms N=\ms O(S)|_{\ms D}$.
\end{enumerate}
\end{hyp}

The existence of a $\ms D_i$-twisted sheaf is a bit different for ghost and non-ghost components.

\begin{para} {\bf Existence when $D_i$ is not a ghost component}.\label{case1}
In this case, there is no eigendecomposition to contend with, and we merely seek a biregular locally free $\ms D_i$-twisted sheaf of rank $\ell^2$ and determinant $\ms N$. This construction works just as in the proof of Proposition 5.1.17 of \cite{paiitbgoaas}, where the full details are explained; we explain the essence here. 

First, it suffices to make any biregular locally free $\ms D_i$-twisted sheaf $V$ of rank $\ell^2$, since we can adjust the determinant using elementary transforms over points of $D_i^\circ = D_i\setminus\cup_{j\neq i}D_j$. More precisely, given a sheaf $V$ and an ample class $\ms O(1)$ on $D_i$, the sheaf $V(n)$ will have an ample determinant $\ms L$ such that $\ms L\tensor\ms N^{\vee}$ is isomorphic to $\ms O_D(E)$ for some $E$ that is supported entirely in $D_i^\circ$. Choosing an invertible quotient of the restriction $V(n)|_E\to L$, the kernel of the composed map $$V(n)\to\iota_\ast V(n)|_E\to\iota_\ast L$$ will have determinant $\ms N$. (An invertible twisted sheaf $L$ supported on $E$ exists since $\kappa$ is by assumption algebraically closed, and $\Br(\B\m_{\ell,\kappa})[\ell]$ is trivial by Lemma 4.1.3 of \cite{paiitbgoaas}.)

We will make a biregular twisted sheaf by formal gluing. For each gerbe $\xi\in D_i\cap\Sing(D)$, there is a unique biregular sheaf $V_\xi$ of rank $\ell^2$ (see Proposition \ref{P:bireg}). Moreover, $\ext^2(V_\xi,V_\xi)=0$, so $V_\xi$ deforms to some $V$ over a formal neighborhood $\Spec \widehat{\ms O}_{\ms D_i,\xi}$. The generic fiber is a twisted sheaf for a gerbe over $\B\m_{\ell,\Spec \kappa\(t\)}$. The Brauer group of this gerbe is  computable by the Leray spectral sequence, and it is identified with $\H^1(\Spec \kappa\(t\),\Z/\ell\Z)$ (see Proposition 4.1.4 of \cite{paiitbgoaas} for a computation of the part prime to the characteristic of $\kappa$, which is all that we need here). It follows that there is always a regular twisted sheaf of rank $\ell$, hence one of rank $\ell^2$, say $W$. Standard formal gluing results (Theorem 6.5 of \cite{MR1432058}) allow us to glue $V$ to $W$, yielding a twisted sheaf that is biregular at $\xi$. Repeating this for each such gerbe $\xi$ (and gluing to the twisted sheaf that is under construction, so that the structure is preserved at all gerbes already treated) yields the desired result.

Note that this step is independent of the parity of $\ell$ and the value of $\ms N$.
\end{para}

\begin{para} {\bf Existence when $D_i$ is a ghost component}.\label{case2}
Now the fun begins! The argument for odd $\ell$ is also written out in full detail in the proof of Proposition 5.1.17 of \cite{paiitbgoaas}. In either case, the proofs start the same way, which we recall here. We choose notation here that mostly harmonizes with Proposition 5.1.17 of \cite{paiitbgoaas}.

The ghost assumption ensures that the Brauer class of $\ms D_i\to \mc D_i$ is \emph{trivial} (since we are now working over algebraically closed $\kappa$). Thus, we can choose an invertible $\ms D_i$-twisted sheaf $\Lambda$ such that $\Lambda^{\tensor\ell}$ is the pullback of some invertible sheaf $\lambda$ on $D_i$. (The sheaf $\lambda$ is pulled back from $D_i$ and not merely $\mc D_i$ because, by Proposition \ref{P:vanishy}, we have chosen $\ms D\to\mc D$ so that the value in $\H^2(\widebar\xi,\m_\ell)$ is $0$ for all geometric gerbes in the smooth locus of $\mc D$.) Pulling back and tensoring with $\Lambda$ defines an equivalence of categories between sheaves on $\mc D_i$ and $\ms D_i$-twisted sheaves. Moreover, the regularity conditions translate directly into similar conditions for sheaves on $\mc D_i$, where the fibers are viewed directly as representations of $\m_\ell$ or $\m_\ell\times\m_\ell$.

Using this equivalence, we see that to construct a uniform $\ms D_i$-twisted sheaf, we seek to achieve the following.

\begin{goal}\label{G:goal}
Find a sheaf on $\mc D_i$ of the form 
$$V=V_0\oplus\cdots\oplus V_{\ell-1},$$
where 
\begin{itemize}
\item for each $s\in\{0,\ldots,\ell-1\}$ the sheaf $V_s$ is locally free of rank $\ell$ and isotypic of type $s$;
\item for each $\xi\in\mc D_i\cap\cup_{j\neq i}\mc D_j$, the restriction $V_s|_\xi$ is isomorphic to the representation of $\m_\ell\times\m_\ell$ that is the regular representation of the second factor tensored with the $s$-power character in the first factor (assuming that the first factor is the specialization of the generic stabilizer of $\mc D_i$);
\item the determinant of $V_s$ is isomorphic to $\lambda^{-1}\tensor\ms M$, where $\ms M$ is an invertible sheaf on $\mc D_i$ such that $\ms M^{\tensor\ell}\cong\ms N$. 
\end{itemize}
\end{goal}
The desired uniform sheaf is then $\Lambda\tensor V$. 

\begin{remark}\label{R:haha}
Note that the second and third condition together imply that the action of each stabilizer $\m_\ell\times\m_\ell$ at a point of $\mc D_i\cap\cup_{j\neq i}\mc D_j$ is via the determinant of the regular representation of the second factor (where we use the convention established in Section \ref{sec:splitting} that the first factor represents the specialization of the generic stabilizer and the second factor the specialization of the generic stabilizer of the transverse curve).
\end{remark}

{\bf When $\ell$ is odd}, this is easily arranged. The first two conditions can be dealt with using formal gluing as in the non-ghost case, and the third condition is satisfied by letting $\ms M=\ms O$ (recall: in the odd case $\ms N=\ms O$, so this does indeed give a correct root of $\ms N$) and using elementary transforms to align the determinant of $V_s$ properly (again, just as in the proof of the previous case). Since $\ell$ is odd, the regular representation has trivial determinant, so the phenomenon observed in Remark \ref{R:haha} is invisible.

{\bf When $\ell$ is even}, we need to use our understanding of the structure of $D$ exposed in Section \ref{sec:numerics}, as the determinant of the regular representation is not trivial, thus imposing concrete constraints on the determinant of each $V_s$, as in Remark \ref{R:haha}. 

First, assume that $D_i\subset S$. By Proposition \ref{P:parity} and Corollary \ref{C:ghost-det}, we have that $\ms N$ (now assumed to be $\ms O(S)$) has even degree $2d$ on $D_i$. 

Consider the stacky curve $$\Delta:=D_i\times_D\cup_{j\neq i}\mc D_j.$$ The stack $\Delta$ is a smooth Deligne-Mumford curve with coarse space $D_i$ that has $D_i\cdot\cup_{j\neq i}D_j$ stacky points, each with stabilizer $\m_2$. By Proposition \ref{P:parity}, $D_i\cdot R$ is even, and by Corollary \ref{C:ghost-dis} we know that $D_i\cdot\cup_{j\neq i}D_j=D_i\cdot R$. It follows that $\Delta$ has an even number of stacky points $\delta_1,\ldots,\delta_{2e}$, and that there is an invertible sheaf $\ms L$ on $\Delta$ such that 
\begin{enumerate}
\item $\ms L^{\tensor 2}\cong\ms N$, and 
\item for each stacky point $\iota:\B\m_2\inj\Delta$, the stabilizer acts non-trivially on the fiber $\iota^\ast\ms L$.
\end{enumerate}
Indeed, the sheaf $\ms O_{\Delta}(\delta_1+\cdots+\delta_{2e})$ has square of even degree with trivial stabilizer actions, hence is the pullback of some even-degree invertible sheaf $\ms L'$ from $D_i$. But then $\ms O(-G)|_{D_i}\tensor\ms L'$ has a square root in $\Pic(D_i)$ (since we are working over an algebraically closed field), allowing us to produce $\ms L$.

Let $N$ be an invertible sheaf on $\mc D_i$ that has non-trivial action of the generic stabilizer and satisfes $N^{\tensor 2}\cong\ms O_{\mc D_i}$. (This is possible since $D_i\cdot D_i$ is even.) Define $$V_0=\left(\ms L\tensor\lambda^{\vee}|_{\mc D_i}\right)\oplus\ms O_{\mc D_i}$$ and $$V_1=N\tensor V_0.$$
The sheaves $V_0$ and $V_1$ satisfy the requirement of Goal \ref{G:goal}, completing the proof in this case.

It remains to treat the case $D_i\subset R$. We know that such a $D_i$ is a $(-2)$-curve that meets $D$ at a single point on a component of $S$, so that $\ms N|_{D_i}\cong\ms O(1)$. Let $\Delta$ be the root construction of order $2$ applied to $\P^1$ at the point $[0:1]$, and let $\delta\subset\Delta$ be the unique stacky point, which has stabilizer $\m_2$. We know that $\ms O(1)\cong(\ms O(\delta))^{\tensor 2}$ (``$\delta$ is half of a point'').

Since $D_i$ is a $(-2)$-curve, we have that $\mc D_i\cong\B\m_2\times\Delta$. Setting 
$$V_0=\left(\ms O_\Delta(\delta)\tensor\lambda^{\vee}\right)\oplus\ms O_{\mc D_i}$$
and $$V_1=\chi\boxtimes V_0,$$
where $\chi$ is the invertible sheaf on $\B\m_2$ associated to the non-trivial character $\m_2\inj\G_m$, achieves Goal \ref{G:goal}, completing the proof.
\end{para}

\section{Formal local structures around $\Sing(D)$ over $\widebar k$}
\label{sec:formal}

In this section we lay the local groundwork for lifting a twisted
sheaf from $\ms D\tensor\widebar k$ to $\ms X\tensor\widebar k$.
The globalization will be carried out in Section \ref{sec:nonempty}.

Let $x$ be a closed point of $X\tensor\widebar k$ lying over a
singular point of $D$.  Write $A$ for the local ring $\ms
O_{X\tensor\widebar k,x}$, and let $x,y\in A$ be local equations for
the branches of $D$.  Write $A'$ for the Henselization of $A$ with
respect to the ideal $I=(xy)$; we have that $A'$ is a colimit of local
rings of smooth surfaces, each with $x$ and $y$ as regular parameters.
Finally, let $U=\spec A'\setminus Z(I)$ be the open complement of the
divisor $Z(xy)$.  

The following is an easy consequence of a fundamental result of Artin.

\begin{prop}\label{P:artin}
  Suppose $\alpha\in\Br(U)[\ell]$ has non-trivial secondary
  ramification or is unramified at $Z(x)$.  If $\ms A$ and $\ms B$ are Azumaya
  algebras on $U$ of degree $\ell$ and Brauer class $\alpha$ then $\ms A$ is isomorphic to $\ms B$.
\end{prop}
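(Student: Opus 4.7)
The strategy is to convert the assertion into a uniqueness claim for twisted sheaves on the $\m_\ell$-gerbe representing $\alpha$, extend across the singular divisor on a stacky cover, and invoke Artin's classification of Azumaya algebras on a regular $2$-dimensional Henselian pair. Let $\ms U\to U$ denote the $\m_\ell$-gerbe representing $\alpha$, and choose locally free $\ms U$-twisted sheaves $V$ and $W$ of rank $\ell$ with $\ms A\cong\shom(V,V)$ and $\ms B\cong\shom(W,W)$; such choices exist because $\ind(\alpha|_U)\mid\ell$. The assertion $\ms A\cong\ms B$ is then equivalent to $V\cong W\otimes L$ for some $L\in\Pic(U)$. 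Since $(A',(xy))$ is a Henselian pair with local quotient $A/(xy)$, one has $\Pic(\spec A')=\Pic(\spec A/(xy))=0$, and removing from $\spec A'$ the two Cartier components of $Z(xy)$ leaves $\Pic(U)=0$. It therefore suffices to prove $V\cong W$.

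Next, I would extend $V$ and $W$ across $Z(xy)$ via reflexive extension on a stacky thickening. Let $\widetilde{\mc U}\to\spec A'$ be the $\ell$-th root stack along the components of $Z(xy)$ at which $\alpha$ ramifies, and let $\widetilde{\ms U}\to\widetilde{\mc U}$ be the extension of $\ms U$ furnished by the argument of Section \ref{sec:splitting}. The double-duals of $V$ and $W$ on $\widetilde{\ms U}$ are reflexive rank-$\ell$ twisted sheaves $\widetilde V$, $\widetilde W$. The ramification hypothesis is exactly what ensures these extensions are locally free: in the unramified-at-$Z(x)$ case this is purity across a smooth divisor of the base, while in the non-trivial secondary ramification case the representation theory of the stabilizer $\m_\ell\times\m_\ell$ at the node forces the extension to carry the expected regular-representation structure at the singular residual gerbe.

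With $\widetilde V$ and $\widetilde W$ both locally free, I would apply the fundamental result of Artin from Section 3 of \cite{MR0321934}: on a regular $2$-dimensional Henselian pair, two Azumaya algebras of the same degree representing the same Brauer class with compatible ramification data are isomorphic. The hypothesis rigidifies the ramification data, so $\shom(\widetilde V,\widetilde V)\cong\shom(\widetilde W,\widetilde W)$; restricting to $U$ and using the vanishing of $\Pic(U)$ yields $V\cong W$, and hence $\ms A\cong\ms B$.

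The main obstacle is the local analysis at the singular point of $Z(xy)$: one must verify that in each case of the hypothesis the reflexive extension is genuinely locally free (not merely reflexive), and that Artin's classification -- originally phrased for Azumaya algebras on schemes -- applies verbatim to the stacky extension furnished by the root construction. Both of these reduce to explicit calculations with the representation theory of $\m_\ell\times\m_\ell$ acting on the fiber over the node, plus a check that the analogous Picard group of the stacky base still vanishes.
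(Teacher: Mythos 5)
Your overall strategy---reduce to uniqueness of rank-$\ell$ twisted sheaves on the gerbe over $U$ and invoke an Artin-type rigidity result for two-dimensional regular local rings---is the right general idea, but the route you take is considerably more elaborate than the paper's argument and contains a substantive citation error. You attribute the classification result to Section~3 of \cite{MR0321934}; that paper (Artin--Mumford) treats the ramification theory of Brauer classes, not local uniqueness of Azumaya algebras or maximal orders. The result actually needed is Theorem~1.2 of \cite{MR657428} (Artin, ``Local structure of maximal orders on surfaces''), whose cases (1.1)(ii) and (1.1)(iii) match exactly the two hypotheses in the proposition (non-trivial secondary ramification, or unramified along one branch).

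The paper's proof is far shorter and avoids the machinery you invoke: it extends $\ms A$ and $\ms B$ directly to maximal orders over the (ordinary, non-stacky) local ring $A$, observes that the hypothesis places $\alpha$ in cases (1.1)(ii) or (1.1)(iii) of \cite{MR657428}, and concludes from Theorem~1.2 there that maximal orders are Zariski-locally unique, hence $\ms A\cong\ms B$. Your detour---passing to a root stack $\widetilde{\mc U}$, taking reflexive hulls of twisted sheaves, arguing local freeness of the extension via stabilizer representation theory, and then checking that Artin's scheme-theoretic theorem transfers to the stacky setting---effectively re-derives large parts of what Artin's theory of maximal orders already provides. You flag the local-freeness of the reflexive extension at the node and the stacky transfer of Artin's theorem as ``the main obstacle,'' but these are exactly the non-trivial points; left unproved, they leave a real gap. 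The cleaner move is to stay entirely with maximal orders over $A$ and let Artin's Theorem~1.2 carry the full weight, which is what the paper does. (Your observation that $\Pic(U)=0$, needed to pass from Azumaya isomorphism back to sheaf isomorphism, is correct, though it is more directly seen from $A'$ being a two-dimensional regular local ring, hence factorial, than from the Henselian-pair property per se; in any case that observation is used in the Corollary that follows the proposition, not in the proposition's proof itself.)
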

\begin{proof}
  The algebras $\ms A$ and $\ms B$ extend to maximal orders over $A$.
  The hypothesis on $\alpha$ implies that a generic division algebra
  $D$ with class $\alpha$ satisfies conditions (1.1)(ii) or (1.1)(iii)
  of \cite{MR657428}.  Maximal orders are Zariski-locally unique
  in these cases by Theorem 1.2 of \cite{MR657428}, so we conclude that
  $\ms A\cong\ms B$, as desired.
\end{proof}
\begin{cor}\label{C:sheaf-version}
  If $\ms X_U\to U$ is a $\m_\ell$-gerbe whose Brauer class $\alpha$
  satisfies the hypothesis of Proposition \ref{P:artin} then for any
  positive integer $m$ there is
  a unique $\ms X_U$-twisted sheaf of rank $\ell m$.
\end{cor}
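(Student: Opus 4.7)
The plan is to move from twisted sheaves to Azumaya algebras on $U$ via the endomorphism functor and reduce the case of general rank $\ell m$ to the case $m=1$ handled by Proposition \ref{P:artin}.

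A preliminary calculation shows that $\Pic(U)=0$: the scheme $\spec A'$ is regular with trivial Picard group, and its divisor class group is generated by the principal divisors $Z(x)$ and $Z(y)$, both of which are killed in the localization sequence for $U$. Consequently, the functor $\ms V\mapsto\send(\ms V)$ induces a bijection between isomorphism classes of locally free $\ms X_U$-twisted sheaves of rank $r$ and isomorphism classes of Azumaya algebras of degree $r$ and Brauer class $\alpha$ on $U$. Proposition \ref{P:artin} then produces a unique rank-$\ell$ locally free $\ms X_U$-twisted sheaf $\ms V_0$, and existence for general $m$ follows from $\ms V_0^{\oplus m}$.

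For uniqueness, let $\ms V$ be any rank-$\ell m$ locally free $\ms X_U$-twisted sheaf and set $\ms B:=\send(\ms V)$, an Azumaya algebra of degree $\ell m$ and class $\alpha$ on $U$. I would extend both $\ms B$ and $M_m(\ms A_0)$ (where $\ms A_0=\send(\ms V_0)$) to reflexive maximal orders on $\spec A'$ inside the central simple algebra $M_m(D)$ over $\mathrm{Frac}(A')$, where $D$ is the unique division algebra of class $\alpha$ and index $\ell$. Since the ramification hypotheses of Proposition \ref{P:artin} are satisfied, Theorem 1.2 of \cite{MR657428} gives Zariski-local uniqueness of maximal orders in any central simple $\mathrm{Frac}(A')$-algebra, yielding an isomorphism between these maximal-order extensions. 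Restricting to $U$ produces $\ms B\cong M_m(\ms A_0)$, and the bijection of the preceding paragraph then forces $\ms V\cong\ms V_0^{\oplus m}$.

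The main obstacle is ensuring that Artin's maximal-order uniqueness result applies in higher degree—that is, to the matrix algebra $M_m(D)$ and not only to $D$ itself—and that the Zariski-local isomorphism it produces can be promoted to a global one on $\spec A'$ (equivalently, that the locally free right $\ms A_0$-module $\shom(\ms V_0,\ms V)$ is globally isomorphic to $\ms A_0^m$). If the general form of Artin's theorem does not directly apply, a backup is to reformulate the desired uniqueness as the vanishing of the nonabelian cohomology $\H^1(U,\GL_m(\ms A_0))$ and derive it from the Henselian structure of $A'$ along $(xy)$ together with the classification of $\ms A_0$-modules at the generic points of the branches $Z(x)$ and $Z(y)$, where the relevant modules over a matrix algebra over a division algebra are determined by their rank.
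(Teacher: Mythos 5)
Your plan for the rank‑$\ell$ case is essentially the paper's (translate to Azumaya algebras, invoke Proposition~\ref{P:artin}, and use $\Pic(U)=0$ to promote $\send(\ms V)\cong\send(\ms V')$ to $\ms V\cong\ms V'$), and the observation that $\Pic(U)=0$ is correct. The gap is in the higher‑rank uniqueness, and it is a real one: you defer to Artin's Theorem~1.2 applied to maximal orders in $M_m(D)$, but that theorem is about maximal orders of degree equal to the index (here~$\ell$); you have flagged this yourself, and indeed it is not a direct citation. Even granting a degree‑$\ell m$ version, you would then face a globalization problem, since Artin gives Zariski‑local isomorphism, and for a degree‑$\ell m$ algebra the obstruction to globalizing lives in a nonabelian $\H^1(U,\GL_m(\ms A_0))$; your backup is to assert that this vanishes, but the argument is not given, and affineness of $U$ alone does not kill nonabelian $\H^1$ (affine Dedekind schemes can carry nontrivial rank‑$n$ bundles when $\Pic\neq 0$, so something specific to the situation is needed).

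The paper avoids all of this by using the geometry of $U$ more directly. Because $U$ is a Dedekind scheme and $\alpha$ has period $\ell$ and hence index $\ell$ (de Jong), any torsion‑free rank‑$\ell$ quotient of a locally free rank‑$\ell m$ twisted sheaf $\ms W$ is automatically locally free, hence isomorphic to the unique rank‑$\ell$ twisted sheaf $\ms V$. This produces a short exact sequence $0\to\ms K\to\ms W\to\ms V\to 0$ with $\ms K$ of rank $\ell(m-1)$, and by induction $\ms K\cong\ms V^{\oplus(m-1)}$. The extension is then classified by the \emph{abelian} group $\ext^1_{\ms X_U}(\ms V,\ms V)=\H^1(U,\send(\ms V))$, and this vanishes simply because $U$ is affine and $\send(\ms V)$ is an ordinary quasi-coherent sheaf. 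So the sequence splits and $\ms W\cong\ms V^{\oplus m}$. If you want to salvage your formulation, the right move is to prove $\H^1(U,\GL_m(\ms A_0))=0$ by exactly this dévissage (extract a free rank‑one $\ms A_0$‑quotient, reduce to vanishing of an abelian $\H^1$ on the affine~$U$), rather than by appealing to Artin's theorem in degree~$\ell m$ or to a vague classification at generic points.
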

\begin{proof}  
  By Proposition \ref{P:artin}, two locally free $\ms X_U$-twisted
  sheaves $\ms V$ and $\ms V'$ of rank $\ell$ satisfy $\send(\ms
  V)\cong\send(\ms V')$, whence there is an invertible sheaf $L$ on
  $U$ and an isomorphism $\ms V\simto\ms V'\tensor L$.  Since
  $\Pic(U)=0$, we conclude that $\ms V\cong\ms V'$.

  On the other hand, given a locally free $\ms X_U$-twisted sheaf $\ms
  W$ of rank $\ell m$, we claim that $\ms W$ admits a locally free
  quotient of rank $\ell$.  To see this, note that $U$ is a Dedekind
  scheme and thus any torsion free sheaf is locally free.
  Furthermore, $\alpha$ has period $\ell$ and therefore index $\ell$
  by de Jong's theorem \cite{MR2060023}.  Thus, any torsion free
  quotient of $\ms W$ of rank $\ell$ is a locally free quotient.  As a
  consequence, we can write $\ms W$ as an extension $0\to \ms K\to\ms
  W\to\ms V\to 0$ with $\ms K$ of rank $\ell(m-1)$.  By induction we
  know that $\ms K\cong\ms V^{\oplus m-1}$.  To establish the claim it
  thus suffices to show that $\ext^1_{\ms X_U}(\ms V,\ms V)=0$.  Since
  both are $\ms X_U$-twisted, we see that $\ext^1_{\ms X_U}(\ms V,\ms
  V)=\H^1(U,\send(\ms V,\ms V))$, so it suffices to show that for any
  locally free sheaf $\ms T$ on $U$ we have $\H^1(U,T)=0$.  But $U$ is
  the complement of the vanishing of a single element of $A$, so it is
  affine.  Thus, all higher cohomology of coherent sheaves vanishes.
\end{proof}

The key consequence of this formal statement is a Zariski-local
existence statement.  Assume that $k$ is
algebraically closed and let $q\in D$ be a singular point.  Write
$\xi$ for the closed residual gerbe of $\ms X$ lying over $q$ and $\ms
X_q$ for the fiber product $\ms X\times_X\spec\ms O_{X,q}$.  Finally,
write $\ms X_U$ for $\ms X\times_X(\spec\ms O_{X,q}\setminus D)$.

\begin{prop}\label{P:local-existence}
Given a locally free $\ms X_U$-twisted sheaf $\ms V_U$ of rank $\ell^2$ and a
regular locally free $\xi$-twisted sheaf $\ms V_\xi$ of rank $\ell^2$, there is a locally
free $\ms X_q$-twisted sheaf $\ms V$ of rank $\ell^2$ such that $\ms
V|_U\cong\ms V_U$ and $\ms V|_\xi\cong\ms V_\xi$.
\end{prop}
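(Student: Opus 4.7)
My plan is a formal patching argument in the spirit of Beauville--Laszlo: first I extend $\ms V_\xi$ to a formal neighborhood of $\xi$ in $\ms X_q$, next I identify this extension with $\ms V_U$ on the punctured neighborhood using the uniqueness statement in Corollary~\ref{C:sheaf-version}, and finally I glue the two sheaves along that isomorphism.

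For the formal lift, observe that $\xi$ is zero-dimensional, so $\ext^2_0(\ms V_\xi,\ms V_\xi)=0$, and Lemma~\ref{L:U} shows that the stack of locally free equideterminantal twisted sheaves is smooth at $[\ms V_\xi]$. Lifting inductively along the infinitesimal thickenings of $\xi$ inside $\ms X_q$ produces a compatible system of locally free twisted sheaves which assembles into a formal locally free twisted sheaf $\widehat{\ms V}$ on the completion $\widehat{\ms X}_q$ of $\ms X_q$ at $\xi$.

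To match $\widehat{\ms V}$ with $\ms V_U$ on the overlap, let $A=\ms O_{X,q}$, let $x,y\in A$ be local equations for the two branches of $D$ through $q$, and form the Henselization $A'$ of $A$ along $(xy)$ with $U':=\spec A'\setminus Z(xy)$ as in the setup of Proposition~\ref{P:artin}. Both $\widehat{\ms V}$ and $\ms V_U$ restrict to locally free twisted sheaves of rank $\ell^2$ on $\ms X\times_X U'$. By the geometric-stability reduction~\ref{stable} in Section~\ref{sec:reductions}, at the singular point $q$ at least one branch of $D$ has geometrically trivial ramification, so $\alpha|_{U'}$ satisfies the hypothesis of Proposition~\ref{P:artin}, and Corollary~\ref{C:sheaf-version} provides an isomorphism $\varphi$ between the two restrictions. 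Beauville--Laszlo gluing along the Cartier divisor $Z(xy)$ then combines $\ms V_U$, $\widehat{\ms V}$, and $\varphi$ into the desired $\ms V$.

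The main obstacle I anticipate is transporting the classical Beauville--Laszlo theorem for modules on a local ring to twisted sheaves on the Deligne--Mumford stack $\ms X_q$ while preserving the twisted structure. I would address this by pulling the whole situation back along the finite flat projective uniformization $u:Z\to\ms X$ introduced in Section~\ref{sec:gerbe}, performing the gluing in the classical module-theoretic setting on $Z\times_{\ms X}\ms X_q$, and then descending the resulting locally free sheaf together with its $\m_\ell$-linearization back to $\ms X_q$.
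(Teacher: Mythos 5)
Your strategy (formal lift at $\xi$, identify via Artin's uniqueness, patch) is morally in the same spirit as the paper's, but the one-step patching as written has a genuine gap in where the objects live. The formal lift $\widehat{\ms V}$ you construct lives on the completion of $\ms X_q$ at $\xi$, i.e.\ on $\ms X\times_X\spec\widehat{\ms O}_{X,q}$ where the completion is $\mf m$-adic. There is no map from $U'=\spec A'\setminus Z(xy)$ to $\spec\widehat{\ms O}_{X,q}$ — the canonical map runs the other way, $\spec\widehat{\ms O}_{X,q}\to\spec A'$ — so the assertion that "$\widehat{\ms V}$ and $\ms V_U$ both restrict to $\ms X\times_X U'$" does not make sense. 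What $\widehat{\ms V}$ restricts to is $\spec\widehat{\ms O}_{X,q}\setminus Z(xy)$, which is $2$-dimensional and in particular not a Dedekind scheme, so the proof of Corollary \ref{C:sheaf-version} (which leans on $U'$ being Dedekind to get torsion-free $\Rightarrow$ locally free and to run the filtration argument) does not transfer. Likewise, Beauville--Laszlo gluing along the Cartier divisor $Z(xy)$ takes as input a sheaf on the $(xy)$-adic completion $\widehat{A}_{(xy)}$ (or, in Moret-Bailly's form, on the Henselization $A'$), together with a sheaf on $A[1/xy]$ and an isomorphism over $\widehat{A}_{(xy)}[1/xy]$ (resp.\ $U'$). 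A sheaf on the $\mf m$-adic completion $\widehat{\ms O}_{X,q}$ is neither of these; since the map is $\widehat{A}_{(xy)}\to\widehat{\ms O}_{X,q}$, you cannot restrict $\widehat{\ms V}$ to $\widehat{A}_{(xy)}$ without an algebraization step that you have not supplied.

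The paper sidesteps this by dropping codimension one step at a time. It first deforms $\ms V_\xi$ only along the one-dimensional stack $\widehat{\ms D}=\ms D\times_X\spec\widehat{\ms O}_{X,q}$ (where $\xi$ has codimension $1$), then patches with the regular sheaf $\ms R_i$ at the generic points of $\ms D$ via Moret-Bailly (Theorem 6.5 of \cite{MR1432058}) to produce $\widetilde{\ms V}$ on $\ms D$; only then does it deform $\widetilde{\ms V}$ from $\ms D$ to $\spec A'$ along the divisor $Z(xy)$ (again codimension $1$) and patch with $\ms V_U$ over $U'$, which is exactly where Corollary \ref{C:sheaf-version} is available. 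Your verification of the hypothesis of Proposition \ref{P:artin} via geometric stability is correct and is what the paper is implicitly using, and passing to the uniformization $Z$ is a reasonable way to handle the stacky bookkeeping, but neither repairs the completion mismatch. To fix the argument you would need either to insert the intermediate curve $\ms D$ as the paper does, or to invoke a formal-gluing statement at a codimension-$2$ closed point rather than Beauville--Laszlo — and even then you would first have to extend $\ms V_U$ across the generic points of $D$ before any such gluing could produce a coherent sheaf on all of $\ms X_q$.
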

\begin{proof}
  Write $\widehat{\ms D}=\ms D\times_X\spec\widehat{\ms O}_{X,q}$.
  Basic deformation theory shows that $\ms V_\xi$ deforms to a locally
  free $\widehat{\ms D}$-twisted sheaf $\widehat{\ms V}_\xi$ of rank
  $\ell^2$.  On the other hand, if $\ms D_{\eta_i}$ denotes the
  restriction of $\ms D$ to a generic point of $D$ (in the local
  scheme $\spec\ms O_{X,q}$), there is a unique regular $\ms
  D_{\eta_i}$-twisted sheaf $\ms R_i$ of rank $\ell^2$ by
  Lemma \ref{L:regzar}.  Thus, $\ms R_i|_{\widehat{\ms D}\times_{\ms D}\ms
    D_{\eta_i}}\cong\widehat{\ms V}_{\xi}|_{\widehat{\ms D}\times_{\ms
      D}\ms D_{\eta_i}}$.  Applying Theorem 6.5 of \cite{MR1432058},
  we see that there is a locally free $\ms D$-twisted sheaf
  $\widetilde{\ms V}$ such that $\widetilde{\ms V}|_{\xi}\cong\ms
  V_\xi$.

    Now apply the same argument again.  The same result shows that
    $\widetilde{\ms V}$ deforms to a $\ms X\times_X\spec A'$-twisted
    sheaf $\ms W$ of rank $\ell^2$.  Since $\ms V_U|_{\spec
      A'}\cong\ms W|_U$, we can again apply Theorem 6.5 of \cite{MR1432058} to
    conclude that there is a $\ms V$ as claimed in the statement.
\end{proof}

\section{Extending quotients}
\label{sec:elem-transf-10}

The results of this section are the second component (in addition to
the local analysis of Section \ref{sec:formal}) needed in Section \ref{sec:nonempty}
to solve the problem of lifting a $\ms D$-twisted sheaf to an $\ms
X$-twisted sheaf.  To start, we recall the notion of elementary
transformation.

\begin{defn}
  Let $i:Z\subset Y$ be a divisor in a regular Artin stack.  Given a
  locally free sheaf $V$ on $Y$ and an invertible quotient $i^\ast V\surj
  Q$, the \emph{elementary transformation of $V$ along $Q$\/} is the
  kernel of the induced map $V\surj i_\ast Q$.
\end{defn}

It is a basic fact that the elementary
transformation of $V$ along $Q$ has determinant isomorphic to
$\det(V)(-Y)$.  This is proven in Appendix A of \cite{period-index-paper}.

Call an Artin stack \emph{Dedekind\/} if it is Noetherian and regular
and each connected component has dimension $1$.

\begin{lem}\label{sec:elem-transf-7}
  Let $C$ be a connected tame separated Dedekind stack
  with trivial generic stabilizer with a coarse moduli space
  $C\to\widebar C$.  Given a finite closed substack $S\subset C$ and a
  locally free sheaf $W_S$ of rank $r$ on $S$, there is a locally free
  sheaf $W$ on $C$ and an isomorphism $W|_S\simto W_S$.
\end{lem}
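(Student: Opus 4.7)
The strategy is to reduce to the \'etale-local structure of $C$, perform the extension locally using the semisimplicity of representations of tame stabilizers, and then assemble the local pieces into a global sheaf.

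Since $C$ is a tame separated Dedekind Deligne--Mumford stack with trivial generic stabilizer, \'etale-locally at each closed point $p$ of $\widebar C$ the stack $C$ is isomorphic to a quotient $[\spec R_p/\m_{n_p}]$, where $R_p$ is a DVR (the strict henselization of $\widebar C$ at $p$), $n_p$ is the order of the stabilizer (coprime to the residue characteristic), and $\m_{n_p}$ acts tamely and fixes the closed point. The finite closed substack $S$ is a disjoint union $\bigsqcup_j S_{p_j}$ of pieces supported above the finitely many points $p_j\in\widebar C$ lying under $S$, so I will construct the extension one $p_j$ at a time.

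At $p_j$, the pullback of $W_S|_{S_{p_j}}$ to $[\spec R_{p_j}/\m_{n_j}]$ is a $\m_{n_j}$-equivariant module $M$ over an Artinian quotient $R_{p_j}/\mathfrak{a}_j$, free of rank $r$ as a plain module. By tameness, $M$ admits a canonical isotypic decomposition $M=\bigoplus_\chi M(\chi)$ indexed by characters $\chi$ of $\m_{n_j}$. Each $M(\chi)$ is a direct summand of a finitely generated free module over the local ring $R_{p_j}/\mathfrak{a}_j$, hence projective, hence free of some rank $r(\chi)$, with $\sum_\chi r(\chi)=r$. Setting $\widetilde M:=\bigoplus_\chi R_{p_j}^{r(\chi)}$, where the $\chi$-summand is given the $\m_{n_j}$-action by $\chi$, produces a $\m_{n_j}$-equivariant free $R_{p_j}$-module that descends to a locally free sheaf on $[\spec R_{p_j}/\m_{n_j}]$ and that reduces, modulo $\mathfrak{a}_j$, to (a model of) $W_S|_{S_{p_j}}$.

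To globalize, I take the trivial sheaf $\ms O_{C\setminus S}^{\oplus r}$ on the open substack $C\setminus S$, and the local extensions above on henselian neighborhoods of the $p_j$. Over the punctured henselian disk at each $p_j$ both sheaves are locally free of rank $r$, and since such a punctured disk has trivial Picard group any two locally free sheaves of the same rank are isomorphic; pick any identification and glue by a Beauville--Laszlo-type argument, carried out after pullback along the representable finite flat cover $\spec R_{p_j}\to[\spec R_{p_j}/\m_{n_j}]$. The resulting sheaf $W$ is locally free of rank $r$ on $C$ and restricts to $W_S$ on $S$.

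\textbf{Main obstacle.} The delicate point is the final gluing step. The local representation-theoretic construction is essentially formal once tameness is invoked, but assembling the étale-local extensions into a genuine coherent sheaf on $C$ requires either a Beauville--Laszlo-style gluing adapted to the stacky setting or an Artin-approximation argument to descend from the henselian to the Zariski neighborhood; that bookkeeping is where the proof spends most of its effort.
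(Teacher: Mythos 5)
Your proposal follows the same skeleton as the paper's proof: extend $W_S$ to a sheaf on a formal/henselian neighborhood of $S$, glue with a trivial rank-$r$ sheaf on the punctured complement via a Beauville--Laszlo/Moret--Bailly descent, and spread out. The genuine difference is in the local extension step. You make it explicit: pass to the étale-local presentation $[\spec R_p/\m_{n_p}]$, use tameness to take the isotypic decomposition of the $\m_{n_p}$-module $W_S|_{S_p}$ over the Artinian quotient, and lift eigenspace by eigenspace. The paper instead argues abstractly, using infinitesimal deformation theory (the relevant obstruction groups vanish because $C\to\widebar C$ is tame and proper) together with the Grothendieck Existence Theorem for stacks (Theorem 1.4 of \cite{MR2183251}) to extend $W_S$ over the completed semilocal ring, and then applies Moret--Bailly's descent theorem (Theorem 6.5 of \cite{MR1432058}) to glue. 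Your route is more concrete and self-contained; the paper's is shorter and cites off-the-shelf machinery.

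Two points in your sketch need to be tightened before the argument is complete. First, your gluing takes place over henselian neighborhoods, which are not Zariski opens; you must either work over the completion $\widehat{\ms O}_{\widebar C,\widebar S}$ and invoke Beauville--Laszlo (or Moret--Bailly) to produce a sheaf over the genuine semilocal ring $\ms O_{\widebar C,\widebar S}$, and then spread out using that $\spec\ms O_{\widebar C,\widebar S}$ is a filtered limit of Zariski opens of $\widebar C$ with affine transition maps, or else supply an Artin-approximation argument to descend from the henselization. You flag this as the main obstacle but don't resolve it. Second, even after obtaining a locally free sheaf on a Zariski open substack $V\supset S$ with the right restriction, you still need to extend to all of $C$; on a Dedekind stack this is done by taking any torsion-free coherent extension (which is automatically locally free), a step your write-up omits. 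One last small slip: $R_p$ is not the strict henselization of $\widebar C$ at $p$ but a tamely ramified finite extension of it of degree $n_p$; the strict henselization of $\widebar C$ at $p$ is $R_p^{\m_{n_p}}$.
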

\begin{proof}
  Let $\widebar S\subset\widebar C$ be the reduced image of $S$ in
  $\widebar C$.  Since $C$ is tame and proper over $\widebar C$,
  infinitesimal deformation theory and the Grothendieck Existence
  Theorem for stacks (Theorem 1.4 of \cite{MR2183251}) show that $W_S$ is the restriction of a locally
  free sheaf $\widehat W$ of rank $r$ on $C\times_{\widebar
    C}{\spec\widehat{\ms O}_{\widebar C,\widebar S}}$, the semilocal
  completion of $C$ at $S$.  Let $U=\spec\ms O_{\widebar C,\widebar
    S}\setminus\widebar S$, and let $\widehat U=U\times_{\spec\ms
    O_{\widebar C,\widebar S}}\spec\widehat{\ms O}_{\widebar
    C,\widebar S}$.  Since locally free sheaves of rank $r$ over
  fields are unique up to isomorphism, we have that given a locally
  free sheaf $W_U$ of rank $r$ on $U$, there is an isomorphism
  $W_U|_{\widehat U}\simto \widehat W|_{\widehat U}$.  Applying
  Theorem 6.5 of \cite{MR1432058}, we see that there is a locally free sheaf $W'$ of
  rank $r$ on $C\times_{\widebar C}\spec \ms O_{\widebar C,\widebar
    S}$.  Since $\ms O_{\widebar C,\widebar S}$ is a limit of open
  subschemes of $\widebar C$ with affine transition maps and $W'$ is
  of finite presentation, we see that there is an open substack
  $V\subset C$ containing $S$ and a locally free sheaf $W_V$ of rank
  $r$ such that $W_V|_S$ is isomorphic to $W_S$.  Taking any torsion
  free (and thus locally free) extension of $W_V$ to all of $C$ yields
  the result.
\end{proof}

\begin{lem}\label{sec:elem-transf-8}
  Let $C$ be a connected separated Dedekind stack with trivial generic
  stabilizer and coarse moduli space $C\to\widebar C$.  Let $V$ be a
  locally free $\ms O_C$-module.  Given a finite closed substack
  $S\subset C$ and a locally free quotient $V|_S\surj Q_S$, there is a
  locally free quotient $V\surj Q$ whose restriction to $S$ is $V|_S\to Q_S$.
\end{lem}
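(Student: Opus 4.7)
The plan is to apply Lemma \ref{sec:elem-transf-7} to first extend $Q_S$ to a locally free sheaf $W$ on $C$, then to lift the induced surjection $V|_S \surj W|_S$ to a surjection on a Zariski neighborhood of $S$, and finally to extend the resulting rank-$r$ locally free quotient of $V$ over all of $C$ via the valuative criterion of properness applied to the Grassmannian bundle of rank-$r$ quotients of $V$.

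After Lemma \ref{sec:elem-transf-7} supplies a locally free $W$ of rank $r:=\operatorname{rank}(Q_S)$ together with an isomorphism $W|_S\simto Q_S$, the task reduces to lifting the corresponding surjection $\phi_S\colon V|_S\surj W|_S$. I would work on the semilocal completion $\widehat C := C\times_{\widebar C}\spec\widehat{\ms O}_{\widebar C,\widebar S}$ along $S$, exactly as in the proof of Lemma \ref{sec:elem-transf-7}. Both $V$ and $W$ are locally free over the regular semilocal base $\widehat C$, so $\phi_S$ trivially lifts to a morphism $\widehat\phi\colon V|_{\widehat C}\to W|_{\widehat C}$, which is surjective by Nakayama's lemma. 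Applying Theorem 6.5 of \cite{MR1432058} to the vector bundle $V^\vee\otimes W$ whose sections encode maps $V\to W$, one descends $\widehat\phi$ to an honest map $\phi_U\colon V|_U\to W|_U$ on some open substack $U\supset S$; since the surjective locus of $\phi_U$ is open and contains $S$, after shrinking $U$ this map is a genuine surjection restricting to $\phi_S$ on $S$.

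To pass from $U$ to all of $C$, I would interpret $\phi_U$ as a section $\sigma_U\colon U\to \operatorname{Gr}_r(V)$ of the Grassmannian bundle of rank-$r$ quotients of $V$, which is smooth and projective over $C$. The complement $C\setminus U$ is a finite set of closed points, and the \'etale-local structure of $C$ at each such point is that of a DVR, so the valuative criterion of properness extends $\sigma_U$ uniquely across each missing point to a global section $\sigma\colon C\to \operatorname{Gr}_r(V)$. The associated rank-$r$ locally free quotient $V\surj Q$ then restricts on $S\subset U$ to the given $V|_S\surj Q_S$, as required.

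The principal obstacle is the algebraization step, descending the semilocal lift $\widehat\phi$ to an honest morphism on a Zariski neighborhood; this is handled by precisely the input (Theorem 6.5 of \cite{MR1432058}) already invoked in Lemma \ref{sec:elem-transf-7}, so no new technique is needed beyond those in the preceding lemma.
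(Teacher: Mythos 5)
Your overall structure is a genuine alternative to the paper's: you extend the \emph{quotient} $Q_S$ to a locally free $W$ and lift the surjection $V|_S\surj W|_S$, then close up using the valuative criterion for the Grassmannian $\operatorname{Gr}_r(V)\to C$, whereas the paper extends the \emph{kernel} $K_S=\ker(V|_S\to Q_S)$, lifts the inclusion, and then extends from an open $U$ to $C$ by saturating the kernel inside $V$ (which is automatically locally free since $C$ is regular of dimension $1$). The final Grassmannian/valuative-criterion step is fine: $\operatorname{Gr}_r(V)$ is representable and proper over $C$, $C\setminus U$ is a finite set of closed residual gerbes, and an \'etale-local DVR argument extends the section uniquely. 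Both closings work; the saturation argument is slightly more elementary.

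There is, however, a genuine gap in your middle step. Theorem~6.5 of Moret-Bailly is a \emph{gluing} theorem: it takes compatible data over $\widehat C$ and over the punctured semilocal ring $U$ and produces an object over $\spec\ms O_{C,S}$. It is not a mechanism for descending a single section of a coherent sheaf (such as your $\widehat\phi\in\Gamma(\widehat C,\shom(V,W))$) from the completion to a Zariski neighborhood. Sections over the formal completion are in general formal and are not germs of algebraic sections, so ``descending $\widehat\phi$ to $\phi_U$'' is not justified. The reason Lemma~\ref{sec:elem-transf-7} needs both the completion and Moret-Bailly is that it must \emph{construct} the extended sheaf by formal deformation theory plus gluing; in your situation both $V$ and $W$ already exist globally, so the problem is only to lift a map, and the completion is a red herring. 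The correct fix is close at hand: work over the non-completed semilocal stack $\spec\ms O_{C,S}=C\times_{\widebar C}\spec\ms O_{\widebar C,\widebar S}$. Since $C$ is tame and this stack is affine over an affine base, the restriction map $\hom_{\spec\ms O_{C,S}}(V,W)\to\hom_S(V|_S,W|_S)$ is surjective, giving a lift of $\phi_S$; surjectivity of the lift near $S$ is then Nakayama, and a standard limit argument (as in the paper) spreads it out to an open substack $U\supset S$. With that replacement your proof goes through.
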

\begin{proof}
  Let $K_S\subset V|_S$ be the kernel of $V|_S\to Q_S$.  By Lemma
  \ref{sec:elem-transf-7} there is a locally free sheaf $K$ on $C$ and
  an isomorphism $K|_S\simto K_S$.  Since $C$ is tame, the map
  $\hom_{\spec\ms O_{C,S}}(K,V)\to\hom_S(K_S,V_S)$ is surjective, and
  thus there is a map $K_{\spec\ms O_{C,S}}\inj V_{\spec\ms O_{C,S}}$
  with cokernel $Q'$ restricting to $Q_S$ over $S$.  Since $\spec\ms
  O_{C,S}$ is a limit of open substacks with affine transition maps
  and everything is of finite presentation, we see that there is an
  open substack $U\subset C$ and an extension $V|_U\to Q_U$ as
  desired.  Taking the saturation of the kernel of $V|_U\to Q_U$ in
  $V$ yields the result.
\end{proof}

\section{Lifting $\ms D$-twisted sheaves to $\ms X$-twisted sheaves
  over $\widebar k$}
\label{sec:nonempty}

In this section we prove a result that should be viewed as
a non-commutative analogue of the classical statement that a vector bundle
on a smooth curve in a projective surface whose determinant extends to the
ambient surface itself extends to the surface.

\begin{prop}\label{P:nonempt}
  Let $\ms W$ be a regular locally free $\ms D$-twisted sheaf of rank
  $\ell^2$ and determinant that extends from $D$ to $X$.  There is a locally free $\ms
  X\tensor\widebar k$-twisted sheaf $\ms V$ of rank $\ell^2$ and
  trivial determinant such that $\ms V|_{\ms D}\cong\ms W$.
\end{prop}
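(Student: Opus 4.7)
The plan is to extend $\ms W$ from $\ms D$ to $\ms X\otimes\widebar k$ by a local-to-global construction: first produce a locally free extension on a Zariski open neighborhood $\ms Y\supset\ms D$, then bridge to a globally defined rank-$\ell^2$ twisted sheaf using the elementary-transformation machinery of Section \ref{sec:elem-transf-10}. Throughout I assume $k=\widebar k$.

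For the first stage I would handle the singular and smooth loci of $D$ separately. At a singular point $q\in D$, geometric stability of $\alpha$ forces at least one of the two ramification extensions at $q$ to be trivial over $\widebar k$, so the hypothesis of Proposition \ref{P:artin} is met and Corollary \ref{C:sheaf-version} supplies the unique locally free $\ms X_U$-twisted sheaf of rank $\ell^2$ on the punctured local scheme. Feeding this together with the regular sheaf $\ms W|_\xi$ into Proposition \ref{P:local-existence} produces a locally free $\ms X_q$-twisted sheaf of rank $\ell^2$ whose restriction to the closed residual gerbe matches $\ms W$. Along the smooth part of $\ms D$, I would use Lemma \ref{L:regzar} (regular locally free twisted sheaves of the same rank are Zariski-locally isomorphic), combined with the standard deformation-theoretic fact that a locally free sheaf extends across a smooth divisor in a smooth surface, to extend $\ms W$ Zariski-locally. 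Uniqueness on the overlaps -- via Corollary \ref{C:sheaf-version} on punctured local schemes and Lemma \ref{L:regzar} on the smooth locus -- permits gluing these local extensions into a locally free $\ms X$-twisted sheaf $\mf V$ of rank $\ell^2$ on an open neighborhood $\ms Y\supset\ms D$ with $\mf V|_{\ms D}\cong\ms W$.

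For the second stage, de Jong's theorem furnishes a locally free $\ms X$-twisted sheaf $\ms A$ of rank $\ell$ on a dense open of $\ms X$, and the direct sum $\ms A^{\oplus\ell}$ gives a globally available candidate of rank $\ell^2$. By Corollary \ref{C:sheaf-version}, $\mf V$ and $\ms A^{\oplus\ell}$ are isomorphic on their overlap in $\ms Y\setminus\ms D$ up to a twist by an invertible sheaf. I would then use Lemmas \ref{sec:elem-transf-7} and \ref{sec:elem-transf-8} -- extending locally free sheaves and locally free quotients from $S$ to the ambient Dedekind stack -- to propagate the chosen identification, realizing $\ms V'$ on $\ms X$ as a sequence of elementary transformations of $\ms A^{\oplus\ell}$ tailored to agree with $\mf V$ on $\ms Y$, hence with $\ms W$ on $\ms D$. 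The trivial-determinant condition is then arranged by tensoring with an invertible sheaf whose restriction to $\ms D$ is trivial and by further elementary transformations at closed points of $\ms X\setminus\ms D$, which do not touch the restriction to $\ms D$.

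The main obstacle I anticipate is precisely this second stage: organizing the elementary transformations so that the resulting sheaf is simultaneously locally free, globally defined, and has the prescribed restriction $\ms W$ on $\ms D$, while also producing trivial determinant. The lemmas of Section \ref{sec:elem-transf-10} are arranged to control exactly this patching, but the compatibility at the singular points of $\ms D$ -- where the local sheaves from Proposition \ref{P:local-existence} meet the generic extensions governed by Lemma \ref{L:regzar} -- has to be managed carefully so that the final elementary transformations do not destroy regularity or the determinant normalization.
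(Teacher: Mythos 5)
Your Stage~1 --- patching the local extensions supplied by Proposition~\ref{P:local-existence} at the nodes with Lemma~\ref{L:regzar} along the smooth locus --- is close in spirit to the paper's opening moves, but the paper organizes the gluing differently: it starts from a rank-$\ell^2$ sheaf $\ms V_0$ on $X\setminus D$ coming from de~Jong's theorem and extends it inward across each singular point, so that all local pieces are glued against the \emph{same} reference $\ms V_0$; this sidesteps the cocycle problem you would face in gluing independently chosen local extensions pushed outward from $\ms D$.

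The genuine gap is in Stage~2. Your key claim --- that $\mf V$ and $\ms A^{\oplus\ell}$ are isomorphic up to a twist on the overlap $\ms Y\setminus\ms D$ because of Corollary~\ref{C:sheaf-version} --- does not hold: that corollary is a statement only about the punctured Henselian local scheme at a single node of $D$, not about a general quasi-affine open in the surface, where two rank-$\ell^2$ twisted sheaves can have different second Chern classes and need not be isomorphic even after twisting. Furthermore, Lemmas~\ref{sec:elem-transf-7} and~\ref{sec:elem-transf-8} are one-dimensional (Dedekind-stack) statements; they are designed to extend quotients along a \emph{curve}, not to propagate a sheaf from an open substack of a surface to the whole surface. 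The tool you actually want for extending $\mf V$ from $\ms Y$ to $\ms X$ is simply to take a coherent extension and then its reflexive hull, which on a smooth surface stack is locally free and restricts back to $\mf V$; this is precisely what the paper does after gluing. Only then does the paper invoke the Section~\ref{sec:elem-transf-10} machinery, via Lemma~\ref{L:lift-transform}, to perform one elementary transformation along a curve $\ms C$ meeting $\ms D$ transversely, simultaneously adjusting the restriction $\ms V|_{\ms D}$ to equal $\ms W$ and keeping the determinant trivial. Finally, your proposed determinant fix --- tensoring by a line bundle trivial on $\ms D$ and elementary transformations at closed points of $\ms X\setminus\ms D$ --- cannot work: tensoring a rank-$\ell^2$ sheaf by $L$ changes its determinant only by $L^{\ell^2}$, and modifying at closed points of a surface destroys local freeness.
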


To prove Proposition \ref{P:nonempt} we may assume that $k$ is
algebraically closed.  To start, the local results of Section \ref{sec:formal} immediately
give us the following.  We keep $\ms W$ fixed throughout this section.

\begin{prop}
  There is a locally free $\ms X$-twisted sheaf $\ms V$ of trivial
  determinant such that $\ms V|_{\ms D}$ is a Zariski form of $\ms W$.
\end{prop}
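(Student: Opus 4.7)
The plan is to build $\ms V$ by extending $\ms W$ locally around each closed point of $\ms D$, gluing these extensions using the Zariski-local uniqueness of regular twisted sheaves, and then correcting the determinant globally. At each singular point $q$ of $D$, take $\ms V_\xi := \ms W|_{\xi_q}$ and let $\ms V_U$ be the essentially unique locally free rank-$\ell^2$ twisted sheaf on the punctured neighborhood $\ms X_{U_q}$ provided by Corollary~\ref{C:sheaf-version}; then Proposition~\ref{P:local-existence} yields a locally free $\ms X_q$-twisted sheaf $\ms V_q$ of rank $\ell^2$ extending both data. At points of $\mc D$ on the smooth locus of $D$, infinitesimal deformation theory (the obstructions live on a one-dimensional stack and so vanish), together with Theorem 6.5 of \cite{MR1432058}, produces locally free extensions of $\ms W$ to small Zariski neighborhoods. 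Shrinking so the pairwise overlaps lie in $\ms X \setminus \Sing(\ms D)$, Lemma~\ref{L:regzar} tells us that on each overlap the local models are Zariski-locally isomorphic, so after refining the cover we glue to obtain a locally free $\ms X$-twisted sheaf $\ms V'$ of rank $\ell^2$ defined on an open neighborhood $W$ of $\ms D$ whose restriction to $\ms D$ is a Zariski form of $\ms W$.

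Outside $W$ the stack $\ms X \to X\setminus D$ is an unramified $\m_\ell$-gerbe and $\alpha$ has index dividing $\ell^2$ by de Jong's theorem, so a locally free rank-$\ell^2$ $\ms X$-twisted sheaf exists on $X \setminus D$; gluing it to $\ms V'$ via a standard torsion-free saturation argument yields a locally free $\ms V''$ defined on all of $\ms X$ with the desired restriction to $\ms D$. The final task is to adjust the determinant. Since $\det \ms W = \ms O_{\ms D}$ and the local models were built to realize this trivialization up to the Zariski-form ambiguity (which can be absorbed into the gluing data), $\det \ms V''$ lies in $\ker(\Pic(\ms X) \to \Pic(\ms D))$; I then kill this class by a combination of elementary transformations along smooth auxiliary divisors disjoint from $\ms D$ (each subtracting the divisor's class from the determinant without altering $\ms V''|_{\ms D}$) and a twist by a line bundle supported on $\ms X \setminus \ms D$. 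The main obstacle is precisely this last step: ensuring that the determinant can be made exactly trivial on $\ms X$, not merely trivial on $\ms D$. This requires care because the kernel of $\Pic(\ms X) \to \Pic(\ms D)$ need not consist of classes removable by elementary transformations off $\ms D$, and one may have to pass between different Zariski forms of $\ms W$ — compatibly with the chosen local models — in order to match determinants globally.
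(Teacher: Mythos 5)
Your plan is in the same general spirit as the paper's proof (local construction at singular points via Proposition~\ref{P:local-existence}, extension to all of $\ms X$, determinant correction by elementary transformation), but there is a genuine gap in the determinant step, which you yourself flag as the main obstacle, and a secondary issue with the gluing you propose near $\ms D$.

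On the determinant: you try to make $\det\ms V$ trivial \emph{while keeping $\ms V|_{\ms D}$ fixed}, and you correctly observe that this forces you to modify only along divisors disjoint from $\ms D$ --- which is problematic because $\ker(\Pic(\ms X)\to\Pic(\ms D))$ need not be realizable by such classes. The paper's resolution is to drop that constraint entirely: the statement only asks that $\ms V|_{\ms D}$ be a \emph{Zariski form} of $\ms W$, not that the restriction be preserved through the modification. Concretely, one twists by $\ms O(n)$ for $n\gg 0$ so that $\det\ms V(n)\cong\ms O(C)$ with $C\subset\mc X$ a smooth divisor meeting $\mc D$ transversely and missing the singular residual gerbes; by Tsen's theorem $\ms V(n)|_{\ms C}$ admits an invertible quotient, and the elementary transformation along it produces $\ms V'\subset\ms V(n)$ with trivial determinant. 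Near the singular gerbes $\ms V'$ still agrees with $\ms V$, hence with $\ms W$ by Nakayama, and on the smooth locus of $\ms D$ the restriction is a regular locally free twisted sheaf of rank $\ell^2$, hence a Zariski form of $\ms W$ by Lemma~\ref{L:regzar}. So the flexibility built into ``Zariski form'' is precisely what solves the problem you were stuck on; your closing remark about ``passing between different Zariski forms of $\ms W$'' is exactly the right instinct, but it is not a complication to be managed --- it is the mechanism that makes the argument work.

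On the gluing: you propose covering a neighborhood of $\ms D$ by local models and gluing via Lemma~\ref{L:regzar}. But that lemma only asserts the \emph{existence} of Zariski-local isomorphisms between two regular sheaves; it does not produce a compatible system of identifications on overlaps satisfying the cocycle condition, so the gluing is not automatic. The paper avoids this entirely: it fixes $\ms V_0$ on $X\setminus D$ by de Jong's theorem, uses Proposition~\ref{P:local-existence} to extend over each singular point in a way already \emph{compatible} with $\ms V_0$ on the punctured local neighborhood (so each individual gluing is along a given isomorphism, with no cocycle to verify), and then extends across the remaining codimension-one locus by taking a reflexive hull, a canonical construction that requires no gluing at all. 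Restructuring your argument in this order removes the cocycle issue and also makes your separate treatment of the smooth points of $\ms D$ unnecessary.
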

\begin{proof}
  By de Jong's theorem (the main result of \cite{MR2060023}), there is a $\ms X_{X\setminus D}$-twisted
  sheaf $\ms V_0$ of rank $\ell^2$, which we fix.  Let $\xi\in\ms D$ be a
  singular residual gerbe with image $q\in X$.  By Proposition \ref{P:local-existence},
  there is a locally free $\ms X\times_X\spec \ms O_{X,q}$-twisted
  sheaf $\ms V_\xi$ of rank $\ell^2$ such that $\ms
  V_{\xi}|_\xi\cong\ms W|_{\xi}$ and $\ms V_\xi|_{X\setminus
    D}\cong\ms V_0|_{\spec \ms O_{X,q}}$.  Zariski gluing then extends
  $\ms V_0$ over $\xi$ so that its restriction to $\xi$ is isomorphic
  to $\ms W|_{\xi}$.  By induction on the number of singular points of
  $D$, we conclude that there is an open subscheme $X_0\subset X$
  containing the singular points of $D$ and a locally free $\ms
  X\times_XX_0$-twisted sheaf $\ms V^0$ such that $\ms
  V^0|_\xi\cong\ms W|_\xi$ for each singular residual gerbe $\xi$ of
  $\ms D$.  Taking a reflexive hull of $\ms V^0$ yields a locally free
  $\ms X$-twisted sheaf $\ms V$ with the same local property at each
  $\xi$.

  We claim that $\ms V|_{\ms D}$ is a form of $\ms W$.  To see this,
  note that by Nakayama's lemma this is true in a neighborhood of each
  singular point $q\in D$.  On the other hand, on the smooth locus of
  $\ms D$ any two regular twisted sheaves of the same rank are Zariski
  forms of one another by Lemma \ref{L:regzar}.

It remains to ensure that $\ms V$ has trivial determinant.  To do
this, we may assume after twisting $\ms V$ by a suitable power of $\ms
O(1)$ that $\det\ms V\cong\ms O(C)$ with $C\subset\mc X$ a smooth divisor
meeting $\mc D$ transversely.  By Tsen's theorem, $\ms X|_C$ has
trivial associated Brauer class, so $\ms V|_C$ has invertible
quotients.  Taking the elementary transformation along any such quotient
$\ms V\to\ms Q$ yields a subsheaf $\ms V'\subset\ms V$ with trivial
determinant which is isomorphic to $\ms V$ at each singular residual
gerbe $\xi\in\ms D$, as desired.
\end{proof}

\begin{proof}[Proof of Proposition \ref{P:nonempt}]
Since $\ms V|_{\ms D}$ is a form of $\ms W$, for all sufficiently
large $N$ we can recover $\ms W$ as the kernel of a surjection $\ms
V(N)\to Q$ with $Q$ a reduced $\ms X$-twisted sheaf of dimension $0$
with support equal to $C\cap\mc D$ for a general smooth $C\subset\mc
X$ (belonging to the linear system $|\ms O(\ell^2N)|$) meeting $\mc D$ transversely.  The following Lemma enables us to
lift the elementary transformation to $\ms X$.

\begin{lem}\label{L:lift-transform}
  Let $C\subset\mc X$ be a smooth divisor meeting $\mc D$ transversely
  with preimage $\ms C\subset\ms X$.  Given an invertible quotient
  $\chi:\ms V|_{\ms D}\surj Q$ defined over $C\cap\mc D$, there is an
  invertible quotient $\ms V\to\ms Q$ defined over $\ms C$
  extending $\chi$.
\end{lem}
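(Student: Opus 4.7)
The plan is to transport the problem from $\ms C$ to its coarse moduli space $C$, apply Lemma \ref{sec:elem-transf-8} there, and transport the result back. Since $C$ is a smooth proper curve, Tsen's theorem (invoked in the paragraph just above) shows that $\ms X|_C=\ms C$ has trivial Brauer class and hence admits an invertible $\ms C$-twisted sheaf $\Lambda$. Because $\ms C\to C$ is a tame $\m_\ell$-gerbe, the functor $\ms F\mapsto\pi_\ast(\ms F\tensor\Lambda^\vee)$, where $\pi:\ms C\to C$ is the coarse-space map, is an exact equivalence between coherent $\ms C$-twisted sheaves and coherent $\ms O_C$-modules, carrying locally free sheaves to locally free sheaves of the same rank.

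Under this equivalence, $\ms V|_{\ms C}$ corresponds to a locally free sheaf $V$ of rank $\ell^2$ on $C$, the finite closed substack $\ms S:=\ms C\cap\ms D$ corresponds to the finite closed subscheme $S\subset C$ obtained from the transverse intersection $C\cap\mc D$, and the invertible $\ms S$-twisted quotient $\chi$ corresponds to an invertible quotient $V|_S\surj Q'$. Applying Lemma \ref{sec:elem-transf-8} to the connected separated Dedekind scheme $C$ (which has trivial generic stabilizer) yields a locally free quotient $V\to\ms Q'$ extending $V|_S\to Q'$. The construction in that proof extends the kernel of $V|_S\to Q'$ (of rank $\ell^2-1$) to a locally free subsheaf of the same rank, so by connectedness of $C$ the cokernel $\ms Q'$ is invertible. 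Translating back by tensoring with $\Lambda$ and pulling up to $\ms C$ then produces an invertible $\ms C$-twisted quotient $\ms Q$ of $\ms V|_{\ms C}$ extending $\chi$.

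The main subtlety to verify is the compatibility of the equivalence with restriction to $\ms S$: namely, that $\ms S$-twisted sheaves correspond under $\Lambda|_{\ms S}$ to ordinary sheaves on $S$, and that invertible twisted quotients correspond to invertible ordinary quotients. This follows from tameness of the gerbes involved, which ensures $\pi_\ast$ preserves exactness and invertibility, combined with the fact that $\Lambda$ restricts to an invertible twisted sheaf on each residual gerbe of $\ms S$. With the translation dictionary in hand, the remainder of the argument is a direct appeal to the material of Section \ref{sec:elem-transf-10}, and the only delicate point is the careful bookkeeping of characters at the stacky points.
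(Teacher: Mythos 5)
Your proposal is essentially the paper's own argument: untwist $\ms V|_{\ms C}$ by an invertible $\ms C$-twisted sheaf (your $\Lambda$, the paper's $L$) to get an ordinary sheaf on the tame Dedekind stack $C$, apply Lemma~\ref{sec:elem-transf-8} over $C\cap\mc D$, and twist back up. One small imprecision worth flagging: you refer to $C$ as the ``coarse moduli space'' and later as a ``Dedekind scheme,'' but $C$ is the stacky divisor in $\mc X$, still carrying $\m_\ell$-stabilizers at $C\cap\mc D$; this is harmless because Lemma~\ref{sec:elem-transf-8} is stated for Dedekind stacks with trivial generic stabilizer, which is exactly what $C$ is, but pushing all the way to the honest coarse scheme $\widebar C$ would lose the needed information at those stacky points.
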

\begin{proof}
  Choose an invertible $\ms C$-twisted sheaf $L$ and let $V=\ms V_{\ms
    C}|\tensor L^\vee$ and $\widebar Q=Q\tensor L^\vee$.  By abuse of
  notation, $V$ is a sheaf on the smooth tame Dedekind stack $C$,
  which has a trivial generic stabilizer, and $\widebar Q$ is an
  invertible quotient of $V|_{C\cap \mc D}$.  By Lemma
  \ref{sec:elem-transf-8}, there is an invertible quotient
  $V\to\widetilde Q$ whose restriction to $C\cap\mc D$ is $\widebar
  Q$.   Twisting up by $L$ yields a quotient $\ms V|_{\ms C}\to\ms Q$
  extending the given quotient $\ms V|_{\ms C}\to Q$.  This yields the
  quotient extending $\chi$, as desired.
\end{proof}
Since we can realize $\ms W$ as an elementary transformation of $\ms
V(N)|_{\ms D}$ along an invertible sheaf on $C\cap\ms D$, Lemma
\ref{L:lift-transform} produces an elementary transformation of $\ms
V(N)$ whose restriction to $\ms D$ is $\ms W$ and whose determinant is
trivial, giving a locally free $\ms X$-twisted sheaf of trivial
determinant lifting $\ms W$, as desired.
\end{proof}

\section{Proof of Theorem \ref{T:part}}
\label{sec:irred}

In this section we prove Theorem \ref{T:part}.  The method used is a
fundamental idea that recurs throughout many moduli problems, notably
in the study of moduli of sheaves by O'Grady \cite{MR1376250} and twisted sheaves
by the author \cite{twisted-moduli}, and in the recent work of de Jong, He, and Starr on
rational sections of fibrations over surfaces \cite{starr}.

Let $\Xi$ be the set of connected components of $\ms U\tensor\widebar
k$ and $\Xi(c)$ the set of connected components parametrizing $\ms V$
such that $c(\ms V)=c$.  There is a natural action of $\Gal(\widebar
k/k)$ on $\Xi$ preserving each $\Xi(c)$, so that the Chern class $c$
induces a $\Gal(\widebar k/k)$-equivariant map $\widebar c:\Xi\to\Z$
(where the target has the trivial action).  

\begin{lem}\label{L:Gal-orbits-finite}
  The orbits of $\Xi$ under the action of $\Gal(\widebar k/k)$ are finite.
\end{lem}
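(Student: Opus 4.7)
The plan is to stratify $\Xi$ by the second Chern class and invoke the boundedness of Lemma \ref{L:cover-bdd}. Since $\widebar c : \Xi \to \Z$ is $\Gal(\widebar k/k)$-equivariant with trivial action on $\Z$, it suffices to prove that each fiber $\Xi(c)$ decomposes into finite Galois orbits.

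For fixed $c$, every $\widebar k$-point of $\ms U$ with second Chern class $c$ has some finite $u$-Castelnuovo--Mumford regularity, so the locus in $\ms U \tensor \widebar k$ with Chern class $c$ equals $\bigcup_N \ms U(c, N) \tensor \widebar k$. (Lemma \ref{L:cover-bdd} is stated for trivial determinant, but the identical reduction to $Z$ yields finite type for sheaves of any fixed determinant $\ms L$.) Each $\ms U(c, N)$ is of finite type over $k$, so $\ms U(c, N) \tensor \widebar k$ has only finitely many connected components, on which $\Gal(\widebar k/k)$ acts through a finite quotient and therefore with finite orbits.

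The final step is to transfer finiteness from $\pi_0(\ms U(c, N) \tensor \widebar k)$ to $\Xi(c)$. Given $C \in \Xi(c)$, I would choose $x \in C(\widebar k)$ and $N_0$ with $x \in \ms U(c, N_0)(\widebar k)$, and let $D$ be the connected component of $\ms U(c, N_0) \tensor \widebar k$ containing $x$. Since $\ms U(c, N_0) \subset \ms U$ is open, $D$ is a connected subset of $\ms U \tensor \widebar k$, so $D \subset C$. For any $\sigma \in \Gal(\widebar k/k)$, the inclusion $\sigma(D) \subset \sigma(C)$ together with the pairwise disjointness of distinct connected components of $\ms U \tensor \widebar k$ forces $\sigma(C)$ to be uniquely determined by $\sigma(D)$; hence the map $\sigma \mapsto \sigma(C)$ factors through $\sigma \mapsto \sigma(D)$, and the orbit of $C$ has cardinality at most that of the orbit of $D$, which is finite.

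I do not anticipate any serious obstacle here: the argument is a standard reduction from a locally-of-finite-type stack to its quasi-compact pieces, organized by the Galois-invariant stratification from $\widebar c$. The only bookkeeping point worth flagging is the minor extension of Lemma \ref{L:cover-bdd} from trivial determinant to determinant $\ms L$, which is routine.
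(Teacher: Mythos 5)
Your proof is correct and takes essentially the same route as the paper: both reduce to the finite-type, Galois-stable open substacks $\ms U(c,N)$ from Lemma \ref{L:cover-bdd}, observe that Galois acts through a finite quotient on their finitely many geometric components, and transfer this to components of $\ms U\tensor\widebar k$ by noting that a component of $\ms U(c,N)\tensor\widebar k$ sits inside a unique component of $\ms U\tensor\widebar k$ (the paper phrases this via the disjointness of connected components of a smooth stack, you via disjointness directly). Your flag about Lemma \ref{L:cover-bdd} being stated only for trivial determinant is a fair catch, though as you note it is harmless since the argument on $Z$ is determinant-independent.
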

\begin{proof}
  The Galois action on $\ms U\tensor\widebar k$ preserves $\ms
  U(c,N)\tensor\widebar k$, which is of finite type.  It is elementary
  that there is an open normal subgroup $H_{c,N}\subset\Gal(\widebar
  k/k)$ acting trivially on the set of components of $\ms U(c,N)$.
  Given a connected component $\ms U_0\subset\ms U\tensor\widebar k$,
  any point $\gamma\in\ms U_0$ lies in $\ms U(c,N)$ for some $c,N$, so
  that there is a component $\ms U(c,N)_0$ containing $c$.  If $h\in
  H_{c,N}$ then $h$ sends $\ms U(c,N)_0$ to itself and thus sends $\ms
  U_0$ to a connected component whose intersection with $\ms U(c,N)_0$
  is $\ms U(c,N)_0$. Since all of the stacks in question are smooth,
  any two connected components that intersect are equal, which implies
  that $H_{c,N}$ stabilizes $\ms U_0$.  Thus, $\ms U_0$ has finite
  orbit.
\end{proof}

The main idea in the proof of Theorem \ref{T:part} is the following.  Let $\ms N$ be either $\ms O$ if $\ell$ is odd or $\ms O(S)$ if $\ell=2$.  
Suppose $\ms W$ is a locally free $\ms D$-twisted
sheaf of rank $\ell^2$ and determinant $\ms N$ (see Section \ref{sec:existence-d}).  The usual
calculations in deformation theory show that $\ms M_{\ms D}(\ell^2,\ms
N)$ is a smooth stack over the base so that $\ms W$ defines
a geometrically integral connected component $\ms M_{\ms D}(\ms W)$.
(Note: this holds even when $\ms D$ is not geometrically connected
over $k$!)

Restriction defines a morphism $\operatorname{res}:\ms U\to\ms
M_{\ms D}(\ell^2,\ms N)$.

\begin{notn}
  Write $\ms U(\ms W)$ for the preimage of the open substack
  $\ms M_{\ms D}(\ms W)$ under the restriction morphism $\operatorname{res}$ described above.  Denote the
  set of connected components of $\ms U(\ms W)$ by $\Xi(\ms W)$.
\end{notn}
Since $\ms U$ is smooth (but not separated!), the inclusion $\ms U(\ms
W)$ induces an injective Galois-equivariant morphism $\Xi(\ms
W)\inj\Xi$.  Lemma \ref{L:Gal-orbits-finite} implies that the Galois
orbits of $\Xi(\ms W)$ are therefore finite.  We will write $\Xi(\ms
W)(c)=\Xi(\ms W)\cap\Xi(c)$.

Before stating the main result of this section, we require one more
definition.

\begin{defn}
  Call a sequence of elements $x_1,x_2,\ldots,x_n\in\Xi$
  \emph{equisingular\/} if there is a nonnegative integer $m$ and
  sheaves $\ms V_i\in x_i$ such that for all $i$ the sheaf $\ms
  V_i^{\vee\vee}/\ms V_i$ is isomorphic to the pushforward of an
  invertible $\ms X\times_XS_i$-twisted sheaf to $\ms X$, where $S_i$ is a finite
  closed subscheme of $X\setminus D$ of length $m$.
\end{defn}

In particular, an equisingular sequence of length $1$ corresponds to a
component containing a sheaf $\ms V$ such that $\ms V^{\vee\vee}/\ms
V$ is a direct sum of invertible twisted sheaves supported on closed
residual gerbes of $\ms X\setminus\ms D$.

\begin{remark}\label{R:ok}
  The argument of Lemma \ref{L:U-nonempty} applied to Proposition
  \ref{P:nonempt} shows that there is an equisingular element of
  $\Xi(\ms W)$.
\end{remark}

\begin{prop}\label{P:contract}
  There is a $\Gal(\widebar k/k)$-equivariant map $\tau:\Xi(\ms
  W)\to\Xi(\ms W)$ such that for any $c$ and any equisingular sequence 
  $$x_1,x_2\in\Xi(\ms W)(c)$$ there is a natural number $n$ such that $$\tau^{\circ
    n}(x_1)=\tau^{\circ n}(x_2).$$
\end{prop}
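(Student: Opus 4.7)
The plan is to define $\tau$ as an elementary modification that adds one generic singularity to a sheaf, with a twist to preserve the trivial determinant, and then to invoke an O'Grady-type stabilization argument to collapse equisingular components of equal Chern class under iteration.

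Given $x\in\Xi(\ms W)$, take a generic $\ms V\in x$ for which $\ms V^{\vee\vee}/\ms V$ is the pushforward of an invertible twisted sheaf on a reduced finite subscheme of $\ms X\setminus\ms D$. Choose a general closed point $p\in X\setminus D$ where $\ms V$ is locally free, let $Q$ be a general length-$\ell^2$ quotient of $\ms V|_p$, and form the elementary transformation $\ms V'=\ker(\ms V\to i_{p\ast}Q)\otimes\pi^\ast\ms O(p)$, where the twist is chosen so that $\det\ms V'\cong\ms O$. Define $\tau(x)$ to be the component containing $\ms V'$. The parameter scheme of allowable $(p,Q)$ is an open subset of a projective bundle over $X\setminus D$, hence is irreducible, so $\tau(x)$ is independent of the choice. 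Galois-equivariance is immediate from the naturality of the construction. The operation $\tau$ increases $c(\ms V)$ by a fixed amount $\delta>0$ and leaves the restriction to $\ms D$ unchanged, so $\tau$ sends $\Xi(\ms W)(c)$ into $\Xi(\ms W)(c+\delta)$.

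Now fix equisingular $x_1,x_2\in\Xi(\ms W)(c)$ with representatives $\ms V_1,\ms V_2$ whose double-dual quotients have length $m$. Generic members of $\tau^n(x_i)$ have singular support of length $m+n$ in $X\setminus D$ with invertible twisted-quotient data of a fixed combinatorial type. Consider the locally closed substack $\ms U(\ms W)^{(m+n)}\subset\ms U(\ms W)(c+n\delta)$ parametrizing such sheaves. There is a natural morphism to $\operatorname{Hilb}^{m+n}(X\setminus D)$ recording the singular support; the target is irreducible, and modulo a connected group action from the choices of local quotient data, the fibers parametrize reflexive (hence locally free) $\ms X$-twisted sheaves of rank $\ell^2$ and trivial determinant whose restriction to $\ms D$ lies in $\ms M_{\ms D}(\ms W)$.

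The main obstacle is to show that the moduli of such reflexive lifts is irreducible once $n$ is large, so that $\ms U(\ms W)^{(m+n)}$ itself is irreducible and hence $\tau^n(x_1)=\tau^n(x_2)$. I would approach this by combining the unobstructedness provided by Lemma \ref{L:U} with the lifting results of Sections \ref{sec:formal} and \ref{sec:nonempty}: together they show that any two such reflexive sheaves, agreeing on $\ms D$ up to deformation, can be joined by a family of $\ms X$-twisted sheaves provided enough singular freedom is allowed, which is precisely what adding many generic singularities furnishes. A dimension count, together with the smoothness of $\ms U$, should then force the relevant stratum to be connected and hence irreducible, completing the proof.
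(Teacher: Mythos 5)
Your definition of $\tau$ has the same germ as the paper's (elementary modification at a generic point of $X\setminus D$), but the specific formula is ill-posed: $\pi^\ast\ms O(p)$ does not make sense when $p$ is a closed point of a surface, and no twist is needed anyway, since a colength-finite modification supported in codimension $2$ leaves the determinant unchanged. Also, a ``length-$\ell^2$ quotient of $\ms V|_p$'' is just the whole fiber, so the kernel would be $\ms V\cdot\ms I_p$; the paper instead takes a rank-$1$ (length-$1$) quotient, which already suffices, and this discrepancy matters because it controls the rate at which iterates of $\tau$ change $c$.

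The more serious gap is in the contracting argument. Your plan is to fiber the stratum $\ms U(\ms W)^{(m+n)}$ over $\operatorname{Hilb}^{m+n}(X\setminus D)$ and then argue that, ``modulo a connected group action,'' the fibers parametrize locally free $\ms X$-twisted sheaves with given restriction behavior on $\ms D$, and that a dimension count forces irreducibility. But the fiber of that map over a fixed support scheme is essentially the moduli of locally free $\ms X$-twisted sheaves of rank $\ell^2$ together with finite quotient data, and the irreducibility of that moduli is precisely the thing one is trying to prove (it is Theorem~\ref{T:part} in disguise). A dimension count gives pure dimension but cannot by itself give connectedness, and unobstructedness/smoothness only rules out embedded components, not multiple disjoint smooth components. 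Invoking the lifting results of Sections~\ref{sec:formal}--\ref{sec:nonempty} likewise only produces \emph{some} point in each component, not a path between two given points. So the proposed route is circular at the crucial step.

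The paper circumvents this as follows, and you will want to absorb the two technical lemmas that carry the weight. First, given equisingular $x_1,x_2$, iterate $\tau$ just enough to make the representing sheaves $\ms V_1,\ms V_2$ everywhere Zariski-locally isomorphic. Second (Lemma~\ref{L:sliding-cok}), by taking further generic finite-colength subsheaves one can force $\ext^2_0=0$ and, crucially, make the restriction map on miniversal equideterminantal deformation spaces $\Def_0(\ms V_i)\to\Def_0(\ms V_i|_{\ms D})$ surjective; this lets one deform so that $\ms V_1|_{\ms D}\cong\ms V_2|_{\ms D}$ exactly, not merely up to a deformation. Third (Lemma~\ref{L:cok}, a twisted Bertini statement proved by a matrix-locus codimension count), a general map $\ms V_1\to\ms V_i(N)$ has cokernel an invertible twisted sheaf on a smooth curve $\ms C$ meeting $\ms D$ transversely, and the two cokernels $\ms Q_1,\ms Q_2$ differ by a degree-$0$ invertible sheaf pulled back from the coarse curve $\widebar C$. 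Finally, the Jacobian $\Pic^0_{\widebar C/k}$ is irreducible, and the universal extension over it connects the two subsheaves $\ms V_1',\ms V_2'$ in $\ms U(\ms W)$. This is the O'Grady-style ``overshoot and interpolate along a curve'' argument; the irreducibility input is the connectedness of $\Pic^0$ of a smooth curve, not any a priori irreducibility of a stratum of the moduli stack.
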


Before producing $\tau$, let us show how Proposition \ref{P:contract}
proves Theorem \ref{T:part}.

\begin{proof}[Proof of Theorem \ref{T:part} using Proposition \ref{P:contract}]
  Let $x\in\Xi(\ms W)$ be any equisingular element (see Remark \ref{R:ok}).  By Lemma
  \ref{L:Gal-orbits-finite}, the Galois orbit of $x$ is finite, say
  $x=x_1,x_2,\ldots,x_m$, and is entirely contained in $\Xi(\ms
  W)(\widebar c(x))$.  Moreover, $x_1,x_2,\ldots,x_m$ are equisingular
  (as one can see by applying the Galois action to a sheaf
  representing $x$).  By Proposition \ref{P:contract}, there is an
  element $y\in\Xi$ and an iterate $\tau'$ of $\tau$ such that
  $\tau'(x_i)=y$ for all $i=1,\ldots,m$.  For any $g\in\Gal(\widebar
  k/k)$, we have that $g\cdot y=g\cdot\tau'(x)=\tau'(g\cdot x)=y$, so
  that $y$ is Galois-invariant.  But then $y$ corresponds to a
  geometrically integral component $\ms S\subset\ms U(\ms W)$, as
  desired.  Indeed, let $\ms I\subset\ms O_{\ms U(\ms
    W)\tensor\widebar k}$ be the ideal sheaf of the component $\ms
  U_y\subset\ms U(\ms W)\tensor\widebar k$ corresponding to $y$.
  Choose a smooth cover $U\to\ms U(\ms W)$ and let $\ms I'=\ms
  I\tensor\ms O_{U\tensor\widebar k}$.  Since $y$ is Galois-fixed we
  have that $\ms I'$ is preserved by the canonical descent datum on
  $\ms O_{U\tensor\widebar k}$ induced by the extension
  $k\subset\widebar k$.  Descent theory for schemes shows that $\ms
  I'$ is the base change of a sheaf of ideals $\ms J\subset\ms O_U$
  cutting out an open subscheme $U_0\subset U$.  Since
  $U_0\tensor\widebar k$ is equal to the preimage of its image in $\ms
  U(\ms W)\tensor\widebar k$, we conclude the same about $U_0$, which
  therefore corresponds to an open subscheme $\ms S\subset\ms U(\ms
  W)$ such that $\ms S\tensor\widebar k=Z(\ms I)$.
\end{proof}

It remains to prove Proposition \ref{P:contract}.  The map $\tau$ is
defined as follows.  

\begin{construction}\label{cons:tau}
  Given a component $y$ of $\ms U(\ms W)\tensor\widebar k$
  corresponding to a locally free $\ms X\tensor\widebar k$-twisted
  sheaf $\ms V$ of rank $\ell^2$ and trivial determinant lying in $\ms
  U(\ms W)$, define a new sheaf $\ms V'$ by choosing a point $x\in
  X(\widebar k)\setminus D(\widebar k)$ around which $\ms V$ is
  locally free and forming an exact sequence
$$0\to\ms V'\to\ms V\to L_x\to 0,$$
where $L_x$ is a locally free $\ms X\times_X x$-twisted sheaf of rank
$1$ and $\ms V\to L_x$ is a surjection.  Since $\ms V'|_{\ms D}$ is
isomorphic to $\ms V|_{\ms D}$, the sheaf $\ms V'$ determines a new
component $\tau(y)\in\Xi(\ms W)$.
\end{construction}
\begin{lem}\label{L:well-defd}
  Construction \ref{cons:tau} is well-defined and Galois-equivariant.
\end{lem}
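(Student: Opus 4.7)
The plan is to verify three things in order: (a) the sheaf $\ms V'$ actually lies in $\ms U(\ms W)$; (b) the resulting component $\tau(y)$ does not depend on the choices of $\ms V \in y$, the point $x$, or the quotient $\ms V \to L_x$; and (c) the assignment is equivariant under $\Gal(\widebar k/k)$.

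For (a), I would first observe that $L_x$ is supported in codimension two, so taking determinants of the short exact sequence gives $\det \ms V' \cong \det \ms V \cong \ms O$, and that $\ms V'|_{\ms D} = \ms V|_{\ms D}$ since $x \notin D$, so $\ms V'$ still restricts into $\ms M_{\ms D}(\ms W)$. The only nontrivial point is preservation of unobstructedness. Applying $\hom(-, \ms V \otimes \omega_{\ms X})$ to $0 \to \ms V' \to \ms V \to L_x \to 0$ and using that $\sext^j(L_x, \ms V \otimes \omega_{\ms X}) = 0$ for $j < 2$ (by a Koszul computation, since $\ms V \otimes \omega_{\ms X}$ is locally free and $L_x$ is supported on a codimension-two residual gerbe), the local-to-global spectral sequence gives $\ext^j(L_x, \ms V \otimes \omega_{\ms X}) = 0$ for $j < 2$ and hence $\hom(\ms V, \ms V \otimes \omega_{\ms X}) \simto \hom(\ms V', \ms V \otimes \omega_{\ms X})$. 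Applying $\hom(\ms V', -)$ to the same sequence twisted by $\omega_{\ms X}$ yields an injection $\hom(\ms V', \ms V' \otimes \omega_{\ms X}) \hookrightarrow \hom(\ms V', \ms V \otimes \omega_{\ms X})$. Combining and restricting to traceless parts gives $\hom_0(\ms V', \ms V' \otimes \omega_{\ms X}) \hookrightarrow \hom_0(\ms V, \ms V \otimes \omega_{\ms X})$, and the right-hand side vanishes by Serre duality and the hypothesis $\ext^2_0(\ms V, \ms V) = 0$.

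For (b), I would build a parameter stack $\mc Z_y$ whose points are triples $(\ms V, x, q)$, with $\ms V$ a $\widebar k$-point of $y$, $x$ a point of $X \setminus D$ at which $\ms V$ is locally free, and $q\colon \ms V|_x \surj L_x$ a rank-one twisted quotient. The locally-free locus is open and dense in $y \times (X \setminus D)$, and the fiber over a fixed $(\ms V, x)$ is a geometrically integral $\P^{\ell^2 - 1}$ of quotients, so $\mc Z_y$ is geometrically integral. The universal elementary transformation defines a morphism $\mc Z_y \to \ms U(\ms W)$ by (a), whose image therefore meets a single connected component of $\ms U(\ms W) \otimes \widebar k$; I set $\tau(y)$ to be that component. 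Part (c) then follows because the whole construction is intrinsic: a $\sigma \in \Gal(\widebar k / k)$ carries $\mc Z_y$ isomorphically onto $\mc Z_{\sigma \cdot y}$ compatibly with the elementary-transformation morphism, so $\tau(\sigma \cdot y) = \sigma \cdot \tau(y)$.

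The main obstacle is the preservation of unobstructedness in (a); once that inequality of traceless $\hom$-spaces is in hand, the rest is a formal consequence of the irreducibility of $\mc Z_y$ and the naturality of the construction.
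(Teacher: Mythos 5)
Your proof is correct and takes essentially the same approach as the paper's: exhibit all possible outputs of the construction as a connected family of unobstructed sheaves, conclude they land in a single component, and read off Galois-equivariance from the naturality of the data. You supply more detail than the paper in two places — a genuine spectral-sequence/Serre-duality verification that unobstructedness passes from $\ms V$ to $\ms V'$ (the paper merely asserts this), and an explicit parameter stack $\mc Z_y$ ranging over all of $y$ rather than a fixed representative — but the underlying strategy is identical.
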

\begin{proof}
  Let $O\subset X\setminus D$ be the open subscheme over which $\ms V$
  is locally free.  Since the family of invertible quotients of the
  restriction of $\ms V$ to a point $x\in O$ is connected, we see that
  all quotients $\ms V'$ arising as in Construction \ref{cons:tau} lie
  in a connected family.  On the other hand, since $\ms V$ is
  unobstructed so is $\ms V'$, and this implies that any two objects
  lying in a connected family must lie in the same connected
  component.

  Galois-equivariance of $\tau$ follows from the argument of the
  preceding paragraph, along with the fact that the Galois group sends
  a pair $(x,\ms V\surj L_x)$ with $x\in O$ to another such pair.
\end{proof}

\begin{remark}
  As a consequence of Lemma \ref{L:well-defd}, we can compute the
  $m$th iterate of $\tau$ by taking an invertible quotient over a
  finite reduced subscheme of $O$ of length $m$ (where $O$ still
  denotes the locus over which $\ms V$ is locally free).
\end{remark}

It remains to verify that $\tau$ is a contracting map (in the weak
sense enunciated in Proposition \ref{P:contract}).  Since $x_1$ and
$x_2$ are equisingular, we can choose $\ms V_i\in x_i$, $i=1,2$, such
that $\ms V_i^{\vee\vee}/\ms V_i$ is supported at $m$ closed residual
gerbes.  Suppose $\supp(\ms V_1^{\vee\vee}/\ms V_1)\cap\supp(\ms
V_2^{\vee\vee}/\ms V_2)$ has $m'$ closed residual gerbes.  Applying
$\tau^{\circ m-m'}$ to $x_1$ and $x_2$ we can assume that $\ms V_1$
and $\ms V_2$ are everywhere Zariski-locally isomorphic (by taking
quotients of $\ms V_1$ along $\supp(\ms V_2^{\vee\vee}/\ms
V_2)\setminus\supp(\ms V_1^{\vee\vee}/\ms V_1)$ and similarly for $\ms
V_2$).  We are thus reduced to the following.

\begin{prop}\label{P:contracting}
  Suppose $\ms V_1$ and $\ms V_2$ are two torsion free $\ms
  X\tensor\widebar k$-twisted sheaves of rank $\ell^2$ and trivial
  determinant belonging to $\ms U(\ms W)(c)$ which are everywhere
  Zariski-locally isomorphic.   Then there are coherent
  subsheaves $\ms V_i'\subset\ms V_i$, $i=1,2$, such that
  \begin{enumerate}
  \item $\ms V_i/\ms V_i'$ is reduced and supported over $m$ closed
    points of $X\setminus D$, with $m$ independent of $i$;
  \item there is a connected $\widebar k$-scheme $T$ containing two
    points $[1],[2]\in T(\widebar k)$ and a morphism $$\omega:T\to\ms
    U(\ms W)$$ such that $\omega([i])\cong [\ms V_i']$ for $i=1,2$.
  \end{enumerate}
  In other words, $\ms V_1'$ and $\ms V_2'$ give the same element of
  $\Xi(\ms W)(c+md)$, where $d=\deg u$.
\end{prop}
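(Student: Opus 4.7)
The plan is to show that $\ms V_1'$ and $\ms V_2'$ lie in the same connected component of $\ms U(\ms W)(c+md)$ by constructing them as modifications of $\ms V_1$ and $\ms V_2$ at a common set of points using quotients that correspond under a generic bridging map $\phi:\ms V_1\to\ms V_2(N)$ produced from the Zariski-local isomorphism hypothesis.

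Since $\ms V_1$ and $\ms V_2$ are everywhere Zariski-locally isomorphic, the sheaf $\mathcal H := \shom_{\mc X}(\ms V_1,\ms V_2)$ is locally free of rank $\ell^4$ on $\mc X$, locally isomorphic to $\send_{\mc X}(\ms V_1)$.  For $N\gg 0$ (by Serre vanishing applied through the uniformization $u:Z\to\ms X$ and the smoothness of $u$), $\mathcal H(N)$ is globally generated with vanishing higher cohomology, and a generic $\phi\in\H^0(\mc X,\mathcal H(N))$ yields an injection $\phi:\ms V_1\hookrightarrow\ms V_2(N)$ whose cokernel is an invertible twisted sheaf on a smooth divisor $C_\phi\in|\ms O_{\mc X}(\ell^2 N)|$ meeting $\mc D$ transversely.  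A Bertini-type argument on the smooth stack $\mc X$ additionally lets us arrange $C_\phi$ to avoid any prescribed finite subset of $X\setminus D$.

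Now fix $m$ general closed points $p_1,\ldots,p_m\in X\setminus(D\cup C_\phi)$ and a general $1$-dimensional quotient $L_i$ of $\ms V_1|_{p_i}$. Since $\phi_{p_i}:\ms V_1|_{p_i}\simto\ms V_2(N)|_{p_i}$ is a fiber-wise isomorphism, each $L_i$ determines a corresponding $1$-dimensional quotient $L_i'$ of $\ms V_2|_{p_i}$ via $\phi_{p_i}^{-1}$ combined with a chosen trivialization of $\ms O(N)|_{p_i}$.  Set
$$\ms V_1':=\ker\bigl(\ms V_1\to\bigoplus L_i\bigr),\qquad \ms V_2':=\ker\bigl(\ms V_2\to\bigoplus L_i'\bigr).$$
Condition~(1) is immediate, and one checks that the trivial determinant and the Chern class $c+md$ are preserved.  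A snake-lemma diagram chase using the defining sequences shows moreover that $\phi$ restricts to an injection $\ms V_1'\hookrightarrow\ms V_2'(N)$ with the \emph{identical} cokernel $Q_\phi$ on $C_\phi$, so the pair $(\ms V_1',\ms V_2')$ inherits the full bridging structure of the pair $(\ms V_1,\ms V_2)$.

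For condition~(2), consider the irreducible parameter space $T^\circ$ of admissible tuples $(\phi,(p_i),(L_i))$; there are two natural universal families and hence two morphisms $\omega_1,\omega_2:T^\circ\to\ms U(\ms W)(c+md)$, sending a tuple to $[\ms V_1'(\phi,p,L)]$ and $[\ms V_2'(\phi,p,L)]$ respectively.  Using the smoothness of $\ms U$ from Lemma~\ref{L:U} together with the common-cokernel identity above, one shows that as $\phi$ degenerates along a suitable one-parameter family in $\H^0(\mc X,\mathcal H(N))$, the two flat families $\omega_1,\omega_2$ can be interpolated by a single flat family in $\ms U(\ms W)$.  Taking $T$ to be a connected component of this interpolation, with $[1],[2]\in T$ the points specializing to $\ms V_1'$ and $\ms V_2'$, yields the required morphism $\omega$.

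\textbf{Main obstacle.}  The central difficulty is the last step: the construction naturally produces two morphisms $\omega_1,\omega_2$ rather than one, and combining them into a single $\omega:T\to\ms U(\ms W)$ requires either an explicit one-parameter degeneration of $\phi$ along which the two modification constructions converge to a common sheaf, or a deformation-theoretic argument using the preserved bridging structure and the smoothness of $\ms U$ to conclude that the images of $\omega_1$ and $\omega_2$ lie in the same connected component.  The Bertini analysis on the smooth stack $\mc X$ (to guarantee smoothness of $C_\phi$ and avoidance of $\mc D$ and the chosen points) is the main technical input, and tracking twist factors and determinants carefully through the identifications at each $p_i$ is essential for landing in the intended moduli component.
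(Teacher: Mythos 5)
Your proposal correctly identifies that a general map $\ms V_1\to\ms V_2(N)$ can be arranged (via Bertini on the stack) to have invertible cokernel on a smooth curve avoiding $\ms D$, and your ``common cokernel'' observation is fine as far as it goes. But the proposal has a genuine gap exactly where you flag it, and the gap is not cosmetic: a single bridging map $\phi:\ms V_1\to\ms V_2(N)$ sets up a relation between $\ms V_1'$ and $\ms V_2'(N)$, but it does not by itself put $\ms V_1'$ and $\ms V_2'$ in a \emph{single} connected family in $\ms U(\ms W)$. Two morphisms $\omega_1,\omega_2:T^\circ\to\ms U(\ms W)$ whose fibers are related by elementary transformations need not have images meeting a common component, and ``degenerate $\phi$ so that the constructions converge'' is not a construction: as $\phi$ degenerates to a non-injective map you lose the presentation entirely, and there is no mechanism in your set-up that produces a family interpolating the two outputs. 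This is the substance of the proposition, not a finishing detail.

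The idea you are missing is the one that makes the O'Grady-style argument work: present \emph{both} $\ms V_1(N)$ and $\ms V_2(N)$ as extensions with the \emph{same} sub-object $\ms V_1$. That is, one chooses (for a single large $N$ and a single smooth curve $C$ meeting $\mc D$ transversely) two general maps $\ms V_1\to\ms V_1(N)$ and $\ms V_1\to\ms V_2(N)$ whose cokernels $\ms Q_1,\ms Q_2$ are invertible twisted sheaves on $\ms C=C\times_X\ms X$. With the common source $\ms V_1$, the problem reduces to connecting $\ms Q_1$ and $\ms Q_2$ inside the Picard stack of $\ms C$ and then sweeping out extensions $0\to\ms V_1\to\ms V\to\mf Q\to 0$ over the resulting parameter scheme; the open locus of unobstructed torsion-free fibers in the total space of $\ext^1(\mf Q,\ms V_1)$ is then the desired connected $T$. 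For the $\Pic^0$ interpolation to apply, one must first arrange $\ms Q_1|_{\ms D}\cong\ms Q_2|_{\ms D}$, which in turn requires reducing to the case $\ms V_1|_{\ms D}\cong\ms V_2|_{\ms D}$ (not just ``locally isomorphic on $\ms D$''). Your proposal omits that reduction. In the paper it is handled before the bridging step by a sliding-cokernel lemma (your Lemma~\ref{L:sliding-cok} in the source): one passes to subsheaves $\ms V_i'\subset\ms V_i$ for which the restriction $\ms U(\ms W)\to\ms M(\ms W)$ is dominant, so that, after deforming, the two restrictions to $\ms D$ can be made literally isomorphic and the strict Henselizations identified. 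Without that normalization, and without the common-source extension structure, the interpolation you gesture at has no foothold.

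So: the Bertini input you identify is genuine and necessary, and the modifications at points $p_i$ are a fine way to phrase iterates of $\tau$; but the proof still needs (a) the preliminary reduction to matching restrictions on $\ms D$, and (b) the double-extension-with-common-kernel device feeding into a $\Pic^0(\widebar C)$ interpolation and a universal extension space. Your single-$\phi$ set-up cannot substitute for (b), and without (a) the $\Pic^0$ step is unavailable.
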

\begin{proof}
  The proof is very similar to the proof in Paragraph
  3.2.4.19 of \cite{twisted-moduli}.  We present it using a series of
  lemmas.

  \begin{lem}\label{L:sliding-cok}
    Suppose $\ms E\subset\ms X$ is an effective Cartier divisor and
    $\ms G\subset\ms X$ is a non-empty open substack.  Given a torsion
    free $\ms X$-twisted sheaf $\ms F$ of rank $r$ prime to $p$ and
    trivial determinant such that $\ms F|_{\ms E}$ is locally free,
    there exists a coherent subsheaf $\ms F'\subset\ms F$ such that
    \begin{enumerate}
    \item the sheaf $\ms F'$ is unobstructed;
    \item the quotient $\ms F/\ms F'$ is reduced and $0$-dimensional
      with support contained in $\ms G$;
    \item the restriction map on equideterminantal miniversal 
      deformation spaces $$\Def_0(\ms F)\to\Def_0(\ms F|_{\ms E})$$ is surjective.
    \end{enumerate}
  \end{lem}
  \begin{proof}
    Let $\ms L$ be an invertible sheaf on $\ms X$.  We claim that
    there is a subsheaf of the required type $\ms F'\subset\ms F$ such
    that $\ext^2_0(\ms F'\tensor\ms L,\ms F')=0$.  To see this, note
    first that by Serre duality and the hypothesis that $\ell$ is
    prime to $p$ we know that $\ext^2_0(\ms
    F\tensor\ms L,\ms F)$ is dual to $\hom_0(\ms F,\ms F\tensor\ms
    L\tensor K_{\ms X})$ (and similarly for $\ms F'$), so it suffices
    to prove that one can make $\hom_0(\ms F',\ms F'\tensor\ms L)$
    vanish (replacing $\ms L\tensor K$ by $\ms L$).  In addition, note
    that when $\ms F/\ms F'$ has finite support in the locally free
    locus of $\ms F$, there is a canonical inclusion $$\hom_0(\ms
    F',\ms F'\tensor\ms L)\inj\hom_0(\ms F,\ms F\tensor\ms L)$$
    identifying the former with the space of homomorphisms which
    preserve (in fibers) the kernel of the induced quotient map $\ms
    F_{\supp \ms F/\ms F'}\to\ms F/\ms F'$.  (This last inclusion is
    produced by realizing $\ms F$ locally as the reflexive hull of
    $\ms F'$, where they differ.)

    Since the homomorphisms in question are traceless, they cannot
    preserve all codimension $1$ subspaces of a general geometric
    fiber.  Thus, for a general point $x\in\ms G$ and a general
    reduced quotient $$\ms F\surj\ms F_x\surj Q$$ supported at $x$ with
    kernel $\ms F'$, the inclusion $$\hom_0(\ms F',\ms F'\tensor\ms
    L)\inj\hom_0(\ms F,\ms F\tensor\ms L)$$ is not surjective.  By
    induction on $\dim\hom_0(\ms F,\ms F\tensor\ms L)$ we can find a
    sequence of such subsheaves for which the associated $\hom_0$ is
    trivial, as desired.

    Now, given a sheaf $\ms F$ locally free around $\ms E$, the
    tangent map $$\Def_0(\ms F)\to\Def_0(\ms F|_{\ms E})$$ is given by
    the restriction map $$(\ext_{\ms X}^1)_0(\ms F,\ms
    F)\to(\ext^1_{\ms E})_0(\ms F|_{\ms E},\ms F|_{\ms E}),$$ which by
    the cher-\`a-Cartan isomorphism is canonically isomorphic to the
    restriction map $$\ext^1_0(\ms F,\ms F)\to\ext^1_0(\ms F,\ms
    F|_{\ms E})$$ (with both $\ext$ spaces on $\ms X$).  The cokernel
    of this map is contained in $\ext^2_0(\ms F,\ms F(-\ms E))$, and
    by the first two paragraphs of this proof we can find $\ms
    F'\subset\ms F$ of the desired form so that $\ext^2_0(\ms F,\ms
    F(-\ms E))=0$.  Taking a further subsheaf, we may also assume that
    $\ext^2_0(\ms F',\ms F')=0$, so that $\ms F'$ is unobstructed, as
    desired.
  \end{proof}
  Given $\ms V_1,\ms V_2\in\ms U(\ms W)$, we can thus find
  (unobstructed) subsheaves $\ms V_i'\subset\ms V_i$ such that the
  restriction morphism $\ms U(\ms W)\to\ms M(\ms W)$ is dominant at
  $\ms V_i'$ for $i=1,2$.  Deforming $\ms W$ to the generic member of
  $\ms M(\ms W)$ and following by deformations of $\ms V_i'$, we may
  thus assume that $\ms V_1'|_{\ms D}\cong\ms V_2'|_{\ms D}$.  Taking
  further subsheaves if necessary, we may assume that for each
  geometric point $x\to X$, the strict Henselizations $\ms
  V_i'|_{\spec\ms O_{X,x}^{sh}}$ are isomorphic.  We will relabel $\ms
    V_i'$ by $\ms V_i$ (acknowledging that we have already started
    iterating $\tau$ on the original components).

    By Proposition \ref{L:cok}, for sufficiently large $N$, the cokernel $\ms Q$ of a
    general map $\ms V_1\to\ms V_i(N)$, $i=1,2$, is an invertible $\ms
    X$-twisted sheaf supported on the preimage of a smooth curve $C$ in $X$ in
    the linear system $|\ell^2NH|$ meeting $D$ transversely. In particular,
    there exists one such curve $C$ and two invertible $C\times_X\ms X$-twisted
    sheaves $\ms Q_1$ and $\ms Q_2$ such that there are extensions $$0\to\ms
    V_1\to\ms V_i(N)\to\ms Q_i\to 0$$ for $i=1,2$.

  \begin{remark}\label{R:ext}
  Choosing isomorphisms $\ms W\simto\ms V_i|_{\ms D}$, we may assume
  (since $N$ is allowed to be arbitrarily large) that each extension
  has the same restriction to an extension
$$0\to\ms W\to\ms W(N)\to Q|_{\ms D}\to 0$$
of sheaves on $\ms D$.
\end{remark}

Write $\ms C:=C\times_X\ms X$ and let $\iota:\ms C\to\ms X$ be the
canonical inclusion map.
  \begin{lem}
    There is an irreducible $k$-scheme $T$ with two $k$-points $[1]$
    and $[2]$ and an invertible $\ms C\times T$-twisted sheaf $\mf Q$
    such that $\mf Q_{[i]}\cong\ms Q_i$ for $i=1,2$.
  \end{lem}
  \begin{proof}
    By Remark \ref{R:ext}, we know that $\ms Q_1|_{\ms D}\cong\ms
    Q_2|_{\ms D}$.  Furthermore, we have the equality $$[\ms
    Q_i]=[\LL\iota^\ast\ms V_i(N)]-[\LL\iota^\ast\ms V_1]$$ in $K(\ms
    C)$.  Since $c(\ms V_1)=c(\ms V_2)$ and $\det\ms V_1\cong\det\ms
    V_2$, we conclude that $u^\ast\ms Q_1$ has the same Hilbert
    polynomial as $u^\ast\ms Q_2$.

    Thus, we find that $\ms Q_1$ and $\ms Q_2$ are two invertible
    sheaves on $\ms C$ with the same degree when pulled back to the
    curve $Z\times_{\mc X}C$ and with isomorphic restrictions to every
    residual gerbe of $\ms C$.  The sheaf $\ms Q_1\tensor\ms Q_2^\vee$
    is thus the pullback of an invertible sheaf $\Gamma$ of degree $0$
    on the coarse moduli space $\widebar C$ of $C$ (which is the
    coarse moduli space of $\ms C$).  Since $C$ intersects $\ms D$
    transversely, $\widebar C$ is a smooth curve in $X$.  Let $G$ be a
    tautological invertible sheaf over $\widebar C\times
    \Pic^0_{\widebar C/k}$, and write $[1]$ for the point
    corresponding to the trivial invertible sheaf and $[2]$ for the
    point parametrizing $\Gamma$.  The sheaf $$G_{\ms
      C\times\Pic^0_{\widebar C/k}}\tensor(\ms Q_1)_{\ms
      C\times\Pic^0_{\widebar C/k}}$$ on $\ms C\times\Pic^0_{\widebar
      C/k}$ gives the desired irreducible interpolation.
  \end{proof}
  The end of the proof of Proposition \ref{P:contracting} is very
  similar to the proof of Proposition 3.2.4.22 in \cite{twisted-moduli}.  By cohomology
  and base change, for sufficiently large $m$ the vector spaces
  $\ext^1(\mf Q_t(-m),\ms V_1)$ form a vector bundle $\V$ on $T$ such
  that there is a universal extension $$0\to(\ms V_1)_T\to\ms V\to\mf
  Q(-m)\to 0$$ over $\ms X\times T$.  Let $\V^\circ\subset \V$ be the
  open subset over which $\ms V$ has unobstructed torsion free fibers.
  For each $i=1,2$, choosing a general section of $\ms O(-m)$ and
  forming the pullback
$$\xymatrix{0\ar[r]&\ms V_1\ar[r]\ar[d]&\ms V_i(N)'\ar[d]\ar[r]&\ms Q_i(-m)\ar[r]\ar[d]& 0\\
  0\ar[r]&\ms V_1\ar[r]&\ms V_i(N)\ar[r]&\ms Q_i(-m)\ar[r]& 0}$$
yields a subsheaf $\ms V_i(N)'$ of $\ms V_i(N)$ such that the quotient
$\ms V_i(N)/\ms V_i(N)'$ is the pushforward of an invertible twisted
sheaf supported on finitely many closed residual gerbes of $\ms
C\setminus\ms D$.  Thus, the sheaf $\ms V(-N)$ contains two fibers
over $\V^\circ$ parametrizing the finite colength subsheaves $\ms
V_i(N)'(-N)\subset\ms V_i$, as desired.  This completes the proof of
Proposition \ref{P:contracting}.
\end{proof}

\appendix
\section{A Bertini theorem}
In this appendix we record a simple Bertini type result for general
maps between sheaves on stacks of the kind encountered in this paper. We will
study when a general map of the form $V\to W(N)$ has a nice cokernel
(one that is invertible or an invertible sheaf supported on a divisor). The main
restriction that is not apparent in the classical theorems is the condition that
the sheaves $V$ and $W$ must be locally isomorphic everywhere, so that
the local maps between them are not forced to vanish somewhere by pure
representation theory.

We retain the notation from Sections \ref{sec:intro} through \ref{sec:gerbe}, so $\ms X$ is a $\m_\ell$-gerbe on a stack that arises from applying the root construction to a surface $X$ along components of an snc divisor $D$. Fix an ample divisor $H$ on $X$. Let $V$ and $W$ be torsion free regular $\ms X$-twisted sheaves of rank $\ell^2$ that are everywhere Zariski-locally isomorphic. More general statements are undoubtedly true, but our goal is not to maximize generality at the expense of utility.

  \begin{prop}\label{L:cok}
    For sufficiently large $N$, the cokernel $\ms Q$ of a general map $$V\to W(N)$$ is an invertible $\ms X$-twisted sheaf
    supported on the preimage of a smooth curve $C$ in $X$ in the linear
    system $$|rNH + \det(W) - \det(V)|$$ meeting $D$ transversely.
  \end{prop}
  \begin{proof}
    This is a standard Bertini-type statement, but there is no
    reference to handle the present stacky context.

    Choose $N$ large enough that the following restriction maps are surjective:
    \begin{enumerate}
    \item $\hom_{\ms X}(V,W(N))\to\hom_Z(V|_{Z},W(N)|_Z)$ is
    surjective for every closed substack $Z\subset\ms X$ of the form
    $\spec\ms O_{\ms X}/\ms I_{\xi}^3$, where $\xi\subset\ms X$ is a
    closed residual gerbe;
  \item $\hom_{\ms X}(V,W(N))\to\hom_{\ms D}(V|_{\ms
      D},W(N)|_{\ms D})$.
    \end{enumerate}
   Let $A$ denote the affine space whose
    $k$-points are $\hom_{\ms X}(V,W(N))$ and let
    $\Phi:V|_{\ms X\times A}\to W(N)|_{\ms X\times A}$ be
    the universal map; call the cokernel $\ms N$.  The right-exactness of
    base change and the usual openness results show that there is an
    open subscheme $A^\circ\subset A$ over which $\ms N$ is an
    invertible sheaf over a smooth $A^\circ$-stack.  Our goal is to
    show that $A^\circ$ is non-empty.

    Let $\ms Y\subset\ms X\times A$ denote the open locus over which
    $\ms N$ has geometric fibers of dimension at most $1$ and smooth
    support.  The complement of $\ms Y$ is a closed cone over $\ms X$,
    and we will show that it has codimension at least $3$ in every
    fiber over a closed residual gerbe $\xi$ of $\ms X$ distinct from
    the singular gerbes of $\ms D$.  Since $\ms X$ has dimension $2$,
    this shows that the complement of $\ms Y$ cannot dominate $A$.

    Since $\hom(V,W(N))\to\hom(V|_{Z_\xi}, W(N)|_{Z_\xi})$ is surjective, it suffices to prove the
    statement for the latter, so that we can trivialize the gerbe $\ms
    X$ and thus view $V$ and $W$ as either sheaves over
    $k[x,y]/(x,y)^2$ or as representations of $\m_\ell$ over
    $k[x,y]/(x,y)^2$.  Since $V$ and $W$ are regular, in
    the latter case we have that $V$ and $W$ are both
    $\ell$ times the regular representation.  In either case (and after passing to eigensheaves if necessary), it
    suffices to prove the following.

    \begin{claim}
      Given a free module of rank $n\geq 2$ over $R:=k[x,y]/(x,y)^2$, the
      locus of maps $f\in\M_n(R)$ such that $\det f=0$ or $\dim\coker
      f\tensor k>1$ has $k$-codimension at least $3$ in $\M_n(R)$ (viewed as a
      $k$-vector space).
    \end{claim}

    To see that this suffices, note that if $f:V\to W(N)$
    is a map which avoids the cone of the claim at every point of $\ms
    X$ then $\coker f$ is a sheaf supported on a smooth curve $C$ such
    that for every closed residual gerbe the geometric fiber of
    $\coker f$ has dimension $1$.  It follows from Nakayama's lemma
    that $\coker f$ is an invertible sheaf on $C$.

    \begin{proof}[Proof of Claim]
      Write an element of $\M_n(R)$ as $A=A_0+xA_1+yA_2$.  It is
      well-known that the locus of matrices $A_0$ of rank at most
      $n-2$ has codimension $3$ in $\M_n(k)$ (see, e.g., Lemma
      8.1.9(ii) of \cite{artin-dejong}), settling the second condition.  

      For the first, recall the Jacobi formula
$$\det A=\det A_0+\Tr(\adj(A_0)(xA_1+yA_2)).$$
If $\det A_0=0$ but $A_0\neq 0$, then the condition $\det A=0$ has
codimension $3$, as the vanishing of
$\Tr(\adj(A_0)A_1)$ and $\Tr(\adj(A_0)A_2)$ are independent
conditions.  On the other hand, $A_0=0$ is a codimension at least $3$ condition
as $n\geq 2$.
\end{proof}
As a consequence of the claim, we see that the locus of sections
$Y\subset A$ parametrizing maps $V\to W(N)$ whose cokernel
is not an invertible twisted sheaf supported on a smooth curve is a
proper subvariety of $A$.  Applying the same argument to $\ms D$ shows
that a general point of $A$ parametrizes a map whose cokernel has
support intersecting $\ms D$ transversely, as desired.
  \end{proof}

\section*{Acknowledgments}
\label{sec:acknowledgments}

During the course of this work, the author had helpful conversations
with Dan Abramovich, Jean-Louis Colliot-Th\'el\`ene, Brian Conrad, Aise Johan de
Jong, J\'anos Koll\'ar, Davesh Maulik, Martin Olsson, and Jason Starr.

\end{document}